\newtheorem{theorem}{Theorem}[section]
\newtheorem{proposition}[theorem]{Proposition}
\newtheorem{lemma}[theorem]{Lemma}
\theoremstyle{definition}
\newtheorem{definition}[theorem]{Definition}
\newtheorem{example}[theorem]{Example}
\newtheorem{exercise}[theorem]{Exercise}
\newtheorem{remark}[theorem]{Remark}
\DeclareMathOperator\coker{coker}
\title{Applied Sheaf Theory For Multi-agent Artificial Intelligence (Reinforcement Learning) Systems: A Prospectus}
\author{Eric Schmid}
\date{April 2025}
\begin{document}

\maketitle

\tableofcontents

\chapter{Sheaf Theory and Sheaf Cohomology}
\section{Introduction: Local-to-Global Principles and Motivation}\label{sec:intro}
Many problems in mathematics exhibit a tension between \textbf{local} behavior and \textbf{global} behavior on a space. \emph{Sheaf theory} provides a systematic framework to track data locally on a topological space and determine how to ``glue'' or assemble this local data into global structures. This local-to-global perspective is pervasive in areas such as algebraic geometry, topology, differential geometry, and even logic and computer science. The guiding question is: \emph{when can local solutions or constructions be uniquely merged into a global one?} \cite{GallierQuaintance2024}

\subsection{The Local-to-Global Phenomenon.}
We illustrate the local vs.\ global distinction  with a few examples:
\begin{itemize}
    \item \textbf{Topology:} A continuous function can be defined locally on each set of an open cover of a space, but these local definitions might not agree globally. For instance, a manifold can be covered by coordinate patches (local coordinate functions exist), yet there may be no single global coordinate chart covering the entire manifold.
    \item \textbf{Algebraic Geometry:} Regular functions on an algebraic variety are defined locally on affine patches. Whether these local functions extend to a global regular function is a central question. In fact, one defines varieties by gluing together affine algebraic pieces, and not every local regular function extends globally.
    \item \textbf{Differential Geometry:} One can define vector fields or differential forms on local neighborhoods of a manifold, but there may be global obstructions. A classic example: a nowhere-vanishing local section of the tangent bundle exists in small patches, but on a non-orientable manifold it cannot be extended globally (this relates to the non-existence of a global nowhere-vanishing vector field, an obstruction measured by the orientability).
    \item \textbf{Gauge Theory (Physics):} Locally one can choose a gauge (a function describing a field, like the electromagnetic potential in each patch), but globally there might be a mismatch leading to nontrivial topological classes (e.g. a magnetic monopole yields a potential defined locally but not globally continuous, reflecting a non-zero second cohomology class).
\end{itemize}
In each case, the question is whether locally defined data (continuous functions, sections of a bundle, etc.) that agree on overlaps can be ``glued'' to a valid object on the union of the patches. Sheaf theory formalizes this situation by assigning data to every open set and imposing conditions that ensure consistency and gluability of that data. The advantage of the sheaf perspective is that it provides a language and tools (notably sheaf cohomology) to measure the \emph{obstructions} to gluing local data into global data.

\subsection{From Functions to Sheaves.}
A naive approach to a local-to-global problem is to consider the collection of all functions (or sections, etc.) defined on each open subset of a space. This leads to the notion of a \emph{presheaf}, which associates to each open $U$ some set (or algebraic structure) $F(U)$ of data on $U$, together with restriction maps to smaller opens. However, not every presheaf allows gluing of local sections; additional axioms are needed. A \emph{sheaf} is a presheaf that satisfies these axioms ensuring uniqueness and existence of glued sections. We will formalize these definitions in the next sections.

The power of sheaf theory comes not only from organizing data but also from sheaf cohomology, a machinery that measures the failure of the local-to-global principle. Roughly, if local data patch together perfectly, certain cohomology groups vanish; when there are obstructions, those appear as nonzero cohomology classes. Sheaf cohomology has become fundamental in modern algebraic geometry, providing powerful generalizations of classical results (such as the Riemann–Roch theorem, which was originally proved in the 19th century using purely classical methods) and many classification results. It establishes deep connections between topology, analysis, and algebraic geometry.

In this paper, we assume familiarity with basic category theory (categories, functors, limits/colimits) and basic algebraic topology (e.g. the notion of a chain complex and homology), but no prior knowledge of sheaf theory or homological algebra. We proceed from the definition of presheaves and sheaves, through examples and foundational constructions (morphisms, stalks, sheafification), and then focus on the theory of sheaf cohomology. Along the way, we include examples and exercises to illustrate key points. We use standard notation and conventions from algebraic topology and algebraic geometry texts.

\section{Presheaves}\label{sec}
In essence, a \textbf{presheaf} is a rule that assigns to each open set of a topological space some data (such as functions or other mathematical objects defined on that open set), with the requirement that these data are consistent under restriction to smaller opens. The elements of this data set are called \textbf{sections}, which intuitively can be thought of as functions or mathematical objects that are defined over a specific open set. Presheaves formalize the idea of ``local data structure on a space''. We give the formal definition and then illustrate with examples.

\begin{definition}[Presheaf]\label{def:presheaf}
Let $X$ be a topological space. A \emph{presheaf} $F$ on $X$ consists of the following data:
\begin{itemize}
    \item For each open set $U \subseteq X$, an object $F(U)$ in some category (such as sets, abelian groups, rings, or modules), whose elements are called \textbf{sections of $F$ over $U$}. We often denote a section $s \in F(U)$ by $s|_U$ when we wish to emphasize its domain.
    \item For each inclusion of open sets $V \subseteq U \subseteq X$, a \textbf{restriction map}
    \[
      \rho^F_{U,V}: F(U) \to F(V),
    \] 
    which is a morphism in the relevant category, often written as $s \mapsto s|_V$ for $s \in F(U)$, subject to the following axioms:
    \begin{itemize}
        \item \textbf{Identity:} For any open set $U$, the restriction to itself is the identity: if $id: U \hookrightarrow U$ is the identity inclusion, then $\rho^F_{U,U} = \mathrm{id}_{F(U)}$ (so every section restricts to itself on the same set).
        \item \textbf{Transitivity:} If $W \subseteq V \subseteq U$ are open sets, then restrictions compose as expected: $\rho^F_{V,W} \circ \rho^F_{U,V} = \rho^F_{U,W}$. In other words, restricting a section from $U$ to $W$ directly is the same as first restricting to $V$ then to $W$.
    \end{itemize}
\end{itemize}
The pair $(F,\rho^F)$ (often just $F$) is called a presheaf on $X$. Common examples include presheaves of abelian groups, rings, or modules, where each $F(U)$ has the corresponding algebraic structure and each restriction map preserves this structure. We often write $F: U \mapsto F(U)$, $U$ open in $X$. \cite{Rotman2009}
\end{definition}

Equivalently, for those familiar with category theory, a presheaf of sets on $X$ is the same as a contravariant functor from the category of open sets of $X$ (with inclusions as morphisms) to the category of sets. \cite{Rotman2009} In this language, $F$ assigns each open $U$ an object $F(U)$ and each inclusion $V \subseteq U$ a morphism $F(U) \to F(V)$ (the restriction), satisfying the functoriality conditions (identity and composition) automatically.

\begin{example}\label{ex:presheaf_examples}
We list some fundamental examples of presheaves on a topological space $X$:
\begin{enumerate}
    \item \textbf{Continuous Functions:} For a fixed target space (e.g. $\mathbb{R}$ or $\mathbb{C}$), let $F(U) = C^0(U)$ be the set of all continuous functions on $U$. For $V \subseteq U$, define $\rho_{U,V}(f) = f|_V$ by restriction of the domain. This clearly satisfies the identity and transitivity axioms, so $U \mapsto C^0(U)$ is a presheaf on $X$. Similarly, one can define presheaves $C^k(U)$ of $C^k$ (continuously $k$-times differentiable) functions on $U$, or $C^\infty(U)$ of smooth functions if $X$ has a differentiable manifold structure. \cite{Vakil2024}
    
    \item \textbf{Constant Presheaf:} Fix a set $S$. Define $F(U) = S$ for every open $U$, and let each restriction map $\rho_{U,V}: S \to S$ be the identity on $S$. This presheaf $F$ simply assigns the same set $S$ to every open set (intuitively pretending that a ``section'' over $U$ is just an element of $S$, with no further dependence on $U$). We call this the \emph{constant presheaf with fiber $S$}. It vacuously satisfies the presheaf axioms (identity and transitivity are trivial here). \emph{Warning:} This $F$ will not in general be a sheaf unless $X$ is connected (we will revisit this).
    
\item \textbf{Sections of a Bundle:} Let $\pi: E \to X$ be a (continuous) fiber bundle or more generally any map of spaces. Define a presheaf $F$ by $F(U) = \{\text{(continuous) sections } s: U \to E \text{ of }\pi\}$, i.e. maps $s$ such that $\pi \circ s = \mathrm{id}_U$. If $V \subseteq U$, restriction is given by $\rho_{U,V}(s) = s|_V$, the section obtained by restricting $s$'s domain. This is a presheaf: identity is clear, and if $W \subset V \subset U$, then $(s|_V)|_W = s|_W$. For instance, if $E$ is a topological vector bundle on $X$, $F(U)$ is the space of continuous (or smooth, etc.) sections of the bundle over $U$. In an important special case, $\pi$ could be the trivial bundle $X \times A \to X$ with fiber $A$ an abelian group; then sections on $U$ correspond to arbitrary continuous functions $U \to A$, and $F$ is the presheaf of $A$-valued functions on $X$. The special case of locally constant functions arises when $A$ has the discrete topology.
    
    \item \textbf{Open-set Sheaf:} Define $F(U) = \{ V \mid V \text{ is an open subset of } U\}$, the power set of $U$ restricted to opens. If $V \subseteq U$ are open, let $\rho_{U,V}: F(U) \to F(V)$ be given by intersection: for $W \in F(U)$ (so $W$ is open in $U$), $\rho_{U,V}(W) = W \cap V$, which is an open subset of $V$ and hence in $F(V)$. One checks easily that this satisfies the presheaf axioms. Moreover, it is actually a sheaf: if $\{U_i\}$ is an open cover of $U$ and we have a collection of open sets $W_i \in F(U_i)$ that agree on overlaps (i.e., $W_i \cap U_j = W_j \cap U_i$ for all $i,j$), then these glue uniquely to an open set $W = \bigcup_i W_i \in F(U)$ where $W \cap U_i = W_i$ for all $i$. This can be seen as a special case of example (1) where the target space is the Sierpinski space. Intuitively, this sheaf assigns to each open $U$ the collection of all its open subsets, providing a way to encode the topology of $X$ within the sheaf framework.
\end{enumerate}
\end{example}

Presheaves are very flexible, but the lack of additional conditions means they might not reflect the desired local-to-global properties. In particular, a presheaf does not require that sections which locally agree must be identical, nor that locally defined sections can always be glued into a global section. These crucial properties are imposed in the definition of a \emph{sheaf}, which we introduce next. The examples above illustrate typical presheaves. Before defining sheaves, we formalize how a presheaf can fail the ``local consistency'' requirements:

\begin{definition}[Morphisms of Presheaves]\label{def:presheaf_morphism}
If $F$ and $G$ are presheaves on $X$, a \emph{morphism of presheaves} $\phi: F \to G$ is a collection of maps $\{\phi_U: F(U) \to G(U)\}_{U \subseteq X,\,U \text{ open}}$ such that for every inclusion $V \subseteq U$, the following diagram commutes:
\[
\begin{CD}
F(U) @>\phi_U>> G(U)\\
@V\rho^F_{U,V}VV @VV\rho^G_{U,V}V\\
F(V) @>\phi_V>> G(V)\,,
\end{CD}
\] 
i.e. $\phi_V(s|_V) = (\phi_U(s))|_V$ for every $s \in F(U)$. This is just a natural transformation of functors in the categorical view. Morphisms of presheaves compose in the obvious way and one obtains the \emph{category of presheaves on $X$}. If each $\phi_U$ is bijective, we call $\phi$ an isomorphism of presheaves (and $F, G$ are isomorphic).
\end{definition}

Morphisms of presheaves allow us to compare different presheaf structures. In particular, any sheaf (to be defined) is in particular a presheaf, so these notions apply to sheaves as well. The category of sheaves on $X$ will be a full subcategory of the category of presheaves.

\section{Sheaves}\label{sec:sheaf}
A \textbf{sheaf} is a presheaf that satisfies the additional local consistency axioms: (1) if two sections agree locally, they must coincide globally (local identity, or separation axiom), and (2) any collection of locally compatible sections arises as the restriction of a unique global section (gluing, or existence axiom). These conditions ensure that the presheaf actually captures the idea of ``local data that determine a unique global outcome''. More formally:

\begin{definition}[Sheaf]\label{def:sheaf}
A presheaf $F$ on $X$ (of sets) is said to be a \emph{sheaf} if it satisfies the following \textbf{sheaf axioms} in addition to the presheaf axioms:
\begin{itemize}
    \item \textbf{Locality (Uniqueness/Separation):} If $\{U_i\}_{i \in I}$ is an open cover of an open set $U \subseteq X$, and if $s, t \in F(U)$ are two sections such that $s|_{U_i} = t|_{U_i}$ for \emph{all} $i\in I$, then $s = t$ in $F(U)$. In other words, a section is uniquely determined by its values on the members of an open cover.  Equivalently, if a section restricts to the zero (or identity) section on each piece of a cover, it must be zero; no nontrivial section can vanish on all pieces of a cover.
    \item \textbf{Gluing (Existence):} If $\{U_i\}_{i \in I}$ is an open cover of $U$ and we have a collection of sections $s_i \in F(U_i)$ for each $i$, which are \emph{locally compatible} in the sense that on every nonempty overlap $U_i \cap U_j$, the restrictions agree: $s_i|_{U_i \cap U_j} = s_j|_{U_i \cap U_j}$ for all $i,j$, then there exists at least one section $s \in F(U)$ (a \emph{global section} on $U$) such that $s|_{U_i} = s_i$ for all $i \in I$. This $s$ is said to \emph{glue} or \emph{extend} the collection $\{s_i\}$. \cite{Jagathese2023}
\end{itemize}
If $F$ is a sheaf, we often call its sections simply ``sections'' without always mentioning the open set (context usually clarifies the domain). A \textbf{sheaf of abelian groups (rings, etc.)} is defined similarly (the data are abelian groups, and restriction maps are homomorphisms), and the sheaf axioms above are imposed on the underlying sets of sections. 
\end{definition}

The sheaf axioms are often summarized by saying that for any open cover $\{U_i\}$ of $U$, the sequence 
\[ 0 \to F(U) \xrightarrow{\;\;\delta\;\;} \prod_i F(U_i) \rightrightarrows \prod_{i,j} F(U_i \cap U_j) \] 
is \emph{exact}. Let's explain this sequence in detail:

\begin{itemize}
\item The first map $F(U) \to \prod_i F(U_i)$ sends a global section $s \in F(U)$ to the collection of its restrictions $(s|_{U_i})_i$ to each open set in the cover.

\item The double arrow $\prod_i F(U_i) \rightrightarrows \prod_{i,j} F(U_i \cap U_j)$ represents two different restriction maps:
  \begin{itemize}
    \item The first restricts each $s_i \in F(U_i)$ to $s_i|_{U_i \cap U_j} \in F(U_i \cap U_j)$
    \item The second restricts each $s_j \in F(U_j)$ to $s_j|_{U_i \cap U_j} \in F(U_i \cap U_j)$
  \end{itemize}
\end{itemize}

``Exactness'' here means that the kernel of the second arrow equals the image of the first. In other words, a collection of sections $(s_i)_i \in \prod_i F(U_i)$ comes from a global section in $F(U)$ if and only if they agree on all overlaps: $s_i|_{U_i \cap U_j} = s_j|_{U_i \cap U_j}$ for all $i,j$.

This exactness encodes both sheaf axioms:
\begin{itemize}
\item \textbf{Locality axiom} (injectivity of $F(U) \to \prod_i F(U_i)$): If two global sections $s, t \in F(U)$ agree on each $U_i$ (i.e., $s|_{U_i} = t|_{U_i}$ for all $i$), then they must be identical ($s = t$). Equivalently, if $s \neq t$, there must exist some $U_i$ where $s|_{U_i} \neq t|_{U_i}$.

\item \textbf{Gluing axiom} (surjectivity onto the equalizer): If a collection of sections $s_i \in F(U_i)$ satisfies the compatibility condition $s_i|_{U_i \cap U_j} = s_j|_{U_i \cap U_j}$ for all $i,j$, then there exists a global section $s \in F(U)$ that restricts correctly to each $U_i$ (i.e., $s|_{U_i} = s_i$ for all $i$).
\end{itemize}

This categorical interpretation reveals how sheaves enforce the principle that compatible local data uniquely determine global data. In practice, we will mainly work with the concrete formulation given earlier.

A presheaf that satisfies only the Locality axiom is sometimes called a \emph{separated presheaf} (or \emph{pre-sheaf} in older literature), but the term and distinction won't be needed much here; we primarily care about full sheaves.

\begin{example}\label{ex:sheaf_examples}
Revisiting the presheaves of Example \ref{ex:presheaf_examples}, we identify which are sheaves:
\begin{enumerate}
    \item \textbf{Sheaf of Continuous Functions:} $U \mapsto C^0(U)$ is actually a sheaf. The locality axiom holds because if two continuous functions agree on each piece of an open cover, they agree everywhere on the union (since points of $U$ lie in some piece and the functions agree there). More formally, if $f, g \in C^0(U)$ with $f|_{U_i}=g|_{U_i}$ for all $i$, then for each $x\in U$ pick some $U_i$ containing $x$, and $f(x)=g(x)$ since $x\in U_i$. Thus $f=g$ globally. The gluing axiom also holds: given continuous $f_i \in C^0(U_i)$ that agree on overlaps $U_i\cap U_j$, one can construct a continuous function $f$ on $\bigcup_i U_i = U$ by defining $f(x) = f_i(x)$ if $x \in U_i$. This is well-defined because if $x\in U_i \cap U_j$, $f_i(x)=f_j(x)$ by hypothesis. To see $f$ is continuous, note that each $x$ has a neighborhood where $f$ equals one of the $f_i$, so $f$ is locally continuous and hence continuous on $U$. Moreover $f$ restricts to each $f_i$. Uniqueness is ensured by the locality axiom. Thus $C^0_X$, the presheaf of continuous real-valued functions on $X$, is a sheaf. By similar reasoning, $C^k_X$ and $C^\infty_X$ (on a differentiable manifold $X$) are sheaves of functions.
    
\item \textbf{Sheaf of Differentiable Functions:} As above, $F(U) = C^\infty(U)$ is a sheaf of smooth functions (on a smooth manifold $X$). The sheaf axioms hold because smoothness is a local property. For the gluing axiom, if we have locally smooth functions $f_i \in C^\infty(U_i)$ that agree on overlaps $U_i \cap U_j$, they produce a well-defined function $f$ on $U = \cup_i U_i$. This function $f$ is automatically smooth because smoothness is characterized by local behavior: a function is smooth if and only if it is smooth in a neighborhood of every point. Since every point $x \in U$ is contained in some $U_i$ where $f$ restricts to the smooth function $f_i$, the glued function $f$ inherits smoothness naturally. The locality axiom (uniqueness) is clear: if two smooth functions agree locally everywhere, they are identical.
    
    \item \textbf{Sections of a Vector Bundle:} For a continuous (or smooth) vector bundle $E \to X$, the presheaf $U \mapsto \{\text{sections of }E \text{ on }U\}$ is a sheaf. The sheaf axioms are usually part of the definition of a fiber bundle: local sections that agree on overlaps give a unique global section because locally (with respect to a trivializing cover) one can patch the sections using a partition of unity or directly by definition of bundles. Thus, the sheaf of sections of $E$ is often denoted $\Gamma(X,E)$ for global sections, and $\Gamma(U,E)$ for sections over $U$. If $E=X\times A$ is a trivial bundle with fiber an abelian group $A$, then $\Gamma(U,E)$ are just $A$-valued continuous functions on $U$. If $A$ is discrete (or $U$ is connected), these are precisely the locally constant functions. We denote by $\underline{A}_X$ the sheaf of locally constant $A$-valued functions on $X$, called the \emph{constant sheaf (with stalk $A$)}.
    
    \item \textbf{Sheaf of Holomorphic Functions:} In complex analysis or on a complex manifold $X$, the assignment $U \mapsto \mathcal{O}_X(U)$, where $\mathcal{O}_X(U)$ is the ring of holomorphic (complex-analytic) functions on $U$, defines a sheaf of rings on $X$. The gluing axiom in this case is a nontrivial theorem of complex analysis: holomorphic functions that agree on overlaps extend to a holomorphic function on the union (this follows from the identity theorem and analytic continuation). Thus $\mathcal{O}_X$ is a sheaf (in fact, a sheaf of rings, and even of $\mathbb{C}$-algebras) on $X$. This sheaf is fundamental in complex geometry; for example, its higher cohomology groups yield important invariants of $X$.
    \item \textbf{Constant Presheaf (revisited):} The presheaf $F$ with $F(U)=S$ for all $U$ (from Example \ref{ex:presheaf_examples}(2)) is \emph{not} a sheaf in general. The issue is the gluing axiom: suppose $X$ is the union of two disjoint nonempty open sets $U_1, U_2$ (so $X$ is disconnected). Take sections $s_1 \in S = F(U_1)$ and $s_2 \in S = F(U_2)$ that are different elements of $S$. On the overlap $U_1 \cap U_2 = \emptyset$, the compatibility condition is vacuously true (any condition on the empty set is trivially satisfied), so $\{s_1, s_2\}$ is a locally compatible family on the cover $\{U_1,U_2\}$. The gluing axiom would demand a section $s \in F(X)$ such that $s|_{U_i}=s_i$. But $F(X) = S$ only has one element if the sheaf property held (because on $X$ itself $s$ would equal $s_1$ on $U_1$ and $s_2$ on $U_2$, forcing $s_1 = s_2$ if $s$ existed, which is not the case by our choice). Thus no single element of $S$ can restrict to give two different values on $U_1$ and $U_2$. Therefore, the constant presheaf fails the gluing axiom (unless $S$ has only one element, or $X$ is connected so that any locally constant choice is globally constant). In fact, the \emph{sheafification} of this presheaf is exactly the sheaf of locally constant $S$-valued functions $\underline{S}_X$, whose sections on $U$ are those functions $f:U \to S$ which locally are constant (these can glue because on overlaps they agree automatically if defined to be locally constant). \cite{GallierQuaintance2024}
    
\item \textbf{Discontinuous Functions Presheaf:} The presheaf 

\[
P(U) = \{\text{all (not necessarily continuous) functions }U \to \mathbb{R}\}
\]

with ordinary restriction maps is actually a sheaf.

To see why, recall that for $P$ to be a sheaf, it must satisfy both the locality (uniqueness) and gluing axioms. Consider an open cover $\{U_i\}$ of some open set $U$. If we have sections $f_i \in P(U_i)$ that agree on all overlaps $U_i \cap U_j$, then we can define a function $f$ on $U$ by setting $f(x) = f_i(x)$ whenever $x \in U_i$. This is well-defined precisely because the sections agree on overlaps. This function $f$ is an element of $P(U)$ since $P$ allows all functions, not just continuous ones, and $f$ restricts to each $f_i$ on $U_i$. 

The locality axiom is also satisfied: if $f, g \in P(U)$ restrict to the same function on each $U_i$ of an open cover, then $f$ and $g$ must be identical on $U$, as they agree at each point.

This example relates to example (1) in that it can be viewed as the sheaf of continuous functions where $\mathbb{R}$ is given the indiscrete topology, which makes every function continuous by definition.

\end{enumerate}
\end{example}

Intuitively, a sheaf formalizes the principle that \emph{compatible local data determine a unique global object}. 

Just as we defined morphisms of presheaves, we have:

\begin{definition}[Morphisms of Sheaves]\label{def:sheaf_morphism}
A \emph{morphism of sheaves} $\varphi: \mathcal{F} \to \mathcal{G}$ on $X$ is simply a morphism of the underlying presheaves (Definition \ref{def:presheaf_morphism}). In other words, a family of maps $\{\varphi_U: \mathcal{F}(U) \to \mathcal{G}(U)\}_{U\subseteq X}$ commuting with all restriction maps. Sheaves on $X$ and their morphisms form the \textbf{category of sheaves} on $X$, denoted $\mathbf{Sh}(X)$. 

A morphism $\varphi: \mathcal{F}\to\mathcal{G}$ is an \emph{isomorphism} (or sheaf isomorphism) if each $\varphi_U$ is bijective. In that case $\mathcal{F}$ and $\mathcal{G}$ are essentially the same sheaf.
\end{definition}

\begin{example}
Basic examples of sheaf morphisms include inclusion maps or restriction of structure: e.g. the inclusion of the sheaf of differentiable functions into the sheaf of continuous functions $C^\infty_X \hookrightarrow C^0_X$ (each smooth function is in particular continuous) is a morphism of sheaves. Another example is differentiation: on a smooth manifold $X$, the assignment $d: C^\infty_X(U) \to \Omega^1_X(U)$ given by $f \mapsto df$ (the exterior derivative) defines a morphism of sheaves from the sheaf of smooth functions to the sheaf of 1-forms, since $d(f|_V) = (df)|_V$ for all $V\subseteq U$ (differentiation commutes with restriction).
\end{example}

\begin{definition}[Exact Sequences of Sheaves]\label{def:exact_sequence}
Given a sequence of sheaf morphisms $0 \to \mathcal{F}' \xrightarrow{\;\alpha\;} \mathcal{F} \xrightarrow{\;\beta\;} \mathcal{G} \to 0$, we say it is \emph{exact} if:
\begin{itemize}
\item $\alpha$ is a monomorphism of sheaves (i.e., $\alpha_U$ is injective for all open sets $U$)
\item $\beta$ is an epimorphism of sheaves (which is a weaker condition than being surjective on all sets of sections; it only requires $\beta$ to be locally surjective)
\item The image of $\alpha$ equals the kernel of $\beta$ (i.e., $\mathcal{F}'$ is identified as the kernel subsheaf of $\beta$)
\end{itemize}
\end{definition}

Exact sequences of sheaves are fundamental when we define cohomology, since applying the global section functor $\Gamma(X,-)$ to an exact sequence of sheaves generally produces a \emph{left exact} sequence of groups, and the right-end failure of exactness leads to cohomology groups (see \S\ref{sec:cohomology_intro}). We note here that the category of sheaves of abelian groups on $X$ is an \textbf{abelian category}: it has a zero object (the sheaf $\mathbf{0}$ with $\mathbf{0}(U)=\{0\}$ for all $U$), all kernels and cokernels exist, and all monomorphisms and epimorphisms are normal. In fact, one can show that if $\mathcal{F}$ and $\mathcal{G}$ are sheaves of abelian groups, then $\ker(\beta)$ defined as the presheaf $U \mapsto \ker(\beta_U: \mathcal{F}(U) \to \mathcal{G}(U))$ is actually a sheaf (since the sheaf axioms can be checked to hold for kernels, being a sub-presheaf of $\mathcal{F}$). However, $\mathrm{coker}(\alpha)$ defined by $U \mapsto \coker(\alpha_U)$ is generally not a sheaf but only a presheaf; to get the cokernel in the category of sheaves, one must sheafify this presheaf. Thus $\mathbf{Sh}(X, \mathbf{Ab})$ (the category of sheaves of abelian groups on $X$) is abelian. Moreover, $\mathbf{Sh}(X, \mathbf{Ab})$ has \emph{enough injectives} (we will discuss this in \S\ref{sec:derived_functors}), meaning any sheaf can be embedded in an injective sheaf. These facts allow us to do homological algebra with sheaves.

\section{Stalks and the Étale Space}\label{sec:stalks}
A key feature of sheaves is their ability to encode both local and global information seamlessly. The notion of a \textbf{stalk} of a sheaf formalizes the idea of ``germs of sections'' at a point -- capturing the infinitesimal or very local behavior of sections. By examining stalks, we can often reduce questions about sheaves to simpler questions about these local germs.

\begin{definition}[Stalk and Germ]\label{def:stalk}
Let $\mathcal{F}$ be a presheaf (or sheaf) on $X$, and let $x \in X$ be a point. The \emph{stalk} of $\mathcal{F}$ at $x$, denoted $\mathcal{F}_x$, is defined as the direct limit (or colimit) of the sets $F(U)$ as $U$ ranges over all open neighborhoods of $x$:
\[
\mathcal{F}_x := \varinjlim_{U \ni x} \mathcal{F}(U).
\]
Concretely, an element of $\mathcal{F}_x$ is an equivalence class of pairs $(U, s)$ where $U$ is an open neighborhood of $x$ and $s \in \mathcal{F}(U)$ is a section defined on $U$. \cite{Vakil2024} Two pairs $(U,s)$ and $(V,t)$ are considered equivalent if there exists an open neighborhood $W \subseteq U \cap V$ of $x$ such that $s|_W = t|_W$ in $\mathcal{F}(W)$. An equivalence class of such pairs is called a \emph{germ} of a section at $x$, often denoted by $[s]_x$ or simply $s_x$. We write $\mathrm{germ}_x(s) \in \mathcal{F}_x$ for the germ of $s$ at $x$. There are natural \emph{projection maps} $\rho_{U,x}: \mathcal{F}(U) \to \mathcal{F}_x$ sending $s \mapsto [s]_x$. By construction, if $x \in V \subseteq U$, then $\rho_{V,x}(s|_V) = \rho_{U,x}(s)$, ensuring consistency.
\end{definition}

The stalk $\mathcal{F}_x$ can be thought of as ``the set of all values that sections of $\mathcal{F}$ take at $x$, up to identifying two sections that agree in some neighborhood of $x$''. If $\mathcal{F}$ is a sheaf of, say, abelian groups or rings, then each stalk $\mathcal{F}_x$ naturally inherits the structure of an abelian group or ring (since direct limits preserve such structures): operations are defined on representatives and checked to be well-defined. 

\begin{theorem}[Exactness on Stalks]\label{thm:exactness_stalks}
A sequence of sheaves is exact if and only if it induces an exact sequence on every stalk. That is, a sequence
\[ 0 \to \mathcal{F}' \xrightarrow{\;\alpha\;} \mathcal{F} \xrightarrow{\;\beta\;} \mathcal{G} \to 0 \]
is exact if and only if for every point $x \in X$, the induced sequence on stalks
\[ 0 \to \mathcal{F}'_x \xrightarrow{\;\alpha_x\;} \mathcal{F}_x \xrightarrow{\;\beta_x\;} \mathcal{G}_x \to 0 \]
is exact.
\end{theorem}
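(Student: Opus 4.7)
The plan is to exploit two foundational facts in tandem: first, that the stalk construction $\mathcal{F} \mapsto \mathcal{F}_x$ is a filtered colimit over the directed system of open neighborhoods of $x$, and filtered colimits of abelian groups are exact (they preserve finite limits and commute with colimits); second, that the sheaf axioms, in particular the locality axiom, allow us to detect when a section is zero and when two subsheaves agree by checking stalkwise. A useful auxiliary fact I would invoke or quickly verify is that sheafification does not alter stalks, so the subtle distinction between the presheaf image $U \mapsto \alpha_U(\mathcal{F}'(U))$ and the sheaf image $\mathrm{im}(\alpha)$ disappears at the stalk level.

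For the forward direction, assume the sheaf sequence is exact. After recording that any sheaf morphism $\phi$ induces a well-defined $\phi_x([s]_x) := [\phi_U(s)]_x$, I would argue three points. Injectivity of $\alpha_x$ follows because $\alpha_U$ is injective for every $U$ and filtered colimits preserve injections. Middle exactness follows because $\mathrm{im}(\alpha)$ and $\ker(\beta)$ coincide as subsheaves of $\mathcal{F}$ by hypothesis, and stalks commute with both kernels and images (using exactness of filtered colimits together with stalk-invariance of sheafification). Surjectivity of $\beta_x$ follows from local surjectivity of $\beta$: any germ $[t]_x \in \mathcal{G}_x$ has a representative $t \in \mathcal{G}(V)$, and local surjectivity gives, after shrinking $V$, some $s \in \mathcal{F}(V)$ with $\beta_V(s) = t$, whence $\beta_x([s]_x) = [t]_x$.

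For the reverse direction, assume every stalk sequence is exact; I would verify the three conditions of Definition \ref{def:exact_sequence} in turn. If $\alpha_U(s) = 0$ for some $s \in \mathcal{F}'(U)$, then $\alpha_x([s]_x) = 0$ for every $x \in U$, so $[s]_x = 0$ by stalk-level injectivity, meaning $s$ vanishes on some neighborhood of each point; the locality axiom applied to the resulting open cover forces $s = 0$. Local surjectivity of $\beta$ is immediate from stalk surjectivity: given $t \in \mathcal{G}(U)$ and $x \in U$, a germ $\sigma \in \mathcal{F}_x$ with $\beta_x(\sigma) = [t]_x$ is represented by some $s \in \mathcal{F}(V)$, and after shrinking $V$ we may assume $\beta_V(s) = t|_V$. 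Finally, $\mathrm{im}(\alpha) = \ker(\beta)$ as subsheaves of $\mathcal{F}$ because they have identical stalks (by hypothesis) and two subsheaves of a sheaf that agree stalkwise are equal, again by the locality axiom.

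The main obstacle is bookkeeping around the notion of image in the sheaf category, and making precise the claim that "stalks commute with kernel, image, and cokernel." Once this interchange lemma is in hand — ultimately a consequence of exactness of filtered colimits in $\mathbf{Ab}$ together with stalk-invariance of sheafification — each of the six small verifications above collapses to a one-line argument. The substantive content of the theorem is thus really the interchange of the stalk functor with the categorical operations, and the role of the locality axiom in lifting stalk-level vanishing and equality back to sections.
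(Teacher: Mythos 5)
Your proposal is correct and follows essentially the same route as the paper's proof: both reduce the theorem to the facts that injectivity, surjectivity, and the identity $\mathrm{im}(\alpha)=\ker(\beta)$ can each be checked stalkwise, via the commutation of the stalk functor with kernels and images. The difference is only one of completeness --- you actually supply the justifications (exactness of filtered colimits in $\mathbf{Ab}$, stalk-invariance of sheafification, and the locality axiom to lift stalk-level vanishing and subsheaf equality back to sections) that the paper's proof merely asserts.
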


\begin{proof}
This follows from the fact that many sheaf properties are \emph{local} and can be checked on stalks. For injectivity, a morphism of sheaves $\varphi: \mathcal{F} \to \mathcal{G}$ is injective (monomorphic) if and only if for every point $x \in X$, the induced map on stalks $\varphi_x: \mathcal{F}_x \to \mathcal{G}_x$ is injective. Similarly, surjectivity of $\varphi$ can be checked stalkwise (this requires that every section of $\mathcal{G}$ has a local preimage in $\mathcal{F}$ around each point). 

For exactness in the middle (i.e., $\text{im}(\alpha) = \ker(\beta)$), we note that taking stalks commutes with taking kernels and images, so $(\ker \beta)_x = \ker(\beta_x)$ and $(\text{im} \alpha)_x = \text{im}(\alpha_x)$. Therefore, $\text{im}(\alpha) = \ker(\beta)$ as sheaves if and only if $\text{im}(\alpha_x) = \ker(\beta_x)$ for all stalks. 

Thus, exactness of the sequence of sheaves is equivalent to exactness at each stalk.
\end{proof}

This principle often simplifies verification of exactness, as stalk calculations are typically more straightforward than checking exactness for all open sets.

The importance of stalks is that many properties of sheaves can be checked by looking at stalks. For instance, as mentioned, a sheaf morphism $\varphi: \mathcal{F}\to\mathcal{G}$ is injective if and only if $\varphi_x: \mathcal{F}_x \to \mathcal{G}_x$ is injective for all $x$. Similarly, surjectivity and exactness can be checked on stalks (this is because sheaf conditions localize statements, and direct limits are exact functors in the category of abelian groups, etc.). In other words, stalks allow one to reduce global questions to local ones around each point.

\begin{example}
For the sheaf $\mathcal{O}_X$ of holomorphic functions on a complex manifold $X$, the stalk $\mathcal{O}_{X,x}$ is the ring of all germs of holomorphic functions at the point $x$. This is exactly the \emph{local ring} of $X$ at $x$, which in algebraic geometry or complex analysis is a fundamental object (e.g., $\mathcal{O}_{X,x}$ might be $\mathbb{C}\{z\}$, the ring of convergent power series, if $X=\mathbb{C}$ and $x=0$). For the constant sheaf $\underline{A}_X$ (locally constant functions with value in $A$), the stalk $\underline{A}_{X,x}$ is just $A$ itself for every $x$, since any germ of a locally constant function is determined by its constant value on some neighborhood of $x$.
\end{example}

The process of passing to stalks is sometimes called \emph{localization} in this context, by analogy with localizing algebraic objects. The slogan is: \emph{Sheaves are determined by their stalks, and maps of sheaves are determined by their effect on stalks.}

\begin{proposition}[Sheaf Equality via Stalks]\label{prop:stalk_equal}
If $\mathcal{F}$ is a sheaf and $s, t \in \mathcal{F}(U)$ are such that $s_x = t_x$ in $\mathcal{F}_x$ for all $x \in U$ (meaning the germs of $s$ and $t$ at every point of $U$ agree), then $s = t$ in $\mathcal{F}(U)$. In other words, a sheaf section is uniquely determined by all its germs.
\end{proposition}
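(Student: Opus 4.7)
The plan is to reduce the statement to the locality (separation) axiom of the sheaf $\mathcal{F}$ by producing, from the stalkwise equality hypothesis, an open cover of $U$ on which $s$ and $t$ literally agree. Concretely, I will translate the statement ``$s_x = t_x$ in $\mathcal{F}_x$'' into a genuinely local equality of sections by unwinding the construction of the stalk as a direct limit (Definition \ref{def:stalk}).

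First, fix an arbitrary point $x \in U$. By hypothesis, $\mathrm{germ}_x(s) = \mathrm{germ}_x(t)$ in $\mathcal{F}_x$. By the definition of the direct limit, two pairs $(U,s)$ and $(U,t)$ represent the same germ at $x$ precisely when there is an open neighborhood $W_x$ of $x$ with $W_x \subseteq U$ such that $s|_{W_x} = t|_{W_x}$ in $\mathcal{F}(W_x)$. I will choose such a $W_x$ for every $x \in U$.

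Next, I will observe that the family $\{W_x\}_{x \in U}$ is an open cover of $U$, since by construction each $x \in U$ lies in its chosen $W_x \subseteq U$. On each member of this cover, $s$ and $t$ restrict to the same section. I can then invoke the locality axiom for the sheaf $\mathcal{F}$ applied to the cover $\{W_x\}_{x \in U}$ of $U$: if two sections of $\mathcal{F}(U)$ restrict to the same element on every open set of some cover of $U$, they are equal. Hence $s = t$ in $\mathcal{F}(U)$.

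There is no genuine obstacle here; the only subtle point is to remember that the $W_x$ must be taken inside $U$ (so that $s|_{W_x}$ and $t|_{W_x}$ make sense and land in the same $\mathcal{F}(W_x)$ where the stalk comparison actually forces literal equality). This step is automatic since the germ equivalence relation can always be witnessed on a neighborhood contained in the intersection of the domains, and both $s$ and $t$ are defined on all of $U$. Everything else is a direct application of definitions. Note that this proposition is essentially a repackaging of the separation half of the sheaf axiom, so it cannot hold for arbitrary presheaves (it fails exactly when local agreement does not imply global agreement).
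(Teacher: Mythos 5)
Your proof is correct and follows essentially the same route as the paper: unwind the germ equality at each point to obtain a neighborhood $W_x \subseteq U$ on which $s$ and $t$ agree, note that these neighborhoods cover $U$, and conclude by the locality (separation) axiom. Your additional remark about choosing $W_x$ inside $U$ and the failure for general presheaves is a sensible clarification but does not change the argument.
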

\begin{proof}
For each $x\in U$, $s_x = t_x$ implies there is a neighborhood $V_x \subseteq U$ of $x$ such that $s|_{V_x} = t|_{V_x}$. The collection $\{V_x\}_{x\in U}$ forms an open cover of $U$. By the sheaf locality axiom, since $s$ and $t$ agree on each $V_x$ in the cover, they must be equal on $U$. 
\end{proof}

This shows the intuition that a sheaf is a kind of ``local object''---knowledge of it on arbitrarily small neighborhoods (stalks) reconstructs it globally.

Sheaves also have a beautiful \emph{geometric} interpretation via the notion of an \textbf{étalé space} (sometimes spelled ``étale space''). This is another way to visualize a sheaf as a topological space mapping down to $X$:

\begin{definition}[Étale Space of a Sheaf]\label{def:etale_space}
Let $\mathcal{F}$ be a sheaf on $X$. The \emph{étalé space} of $\mathcal{F}$, denoted $E(\mathcal{F})$ or sometimes just $E$, is the disjoint union of all stalks of $\mathcal{F}$:
\[ 
E(\mathcal{F}) = \bigsqcup_{x \in X} \mathcal{F}_x\,,
\] 
equipped with a natural topology and a projection map $\pi: E(\mathcal{F}) \to X$ defined by sending each element of $\mathcal{F}_x$ to the point $x$ (so $\pi|_{\mathcal{F}_x}: \mathcal{F}_x \to \{x\}$ is trivial). The topology on $E(\mathcal{F})$ is defined by specifying a basis of open sets: for each open $U \subseteq X$ and each section $s \in \mathcal{F}(U)$, define 
\[ 
\widetilde{s}(U) := \{\, s_y \in \mathcal{F}_y \mid y \in U \,\} \subseteq E(\mathcal{F})\,,
\] 
the set of germs of $s$ at all points of $U$. One checks that $\pi(\widetilde{s}(U)) = U$. The collection of all such $\widetilde{s}(U)$, for all $U$ and all $s \in \mathcal{F}(U)$, forms a basis for a topology on $E(\mathcal{F})$ under which $\pi: E(\mathcal{F}) \to X$ becomes a local homeomorphism (étale map). $E(\mathcal{F})$ is called the étalé (or \emph{sheaf}) space of $\mathcal{F}$.
\end{definition}

The topology on $E(\mathcal{F})$ is carefully chosen to establish a precise correspondence between sections of the sheaf $\mathcal{F}$ and continuous sections of the projection map $\pi$. Specifically, the basic open sets $\widetilde{s}(U)$ ensure that sections of $\mathcal{F}$ induce continuous maps $\sigma: U \to E(\mathcal{F})$ defined by $\sigma(x) = s_x$. 

The condition that $\pi: E(\mathcal{F}) \to X$ is a local homeomorphism means every point in $E(\mathcal{F})$ has a neighborhood that is mapped homeomorphically onto an open set of $X$. In fact, the sets $\widetilde{s}(U)$ themselves provide such neighborhoods: each $\widetilde{s}(U) \subseteq E(\mathcal{F})$ is homeomorphic (via $\pi$) to $U$. This local homeomorphism property ensures that the local nature of a section $s$ guarantees that its corresponding map $\sigma$ is continuous. This is why this construction is called ``étale'' (French for roughly \emph{spread out flat}): the sheaf is realized as a covering-like space over $X$.

The sheaf axioms have natural interpretations in this topology:
\begin{itemize}
\item The locality axiom ensures that sections are uniquely determined by their germs, which corresponds to section maps $\sigma: U \to E(\mathcal{F})$ being uniquely determined by their values.
\item The gluing axiom ensures that locally compatible sections can be glued together, which corresponds to local section maps that agree on overlaps being able to be glued into a continuous global section map.
\end{itemize}

Intuitively, the étalé space is like taking each possible germ as an actual point lying over the original base space. For example, for the sheaf $\mathcal{C}^0_X$ of continuous real functions, it's important to note that $E(\mathcal{C}^0_X)$ is not simply $X \times \mathbb{R}$ as one might naively think. Although the space of sections of $X \times \mathbb{R}$ is the sheaf of continuous functions, the projection from $X \times \mathbb{R}$ to $X$ is not a local homeomorphism, which is a required property of étalé spaces. The correct étalé space for continuous functions has a more complex structure that does satisfy the local homeomorphism condition. If we instead took the sheaf of locally constant $\mathbb{R}$-valued functions, its étalé space would be $X \times \mathbb{R}$ with the discrete topology on $\mathbb{R}$, which is a covering space of $X$. Thus covering spaces correspond to sheaves that are locally constant. In general, an étalé space can be seen as a sort of ``variable set'' varying continuously over $X$.

Every section $s \in \mathcal{F}(U)$ corresponds to a continuous map (actually a section in the topological sense) $\sigma: U \to E(\mathcal{F})$ of the étalé space, defined by $\sigma(x) = s_x$ (the germ of $s$ at $x$). The condition that $s$ is a sheaf section exactly ensures that $\sigma$ is continuous and $\pi \circ \sigma = \mathrm{id}_U$. Thus there is a natural bijection:

\[
\mathcal{F}(U) \;\cong\; \{\text{continuous sections } \sigma: U \to E(\mathcal{F}) \text{ of }\pi\}\,,
\]

valid for every open $U \subseteq X$. In other words, giving a sheaf $\mathcal{F}$ is equivalent to giving a local homeomorphism $\pi: E \to X$ (where $E$ is the étalé space) together with this correspondence of sections. In category-theoretic terms, the functor $\mathcal{F} \mapsto E(\mathcal{F})$ establishes an equivalence between the category of sheaves on $X$ and the category of étalé spaces over $X$ (local homeomorphisms to $X$). We will not prove this formally here, but it is a beautiful perspective.

\begin{remark}
The étalé space perspective allows one to think of a sheaf as a kind of ``spread-out function or object''. For instance, one can visualize the sheaf of germs of a function as literally taking each point of $X$ and attaching all possible germ values at that point as a fiber. If $\mathcal{F}$ is a sheaf of sets, $E(\mathcal{F})$ is like a topological bundle (not necessarily with a single fiber type). If $\mathcal{F}$ is a sheaf of groups or rings, each stalk $\mathcal{F}_x$ has that algebraic structure, and one can often turn $E(\mathcal{F})$ into a topological group (but caution: generally it’s only a space fibered in groups, not a group globally unless something like a trivialization exists).
\end{remark}

The étalé space construction also provides an alternative way to construct the \emph{sheafification} of a presheaf, as we will see next.

\section{Sheafification}\label{sec:sheafification}
Not every presheaf is a sheaf (as we saw). However, an important fact is that for any presheaf, there is a best possible approximation of it by a sheaf, called its \textbf{sheafification}. Sheafification is analogous to, say, group completion or abelianization: it’s a functor that makes a presheaf into a sheaf in a universal way. 

\begin{theorem}[Existence of Sheafification]\label{thm:sheafification_exists}
For any presheaf $P$ on $X$, there exists a sheaf $P^+$ on $X$ together with a morphism of presheaves $\eta: P \to P^+$ satisfying the following universal property: for any sheaf $\mathcal{F}$ on $X$ and any presheaf morphism $\phi: P \to \mathcal{F}$, there exists a unique sheaf morphism $\phi^+: P^+ \to \mathcal{F}$ such that $\phi = \phi^+ \circ \eta$. \cite{GallierQuaintance2024} In categorical terms, $P^+$ is the sheafification of $P$, and the functor $P \mapsto P^+$ is left adjoint to the inclusion functor from sheaves to presheaves. Moreover, $P^+$ is unique up to isomorphism by this universal property.
\end{theorem}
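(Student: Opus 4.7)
The plan is to construct $P^+$ concretely using the étalé space machinery from Section \ref{sec:stalks}, which provides the most geometric and direct route given what has been established. First I would form the disjoint union of stalks $E(P) = \bigsqcup_{x \in X} P_x$ (noting that stalks are defined for arbitrary presheaves, not only sheaves) and equip it with the topology generated by the basic opens $\widetilde{s}(U) = \{\, s_y : y \in U \,\}$ for $s \in P(U)$, so that $\pi: E(P) \to X$ becomes a local homeomorphism. I would then define
\[
P^+(U) := \{\, \sigma: U \to E(P) \text{ continuous} \mid \pi \circ \sigma = \mathrm{id}_U \,\}
\]
with restriction given by ordinary restriction of functions.

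The next step is verifying that $P^+$ is in fact a sheaf. Locality is immediate since any two such sections that agree on the members of a cover agree pointwise, and gluing follows because continuity is a local property: given locally compatible continuous sections $\sigma_i$ on an open cover $\{U_i\}$ of $U$, the pointwise glued map $\sigma$ is well-defined by the compatibility hypothesis, and continuous because it coincides with each $\sigma_i$ on $U_i$. To define the unit $\eta: P \to P^+$, I would send $s \in P(U)$ to the map $\widetilde{s}: U \to E(P)$, $x \mapsto s_x$; the image $\widetilde{s}(U)$ is a basic open by construction, and $\pi$ restricted to it is a homeomorphism onto $U$, so $\widetilde{s}$ is continuous.

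For the universal property, given a sheaf $\mathcal{F}$ and a presheaf morphism $\phi: P \to \mathcal{F}$, I would construct $\phi^+: P^+ \to \mathcal{F}$ as follows. A section $\sigma \in P^+(U)$ is locally, on some cover $\{V_i\}$ of $U$, of the form $\widetilde{s_i}$ for some $s_i \in P(V_i)$ (by the definition of the topology on $E(P)$). The images $t_i := \phi_{V_i}(s_i) \in \mathcal{F}(V_i)$ then agree stalkwise on each overlap $V_i \cap V_j$ (both equal the element determined by $\phi_x(\sigma(x))$ at each $x$), hence agree as sections by Proposition \ref{prop:stalk_equal}, and therefore glue via the sheaf axiom for $\mathcal{F}$ to a unique $\phi^+(\sigma) \in \mathcal{F}(U)$. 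The equation $\phi = \phi^+ \circ \eta$ is built into the definition, and uniqueness of $\phi^+$ follows because any competing morphism must agree with this one on germs, and sheaf morphisms are determined by their stalks.

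The main obstacle will be the bookkeeping in verifying the universal property: one must check that the locally produced sections $t_i$ glue consistently and that the resulting $\phi^+(\sigma)$ is independent of the choice of local cover $\{V_i\}$ and representatives $s_i$. Both reduce cleanly to Proposition \ref{prop:stalk_equal}, exploiting that $\eta$ induces an isomorphism $P_x \cong (P^+)_x$ on every stalk (since direct limits over neighborhoods are unaffected by sheafification). Uniqueness of $P^+$ up to isomorphism is then automatic from the universal property by the standard categorical argument, and the adjunction statement follows by recognizing the construction $P \mapsto P^+$ as left adjoint to the forgetful embedding $\mathbf{Sh}(X) \hookrightarrow \mathbf{PSh}(X)$.
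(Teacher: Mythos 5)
Your construction is correct and is essentially the same as the paper's: the paper defines $P^+(U)$ as functions $U \to \bigsqcup_{x} P_x$ that are locally germs of a single section of $P$, and explicitly notes this is equivalent to taking continuous sections of the étalé space of $P$, which is exactly your route. In fact you supply more detail than the paper does on the universal property (which the paper dismisses as ``a bit technical''), and your reduction of well-definedness and uniqueness to Proposition~\ref{prop:stalk_equal} via the stalk isomorphism $\eta_x: P_x \cong (P^+)_x$ is sound.
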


In simpler terms, $P^+$ is a sheaf containing $P$ as a sub-presheaf (via $\eta$), and any attempt to map $P$ into a sheaf factors uniquely through $P^+$. It's important to note that the natural map $\eta: P(U) \to P^+(U)$ is not necessarily injective for all $U$. Nevertheless, $P^+$ is the ``smallest'' sheaf that contains $P$ in a suitable sense. If $P$ is already a sheaf, then $P^+ \cong P$ (sheafification doesn't change a sheaf).

There are several ways to explicitly construct $P^+$. We describe a common construction via stalks and germs:

Let $P$ be a presheaf. Define a candidate for sections of $P^+$ over an open set $U$ as:

\[
P^+(U) := \{\, s: U \to \bigsqcup_{x \in U} P_x \mid s(x) \in P_x \text{ for each $x$, and $s$ is ``locally a germ of a single section of $P$''} \,\}\,.
\]

In other words, an element of $P^+(U)$ is like a section of the étalé space of $P$ over $U$, where now $P$ need not be a sheaf so we consider all germs in each stalk $P_x$. Formally, one can define $P^+(U)$ as the set of functions $s$ assigning to each $x \in U$ a germ $s_x \in P_x$ such that for each $x \in U$, there exists a neighborhood $V \subseteq U$ of $x$ and some section $t \in P(V)$ whose germ at every point $y \in V$ equals $s_y$. In other words, $s$ is locally (around every point) coming from an actual presheaf section. Such an $s$ is essentially a section of the étalé space of $P$ that is continuous in the sense of the basis given by germs. We define restriction of such an $s$ to a smaller open $W \subseteq U$ in the obvious way: $(s|_W)(x) = s(x)$ for $x \in W$. This indeed defines a sheaf $P^+$ (one must check the sheaf axioms, which follow from the construction). By construction, there's a natural presheaf morphism $\eta: P \to P^+$ sending each section $t \in P(U)$ to the section of $P^+$ given by $x \mapsto$ germ of $t$ at $x$. The fact that $P^+$ satisfies the universal property of sheafification is a bit technical but essentially stems from the fact that any map out of $P$ into a sheaf factors through maps on germs.

Another (equivalent) perspective: The above construction is reminiscent of first forming the étalé space $E(P) = \bigsqcup_{x} P_x$ (which is a set of germs) and then taking sections in the topological sense. Essentially, $P^+$ is the sheaf of \emph{continuous sections of the étalé space of $P$}. This is sometimes called the \emph{étalé sheafification}. There is also a more algebraic description: one can first force the identity axiom (separation) by quotienting $P$ by the relation identifying sections that agree on an open cover, and then force the gluing axiom by an additional colimit construction. But the stalk/germ method is more intuitive.

\begin{example}
Consider again the constant presheaf $P$ with $P(U)=S$ for all $U$. Its stalk at any $x$ is 
\[P_x = \varinjlim_{U \ni x} S = S,\] 
since any germ is represented by an element of $S$ on some neighborhood. The sheafification $P^+$ is the sheaf of locally constant functions with values in $S$. Indeed, an element of $P^+(U)$ is by definition a choice of a germ in $S$ at each $x \in U$ which locally comes from a single presheaf section. But presheaf sections of $P$ are just constant functions on $U$ (since $P(V)=S$ for any $V$). So the condition that $s(x)$ is locally a germ of a single section means: for each $x$ there is a neighborhood $V$ of $x$ and an element $a \in S$ such that for all $y \in V$, $s(y)$ is the germ of the constant section $a$. But the germ of the constant section $a$ at $y$ is just $a$ itself. So $s(y) = a$ for all $y$ in $V$. Thus $s$ is a locally constant function $U \to S$. Hence $P^+(U)$ can be identified with $\{f: U \to S \mid f \text{ is locally constant}\}$. This is exactly the usual constant sheaf $\underline{S}_X$ (the sheaf of locally constant $S$-valued functions). Therefore $\underline{S}_X$ is the sheafification of the constant presheaf $P$. We saw earlier that $P$ was not a sheaf unless $S$ is trivial or $X$ connected, but $\underline{S}_X$ is a genuine sheaf. The map $\eta: P(U)=S \to P^+(U)$ sends the single value $a \in S$ to the constant function $f(y)=a$ on $U$. This clearly satisfies the universal property: any sheaf receiving a map from $P$ (i.e. giving an element of $S$ for each open in a compatible way) must factor through $\underline{S}_X$ (which picks out the locally constant function determined by those chosen values).
\end{example}

\begin{remark}[Functoriality of Sheafification]
The sheafification functor $(\cdot)^+$ is functorial: a presheaf morphism $\phi: P \to Q$ induces a sheaf morphism $\phi^+: P^+ \to Q^+$. Functoriality follows naturally by defining $\phi^+$ pointwise: given a germ $[s]_x \in (P^+)_x$, we set  
\[
\phi^+([s]_x) = [\phi(s)]_x,
\]
ensuring compatibility with restrictions. This functoriality means that sheafification is a \emph{functor} left adjoint to the inclusion of sheaves into presheaves. Sheafification also commutes with taking stalks: $(P^+)_x \cong P_x$ for each $x$. This is intuitive since the stalk is already a very local object, the sheaf axioms don't change the germs at a single point.
\end{remark}

Having established the basic theory of sheaves (definitions, examples, morphisms, stalks, sheafification), we have the machinery needed to study \textbf{sheaf cohomology}. We have hinted that sheaf cohomology arises from the failure of the sequence 
\[ 0 \to F(U) \to \prod_i F(U_i) \rightrightarrows \prod_{i,j} F(U_i \cap U_j) \to \cdots \] 
to be exact beyond the first term. In fact, continuing this sequence leads to the \emph{\v{C}ech cohomology} of a sheaf. We will now introduce cohomology both from the elementary \v{C}ech viewpoint and the more general derived-functor viewpoint, then relate them and discuss key properties and examples.

\section{Sheaf Cohomology}\label{sec:cohomology_intro}
One of the main motivations for introducing sheaves is that they allow the definition of \textbf{sheaf cohomology}, which measures the extent to which the process of forming global sections fails to be exact. Cohomology provides obstructions to gluing local data and is an invariant that connects algebraic topology with algebraic geometry and analysis.

\subsection{Motivation: Cohomology as an Obstruction.}
Consider a sheaf $\mathcal{F}$ of abelian groups on $X$ (the abelian condition is needed to have a cohomology theory in the usual sense). We have the exact sequence for any open cover $\{U_i\}$ of an open $U$:
\[ 0 \to \mathcal{F}(U) \to \prod_i \mathcal{F}(U_i) \to \prod_{i<j} \mathcal{F}(U_i \cap U_j) \to \cdots \,.\]
The sheaf axioms tell us that the first arrow is injective (identity axiom) and the kernel of the second arrow equals the image of the first (sections that agree on overlaps come from a global section; gluing axiom). However, the second arrow $\prod_i \mathcal{F}(U_i) \to \prod_{i<j} \mathcal{F}(U_i \cap U_j)$ need not be surjective in general; if it isn't, that means there is a collection of sections $\{t_{ij} \in \mathcal{F}(U_i \cap U_j)\}$ that are pairwise compatible on triple overlaps, but which do not come as differences of a bunch of sections $\{s_i \in \mathcal{F}(U_i)\}$. In other words, there is an \emph{obstruction} to finding local sections $s_i$ that realize given overlap data $t_{ij}$. This obstruction is measured by the \textbf{\v{C}ech cohomology} group $\check{H}^1(\{U_i\}, \mathcal{F})$ for that cover. More generally, the continued failure of exactness at higher stages leads to higher cohomology groups $\check{H}^p(\{U_i\}, \mathcal{F})$.

Thus, cohomology arises as soon as we have a nontrivial cycle of data that is not a boundary of something from a previous stage. For sheaves, $\check{H}^0$ is just global sections, and $\check{H}^1$ detects the obstruction to gluing sections (this often corresponds to torsors or twisting of objects; e.g. line bundles are classified by $H^1$ of the sheaf of invertible functions). Higher $H^p$ can similarly be interpreted (though such interpretations get more abstract). 

One can also motivate sheaf cohomology via more classical cohomology theories. For example, singular cohomology of a space $X$ can be recovered as sheaf cohomology of the constant sheaf $\underline{\mathbb{Z}}_X$ (under mild conditions on $X$). Sheaf cohomology provides a unified framework that encompasses many classical cohomology theories. In particular, the de Rham cohomology on a smooth manifold $M$ is isomorphic to the sheaf cohomology of the de Rham complex $\Omega^\bullet$:
\[ H_{\text{dR}}^*(M) \cong H^*(M, \Omega^\bullet). \]
Sheaf cohomology is extremely general: it works for any sheaf of abelian groups on any topological space, and in algebraic geometry one uses it for the Zariski or étale topology, etc., where other tools are not readily available.

\emph{Sheaf cohomology measures the failure of local data to determine global data exactly}. Next, we define the two main approaches to sheaf cohomology: \v{C}ech cohomology and derived functor cohomology.

\subsection{\v{C}ech Cohomology.}\label{sec:cech_cohom}
Let $\mathcal{F}$ be a sheaf of abelian groups on $X$. Choose an open cover $\mathfrak{U} = \{U_i\}_{i\in I}$ of $X$. The \textbf{\v{C}ech complex} of $\mathcal{F}$ with respect to this cover is the cochain complex:
\[ 
C^p(\mathfrak{U}, \mathcal{F}) := \prod_{i_0,i_1,\dots,i_p \in I} \mathcal{F}(U_{i_0} \cap U_{i_1} \cap \cdots \cap U_{i_p})\,,
\] 
the product of sections on all $(p+1)$-fold intersections of the cover sets. An element of $C^p(\mathfrak{U},\mathcal{F})$ can be thought of as an assignment of a section on each $p$-fold intersection $U_{i_0 \dots i_p} := U_{i_0}\cap \cdots \cap U_{i_p}$. In low degrees: 
\begin{itemize}
    \item $C^0(\mathfrak{U},\mathcal{F}) = \prod_{i} \mathcal{F}(U_i)$, a 0-cochain is a choice of section $s_i \in \mathcal{F}(U_i)$ for each $i$ (an open set in the cover).
    \item $C^1(\mathfrak{U},\mathcal{F}) = \prod_{i,j} \mathcal{F}(U_i \cap U_j)$, a 1-cochain is a choice of section $t_{ij} \in \mathcal{F}(U_i \cap U_j)$ for each ordered pair $(i,j)$.
    \item etc.
\end{itemize}
We define the \textbf{\v{C}ech differential} $\delta: C^p \to C^{p+1}$ by an alternating sum that is analogous to the simplicial cohomology differential:
For a $p$-cochain $c = \{ c_{i_0\dots i_p} \}$ (where $c_{i_0\dots i_p} \in \mathcal{F}(U_{i_0\dots i_p})$), 
\[ (\delta c)_{i_0 i_1 \dots i_{p+1}} = \sum_{k=0}^{p+1} (-1)^k\, c_{i_0 \dots \widehat{i_k} \dots i_{p+1}} \big|_{U_{i_0 \dots i_{p+1}}}\,. \] 
Here the hat indicates omission of that index, and the restriction $|_{U_{i_0 \dots i_{p+1}}}$ means we restrict the section from a $p$-fold intersection down to the $(p+1)$-fold intersection that is contained in it. This formula is the usual alternating sum but with the important aspect that we must restrict each term to the common intersection $U_{i_0 \dots i_{p+1}}$ in order to subtract sections living on the same domain. 

One checks that $\delta^{2}=0$ (this relies on the presheaf condition for $\mathcal{F}$, essentially) so $(C^\bullet(\mathfrak{U},\mathcal{F}), \delta)$ is a cochain complex. Here, $C^\bullet(\mathfrak{U},\mathcal{F})$ denotes the collection of all $C^p(\mathfrak{U},\mathcal{F})$ for all $p \geq 0$, where the superscript bullet ($\bullet$) is a standard notation indicating the entire complex of cochain groups in all degrees. The cohomology of this complex, 
\[ 
\check{H}^p(\mathfrak{U}, \mathcal{F}) := H^p(C^\bullet(\mathfrak{U},\mathcal{F})) = \ker(\delta: C^p \to C^{p+1}) / \mathrm{im}(\delta: C^{p-1} \to C^p)\,,
\] 
is called the \textbf{\v{C}ech cohomology of $\mathcal{F}$ with respect to the cover $\mathfrak{U}$}. It is a priori a cover-dependent notion. For example, $\check{H}^0(\mathfrak{U},\mathcal{F})$ is 
\[ \ker(\delta: C^0 \to C^1) = \{ (s_i) \in \prod_i \mathcal{F}(U_i) \mid s_i|_{U_i \cap U_j} = s_j|_{U_i \cap U_j}\ \forall i,j \}\,,\] 
which by the sheaf gluing axiom is isomorphic to $\mathcal{F}(X)$ (the global sections). So $\check{H}^0(\mathfrak{U},\mathcal{F}) \cong \mathcal{F}(X)$ for any cover $\mathfrak{U}$. Meanwhile, a 1-cocycle is a collection $(t_{ij})$ with $t_{ij} \in \mathcal{F}(U_i \cap U_j)$ such that $\delta(t)_{ijk}=0$, which explicitly means 
\[ t_{jk} - t_{ik} + t_{ij} = 0 \quad \text{in }\mathcal{F}(U_{ijk}) \] 
for all $i,j,k$. This is precisely the condition that $\{t_{ij}\}$ is a compatible family on triple overlaps. Such a $\{t_{ij}\}$ is called a 1-cocycle. It is a coboundary (1-coboundary) if $t_{ij} = s_i|_{U_{ij}} - s_j|_{U_{ij}}$ for some sections $s_i \in \mathcal{F}(U_i)$. Thus $\check{H}^1(\mathfrak{U},\mathcal{F})$ consists of equivalence classes of 1-cocycles under those trivialized by 0-cochains. This matches the description of gluing obstructions earlier.

If we refine the cover (take a cover $\mathfrak{V}$ that is a refinement of $\mathfrak{U}$), there are natural restriction maps 
\[ \check{H}^p(\mathfrak{U},\mathcal{F}) \to \check{H}^p(\mathfrak{V},\mathcal{F}) \] 
that induce an inverse system. One then defines the \textbf{\v{C}ech cohomology of $\mathcal{F}$ on $X$} (with no cover specified) as the direct limit over all open covers:
\[ 
\check{H}^p(X,\mathcal{F}) := \varinjlim_{\mathfrak{U}} \check{H}^p(\mathfrak{U},\mathcal{F})\,.
\] 

\cite{Rotman2009}

In many nice situations, a single good cover yields the same cohomology as the direct limit (for example, if $X$ is a paracompact space, any open cover has a refinement which is \emph{uniformly fine} enough that further refinement doesn't change the cohomology). In the context of manifolds or CW complexes, one often works with a good cover (e.g. all intersections are contractible) which simplifies computations. 

We note a couple of important facts:
\begin{itemize}
    \item For a sheaf of abelian groups, $\check{H}^p(X,\mathcal{F})$ is functorial in $\mathcal{F}$: a sheaf morphism $\mathcal{F} \to \mathcal{G}$ induces maps on cochains and hence on cohomology. So cohomology is a functor $H^p: \mathbf{Sh}(X,\mathbf{Ab}) \to \mathbf{Ab}$ (from sheaves to abelian groups).
\item If $X$ is a reasonably nice space (e.g. paracompact Hausdorff), then \v{C}ech cohomology for sheaves is isomorphic to the sheaf cohomology defined via derived functors (discussed next), which is the more general definition. For paracompact Hausdorff spaces, this isomorphism is typically proven via a spectral sequence argument, often using the Godement resolution, which relates the two cohomology theories. However, for pathological spaces or sheaves, \v{C}ech cohomology might differ from derived functor cohomology (Grothendieck famously gave conditions when they agree and when they might not). In algebraic geometry, one often can use \v{C}ech cohomology computed with affine open covers since higher cohomology vanishes on affines.
    \item \v{C}ech cohomology can be used to derive long exact sequences: given a short exact sequence of sheaves $0 \to \mathcal{F}' \to \mathcal{F} \to \mathcal{F}'' \to 0$, one can construct a long exact sequence in \v{C}ech cohomology $\cdots \to \check{H}^p(X,\mathcal{F}') \to \check{H}^p(X,\mathcal{F}) \to \check{H}^p(X,\mathcal{F}'') \to \check{H}^{p+1}(X,\mathcal{F}') \to \cdots$ (the connecting homomorphism comes from a cocycle construction). Under conditions where \v{C}ech agrees with derived functor cohomology, this is the same long exact sequence we get from derived functors.
\end{itemize}

\subsection{Derived Functor Definition.}\label{sec:derived_functors}
A more abstract (but powerful) approach to sheaf cohomology is via \textbf{derived functors}. The key observation is that the global section functor $\Gamma(X,-): \mathbf{Sh}(X,\mathbf{Ab}) \to \mathbf{Ab}$ (which takes a sheaf $\mathcal{F}$ to $\mathcal{F}(X)$) is a left-exact functor between abelian categories. Indeed, $0 \to \mathcal{F}' \to \mathcal{F} \to \mathcal{F}'' \to 0$ exact implies $0 \to \Gamma(X,\mathcal{F}') \to \Gamma(X,\mathcal{F}) \to \Gamma(X,\mathcal{F}'')$ is exact, since taking global sections is just evaluating at an open (and exactness at that first stage is precisely sheaf locality for $X$ itself, which holds). However, $\Gamma(X,-)$ is not in general right-exact; the image of $\Gamma(X,\mathcal{F}) \to \Gamma(X,\mathcal{F}'')$ might not equal $\Gamma(X,\mathcal{F}'')$ if not every local section of $\mathcal{F}''$ lifts globally in $\mathcal{F}$. That failure is exactly measured by higher derived functors.

Because $\mathbf{Sh}(X,\mathbf{Ab})$ is an abelian category with enough injectives, we can take an injective resolution of any sheaf. If $\mathcal{F}$ is a sheaf, choose an injective resolution:
\[ 
0 \to \mathcal{F} \to I^0 \to I^1 \to I^2 \to \cdots 
\] 
with each $I^p$ injective (such resolutions exist because sheaves have enough injectives). Now apply the functor $\Gamma(X,-)$ to this complex. $\Gamma(X,I^p)$ is an abelian group, and since $I^p$ is injective, $\Gamma(X,I^p)$ is an exact functor of $\mathcal{F}$ (so higher cohomology of an injective is zero). We obtain a cochain complex:
\[ 0 \to \Gamma(X,I^0) \to \Gamma(X,I^1) \to \Gamma(X,I^2) \to \cdots, \] 
whose cohomology groups are by definition the \textbf{sheaf cohomology groups}:
\[ 
H^p(X,\mathcal{F}) := H^p(\Gamma(X,I^\bullet))\,. 
\] 

\cite{GallierQuaintance2024}

These are the \emph{right derived functors} $R^p \Gamma(X,\mathcal{F})$. By general homological algebra, this definition is independent of the choice of injective resolution and satisfies the usual properties: for instance, it gives a long exact cohomology sequence for any short exact sequence of sheaves (since one can splice resolutions or apply the snake lemma to the double complex formed by two resolutions, etc.). Moreover, one shows that $H^0(X,\mathcal{F}) \cong \Gamma(X,\mathcal{F})$ (because $\Gamma(X,-)$ is left exact, so $H^0$ yields its value), and for an injective sheaf $I$, $H^p(X,I)=0$ for $p>0$.

\[ H^p(X,\mathcal{F}) = R^p \Gamma(X,\mathcal{F}) \]
is the $p$th right derived functor of global sections. This definition is elegant and conceptual, and it coincides with \v{C}ech cohomology for sufficiently nice spaces (in fact, always there is a canonical map $\check{H}^p(X,\mathcal{F}) \to H^p(X,\mathcal{F})$ that is an isomorphism if $\mathcal{F}$ is what is known as a \emph{flasque} or \emph{soft} or other acyclic sheaf or if $X$ is paracompact, etc.).

A concrete way to compute sheaf cohomology using derived functors is to use a resolution by acyclic sheaves (ones whose higher cohomology vanishes). For example, \textbf{flasque sheaves} are those $\mathcal{L}$ such that $\mathcal{L}(X) \to \mathcal{L}(U)$ is surjective for all open $U$; they are soft and satisfy $H^p(X,\mathcal{L})=0$ for all $p>0$ (since one can always extend local sections to global sections, which implies any \v{C}ech cocycle is a coboundary). Every sheaf has a flasque resolution, so one can compute cohomology as the cohomology of the global section complex of a flasque resolution (this is another approach to showing existence of $H^p$). This often simplifies computations in practice compared to injective resolutions (which are abstract).

The derived functor viewpoint also immediately gives functoriality in the sheaf argument, long exact sequences, and powerful tools like spectral sequences (Leray spectral sequence relates $H^p(X, R^q f_* \mathcal{F})$ to $H^{p+q}(Y,\mathcal{F})$ for a continuous map $f: Y \to X$) and base change theorems. It is the standard approach in advanced texts like Hartshorne's \emph{Algebraic Geometry} \cite{Hartshorne1977} and Grothendieck's work \cite{Grothendieck1957}.

$H^p(X,\mathcal{F})$ is a sequence of abelian groups (or modules) associated to each sheaf $\mathcal{F}$. They are zero for $p<0$, and $H^0(X,\mathcal{F}) = \mathcal{F}(X)$. They are functorial in $\mathcal{F}$ and fit into long exact sequences. They also often coincide with classical cohomology theories in special cases (e.g., if $\mathcal{F}=\underline{A}_X$ for a constant sheaf $A$ on a reasonably nice space, then $H^p(X,A) \cong H^p_{\text{sing}}(X;A)$, the singular cohomology of $X$ with coefficients in $A$).

\subsection{\v{C}ech vs. Derived Functor Cohomology.}
As noted, \v{C}ech cohomology $\check{H}^p(X,\mathcal{F})$ is not obviously the same as $H^p(X,\mathcal{F})$ in general, but there are conditions under which they agree. Typically, if $X$ is paracompact (or has a basis that is good for covers) and $\mathcal{F}$ is a sheaf of abelian groups (or more specifically a fine sheaf, etc.), then $\check{H}^p \cong H^p$. More concretely, if $\mathcal{F}$ is such that every open cover's higher \v{C}ech cohomology eventually stabilizes (which is true for paracompact spaces), then one can show any injective resolution gives the same result, and that result matches the direct limit of \v{C}ech. 

In practice, one often uses \v{C}ech cohomology to compute sheaf cohomology because it's more combinatorial. For example, in complex geometry, to compute $H^p(X,\mathcal{O}_X)$ (cohomology of the structure sheaf) one often uses a cover by affines and computes \v{C}ech cohomology since each affine piece has trivial cohomology and sections on intersections are computable, etc.

Grothendieck showed that if $\mathcal{F}$ is a \textbf{fine sheaf} on a paracompact space, then $H^p(X,\mathcal{F}) = 0$ for all $p>0$. \cite{GallierQuaintance2024} A sheaf $\mathcal{F}$ is called \textbf{fine} if for any locally finite open cover $\{U_i\}_{i \in I}$ of $X$, there exists a family of sheaf endomorphisms $\{\phi_i: \mathcal{F} \to \mathcal{F}\}_{i \in I}$ such that:
\begin{enumerate}
    \item For each $i \in I$, the support of $\phi_i$ is contained in $U_i$ (meaning $\phi_i(s)|_{X \setminus U_i} = 0$ for all sections $s$)
    \item $\sum_{i \in I} \phi_i = \text{id}_{\mathcal{F}}$ (the identity morphism on $\mathcal{F}$)
\end{enumerate}

This definition generalizes the notion of partitions of unity to the setting of sheaves. For example, the sheaf of smooth functions or differential forms on a manifold is fine via the existence of smooth partitions of unity. Fine sheaves are a subset of flasque sheaves (in fact fine $\implies$ flasque), so they are acyclic. Therefore, e.g. the sheaf of smooth functions $C^\infty_X$ on a paracompact manifold has no higher cohomology ($H^p(X,C^\infty_X)=0$ for $p>0$). Similarly, the sheaf of continuous real functions is fine (if paracompact Hausdorff), so it has no cohomology above 0. This doesn't mean the space has no topology; rather, it means that these sheaves are too flexible to capture interesting invariants (in contrast, constant sheaves are not fine, and indeed $H^p(X,\underline{\mathbb{R}})$ recovers real cohomology of $X$ which can be nonzero).

One important situation of agreement is for \textbf{coherent analytic sheaves} on complex manifolds or coherent algebraic sheaves on varieties: in those contexts, one often uses \v{C}ech cohomology with respect to an open cover by contractible (or affine) sets to compute $H^p$. For example, on a complex manifold, Dolbeault's theorem states $H^q(X,\Omega^p_X)$ (sheaf cohomology of holomorphic $p$-forms) is isomorphic to the $(p,q)$th Dolbeault cohomology of the manifold, which is computed using fine resolutions (the Dolbeault complex of $C^\infty$ forms). 

$\check{H}^p(X,\mathcal{F})$ and $H^p(X,\mathcal{F})$ coincide in most situations of interest, and one usually denotes them simply as $H^p(X,\mathcal{F})$. We will assume henceforth that we are working in a context where this is true (e.g. $X$ is paracompact).

\subsection{Properties of Sheaf Cohomology.}
We list some key properties and theorems of sheaf cohomology (mostly consequences of it being a derived functor):
\begin{itemize}
    \item \textbf{Long Exact Sequence:} If $0 \to \mathcal{F}' \to \mathcal{F} \to \mathcal{G} \to 0$ is an exact sequence of sheaves (of abelian groups), there is a natural long exact sequence in cohomology:
    \[
    0 \to H^0(X,\mathcal{F}') \to H^0(X,\mathcal{F}) \to H^0(X,\mathcal{G}) \xrightarrow{\;\delta\;} H^1(X,\mathcal{F}') \to H^1(X,\mathcal{F}) \to H^1(X,\mathcal{G}) \to \cdots\,,
    \] 
    continuing as $H^1 \to H^2$ and so on. The connecting homomorphism $\delta: H^0(X,\mathcal{G}) \to H^1(X,\mathcal{F}')$ measures the obstruction to lifting a global section of $\mathcal{G}$ to $\mathcal{F}$: given $s \in \mathcal{G}(X)$, one can locally lift it (by surjectivity on stalks of $\mathcal{F} \to \mathcal{G}$) to sections of $\mathcal{F}$, and the difference of two local lifts defines a \v{C}ech 1-cocycle in $\mathcal{F}'$ whose class is $\delta(s)$. This general mechanism is analogous to how connecting maps in algebraic cohomology measure extension classes.
    
    \item \textbf{Functoriality:} If $f: Y \to X$ is a continuous map and $\mathcal{F}$ a sheaf on $X$, there are two important functors between sheaf categories:

\begin{enumerate}
    \item \textbf{Pullback (inverse image)}: The pullback sheaf $f^{-1}\mathcal{F}$ on $Y$ is defined as the sheafification of the presheaf that assigns to each open set $V \subset Y$ the direct limit:
    \[
    (f^{-1}\mathcal{F})^{\text{pre}}(V) = \varinjlim_{U \supset f(V)} \mathcal{F}(U)
    \]
    where the limit is taken over all open sets $U \subset X$ containing $f(V)$. Intuitively, $f^{-1}\mathcal{F}$ is the sheaf on $Y$ that "pulls back" the sections of $\mathcal{F}$ via $f$.
    
    \item \textbf{Pushforward (direct image)}: The pushforward sheaf $f_*\mathcal{G}$ on $X$ for a sheaf $\mathcal{G}$ on $Y$ is defined by:
    \[
    (f_*\mathcal{G})(U) = \mathcal{G}(f^{-1}(U))
    \]
    for any open set $U \subset X$. In other words, sections of $f_*\mathcal{G}$ over $U$ are precisely the sections of $\mathcal{G}$ over the preimage $f^{-1}(U)$.
\end{enumerate}

These functors induce maps on cohomology (contravariant for pullback, covariant for pushforward). In particular, if $f: Y \to X$ is a continuous map of nice spaces, there is the \textbf{Leray spectral sequence}:
\[
E_2^{p,q} = H^p\!\Big(X, R^q f_* \mathcal{G}\Big) \implies H^{p+q}(Y,\mathcal{G})\,,
\] 
which relates cohomology of $Y$ to that of $X$ with higher direct images of $\mathcal{G}$. As a special case, if $f$ is a covering map (or any map such that higher direct images vanish), then $H^n(Y,\mathcal{G}) \cong H^n(X, f_*\mathcal{G})$. For example, if $Y$ is a covering of $X$ with fiber $F$ and $\mathcal{G}$ is a locally constant sheaf on $Y$, then $f_*\mathcal{G}$ is a locally constant sheaf on $X$ with fiber $\mathcal{G}(F)$, and one recovers that $H^*(Y,\mathcal{G}) \cong H^*(X, f_*\mathcal{G})$ (this is a sheafified version of the statement that cohomology of the total space of a covering is cohomology of base with local coefficients).
    
    Another consequence is \textbf{Base Change}: under certain conditions, for a fiber square with $f$ proper and some conditions on $\mathcal{F}$:
\begin{align*}
\begin{array}{ccc}
Y' & \stackrel{g}{\longrightarrow} & Y \\
\downarrow^{f'} & & \downarrow^{f} \\
X' & \stackrel{h}{\longrightarrow} & X
\end{array}
\end{align*}
we have $h^* R^q f_* \mathcal{F} \cong R^q f'_* g^* \mathcal{F}$. This is important in algebraic geometry (cohomology and base change theorems).
    
    \item \textbf{Vanishing Theorems:} Many conditions ensure vanishing of cohomology above a certain degree. For example, if $X$ has covering dimension $d$ (roughly, it can be covered by a finite refinement where no point is in more than $d+1$ sets), then $H^p(X,\mathcal{F}) = 0$ for all $p > d$ for any sheaf $\mathcal{F}$. This is because one can find a cover with nerve of dimension $d$. Specifically, a topological $d$-sphere $S^d$ has $H^p(S^d,\underline{\mathbb{Z}})=0$ for $p>d$.
    
    \item \textbf{Relation to Classical Cohomology:} If $X$ is a reasonably nice topological space (say a CW complex) and $A$ is an abelian group, then $H^p(X,\underline{A}_X) \cong H^p_{\text{sing}}(X;A)$, the usual singular (or Čech) cohomology with coefficients in $A$. For example, $H^1(X,\underline{\mathbb{Z}})$ classifies covering spaces of $X$ (as $\text{Hom}(H_1(X),\mathbb{Z})$ by the Hurewicz isomorphism, or equivalently $\text{Hom}(\pi_1(X),\mathbb{Z})$ if $X$ is nice). Also $H^1(X,\mathcal{O}_X^*)$ for a complex manifold $X$ is (by the exponential sheaf sequence) isomorphic to $H^2(X,\underline{\mathbb{Z}})$, giving the first Chern class of line bundles (this is a deep fact bridging analytic and topological cohomology via the exponential exact sequence of sheaves $0 \to 2\pi i \mathbb{Z} \to \mathcal{O}_X \xrightarrow{\exp} \mathcal{O}_X^* \to 0$).
\end{itemize}

\subsection{Examples and Applications.}
We conclude with several illustrative examples.

\begin{example}[Sheaf Cohomology of a Circle]\label{ex:circle_cohomology}
Let $X = S^1$ be the circle, and consider the constant sheaf $\underline{\mathbb{R}}_X$ (locally constant real-valued functions). We compute $H^p(S^1,\underline{\mathbb{R}})$ using a simple open cover of $S^1$ by two arcs $U_1, U_2$ that overlap in two disconnected segments. Using \v{C}ech cohomology: $\check{H}^0$ gives $\mathbb{R}$ (global constant functions). A 1-cocycle is $(t_{12})$ with $t_{12} \in \mathcal{F}(U_1 \cap U_2)$ such that $\delta t =0$ (here $\delta t$ on triple overlaps is trivial since we have only two sets). So a 1-cocycle is just a section on $U_1 \cap U_2$ (which has two connected components corresponding to the two overlaps). The condition to be a coboundary is that $t_{12} = s_1|_{12} - s_2|_{12}$ for some $s_i$ on $U_i$. But since $U_1 \cap U_2$ has two components, a globally constant function cannot produce an arbitrary pair of values on those two overlaps unless they are equal. Thus one finds $\check{H}^1(S^1,\underline{\mathbb{R}}) \cong \mathbb{R}$, corresponding to assigning a difference $t_{12}$ of value $a$ on one overlap and $-a$ on the other (coming from a sort of discontinuous global section if you try to glue one way around vs the other). In fact, $H^1(S^1,\underline{\mathbb{R}}) \cong \mathrm{Hom}(\pi_1(S^1),\mathbb{R}) \cong \mathbb{R}$, which matches singular cohomology $H^1(S^1,\mathbb{R}) \cong \mathbb{R}$. All higher $H^p=0$ for $p>1$ because $S^1$ has topological dimension 1.
\end{example}

Sheaf cohomology is thus a unifying language: singular cohomology, de Rham cohomology, etc., all fit into the framework by choosing appropriate sheaves (constant sheaf, differential forms sheaf, etc.). It also provides new invariants like the cohomology of structure sheaves in algebraic geometry (leading to definitions of irregularity, geometric genus, etc.). In algebraic topology, one rarely explicitly speaks of sheaf cohomology except to use local coefficient systems; however, in modern geometry and number theory, sheaf cohomology is indispensable.

\begin{exercise}
Compute $H^p(X,\underline{\mathbb{Z}})$ for $X = S^n$ (the $n$-sphere) for various $p$. You can use a good cover of $S^n$ by two contractible open sets whose intersection is homotopy equivalent to $S^{n-1}$. (This should recover the known singular cohomology of spheres.)
\end{exercise}

\begin{exercise}
Show that if $\mathcal{F}$ is a flasque sheaf on $X$, then $H^p(X,\mathcal{F}) = 0$ for all $p > 0$. (\textit{Hint:} For any open cover, every \v{C}ech $p$-cocycle is a coboundary because you can successively extend sections. Alternately, use the fact that $\mathcal{F}$ surjects onto any section on an open set to build a contracting homotopy for the \v{C}ech complex.)
\end{exercise}

\begin{exercise}
Consider the short exact sequence of sheaves on a manifold $X$: 
\[0 \to \underline{\mathbb{Z}}_X \to \underline{\mathbb{R}}_X \to \underline{\mathbb{R}}/\mathbb{Z}_X \to 0,\] 
where $\underline{\mathbb{R}}/\mathbb{Z}_X$ is the locally constant sheaf with fiber $\mathbb{R}/\mathbb{Z} \cong S^1$. Show that this induces the long exact sequence in cohomology that is isomorphic to the integral cohomology Bockstein sequence:
\[ \cdots \to H^p(X,\underline{\mathbb{R}}/\mathbb{Z}) \xrightarrow{\;\delta\;} H^{p+1}(X,\underline{\mathbb{Z}}) \to H^{p+1}(X,\underline{\mathbb{R}}) \to \cdots.\] 
Conclude that $H^p(X,\underline{\mathbb{R}}/\mathbb{Z})$ is isomorphic to the torsion subgroup of $H^{p+1}(X,\mathbb{Z})$ (primary decomposition of integral cohomology).
\end{exercise}

Sheaf cohomology provides a powerful and general way to handle cohomological questions across topology, geometry, and algebra. It formalizes the passage from local to global and measures the obstructions encountered. The theory is rich with algebraic tools (spectral sequences, etc.) and geometric interpretations (via examples like line bundles, divisor class groups, etc.).

\chapter{Prospectus}

\section{Introduction and Background}

\subsection{Motivation}
Whether analyzing financial markets, social networks, or multi-agent AI systems, our tools for capturing interactions between agents, information flow, and emergent properties remain frustratingly limited. The gap is particularly evident when trying to maintain computational efficiency while preserving theoretical rigor. My research tackles this problem by turning to sheaf theory – a branch of mathematics explicitly designed to relate local and global properties.

The insight driving this research comes from a surprising observation: many problems in economics and reinforcement learning share a fundamental characteristic. Local decisions made by individual agents with limited information collectively determine global system behavior. This mirrors precisely the mathematical domain where sheaf theory excels – analyzing how local data and constraints generate global structures. While economists, computer scientists, and mathematicians have all circled these problems, they've largely done so independently, missing opportunities for cross-pollination.

Sheaf theory has already demonstrated its power in mathematics and physics, but its application to computational contexts remains surprisingly underdeveloped. The time is ripe to bridge this gap, particularly for multi-agent and economic systems where its local-to-global perspective offers a natural fit. I'm proposing to develop a formalized sheaf-theoretic approach to modeling complex systems of interacting agents, with explicit applications to economic markets and reinforcement learning frameworks.

\subsection{Related Work}
My research builds on several pioneering efforts spanning mathematics, computer science, and economics:

\begin{itemize}
    \item Sterling's Agda implementation of sheaves has broken ground in constructive sheaf semantics, but hasn't been extended to multi-agent systems
    \item Escardo's type-topology library in univalent foundations provides essential tools for homotopy type theory that can inform our approach
    \item Recent work in compositional game theory has revealed the potential of category-theoretic methods in economics, though without leveraging sheaf cohomology
    \item Advances in languages like Agda and Idris have made formal verification of complex mathematical structures increasingly feasible
\end{itemize}

My preliminary research suggests a striking gap – despite sheaf cohomology's natural fit for multi-agent systems, direct applications to economics or reinforcement learning remain scarce. This presents both a challenge and an opportunity to develop novel frameworks with significant theoretical and practical impact.

\section{Research Questions and Objectives}

My research confronts four questions:

\begin{enumerate}
    \item What novel insights emerge when we examine multi-agent systems through the lens of sheaf cohomology? Can we characterize emergent properties that traditional approaches miss?
    
    \item How can we effectively represent economic and reinforcement learning systems as sheaves? What's the optimal mapping between system components and sheaf-theoretic constructs?
    
    \item How can we formalize these sheaf-theoretic representations in dependently typed languages to create verified computational foundations?
    
    \item How do we translate these formalizations into computationally efficient implementations that scale to real-world applications?
\end{enumerate}

To address these questions, I'll pursue five core objectives:

\begin{enumerate}
    \item Develop a cohomological interpretation of system properties that reveals new insights about emergent behaviors in multi-agent systems
    
    \item Establish formal mappings between multi-agent system components and sheaf-theoretic constructs, with specific instantiations for economic modeling and reinforcement learning
    
    \item Create a comprehensive formalization of the relevant sheaf structures in a dependently typed language
    
    \item Implement efficient computational methods for working with these structures, exploring parallel evaluation using interaction combinators
    
    \item Demonstrate practical applications through concrete case studies in economics and reinforcement learning
\end{enumerate}

\section{Methodology and Research Plan}

I'll tackle this ambitious agenda through five phases:

\subsection{Phase 1: Survey and Analysis of Existing Formalizations (6 months)}
I'll begin with a deep dive into:
\begin{itemize}
    \item Formalizations in category theory, sheaf theory, and topology
    \item Sheaf neural networks (Hansen and Gebhart)
    \item Persistent homology and Topological Data Analysis (Carlsson)
    \item Information theory through group cohomology (Vigneaux)
    \item Applications of sheaf theory to multi-agent systems
    \item Implementations in dependently typed languages
    \item Parallel evaluation architectures like Taelin's HVM2
\end{itemize}

I'll critically evaluate how these disparate approaches might complement or challenge each other. I'll start by immersing myself in Hansen and Gebhart's work on sheaf neural networks, Carlsson's pioneering contributions to persistent homology, and Vigneaux's information-theoretic models using group cohomology. Beyond surveying existing work, I'll assess implementations like Sterling's sheaf formalization and Escardo's type-topology library to identify leverageable components. I'm particularly interested in exploring how Taelin's interaction combinator approach might revolutionize parallel evaluation of sheaf cohomology computations.

\subsection{Phase 2: Identification of Target Applications (6 months)}
Armed with theoretical foundations from Phase 1, I'll identify high-impact applications:
\begin{itemize}
    \item Map economic modeling and reinforcement learning challenges that could benefit from sheaf-theoretic approaches
    \item Identify problems where local-to-global properties are crucial (market interactions, information aggregation, etc.)
    \item Develop preliminary sheaf-theoretic formulations
    \item Establish evaluation criteria for successful formulations
\end{itemize}

This phase will yield concrete target problems in economics and reinforcement learning that will drive my subsequent formalization work. I'll focus on applications where sheaf representations offer genuine insights or computational advantages that traditional approaches miss.

\subsection{Phase 3: Sheaf-Theoretic Modeling Framework (12 months)}
Building on my work in Phases 1 and 2, I'll develop a comprehensive framework for modeling multi-agent systems using sheaf theory:

\begin{itemize}
    \item Map system components (agents, information, decisions) to sheaf-theoretic constructs
    \item Define mechanisms by which global properties emerge from local behaviors through sheaf cohomology
    \item Formalize information flows and decision-making as sheaf morphisms
    \item Develop specific models for the target applications identified earlier
\end{itemize}

This phase will yield a formal mathematical framework establishing the "isomorphism" between multi-agent systems and sheaf-theoretic constructs, with concrete instantiations for my target applications.

\subsection{Phase 4: Formalization in Dependently Typed Languages (6 months)}
I'll implement the mathematical framework from Phase 3 in a dependently typed language:

\begin{itemize}
    \item Use the Curry-Howard isomorphism to encode sheaves and related categorical constructs in Agda or Idris
    \item Verify critical mathematical properties through formal proofs
    \item Implement instances of the framework for target applications
    \item Establish foundations for computational experiments
\end{itemize}

\subsection{Phase 5: Computational Implementation (6 months)}
I'll translate theoretical formalization into efficient implementations, exploring several approaches:

\begin{enumerate}
    \item \textbf{Sheaf cohomology implementation}: Algorithms for computing sheaf cohomology over multi-agent systems
    
    \item \textbf{Persistent homology / TDA approach}: Topological data analysis methods as alternatives or complements to sheaf cohomology
    
    \item \textbf{Information-theoretic implementations}: Approaches based on Vigneaux's group cohomology framework for Shannon entropy
    
    \item \textbf{HVM compilation}: Pathways to interaction combinators for efficient parallel evaluation via Taelin's HVM2 architecture
    
    \item \textbf{Hybrid approach}: Agda formalization with optimized Python implementations
\end{enumerate}

This phase involves comparative evaluation of these approaches, assessing computational efficiency, theoretical fidelity, and applicability to my target problems. While I'll emphasize the HVM approach for its parallel computation benefits, I'll also explore how persistent homology and information-theoretic implementations might offer unique advantages for specific domains.

\subsection{Phase 6: Application and Evaluation (6 months)}
I'll apply my framework to concrete problems in:

\begin{itemize}
    \item Economic modeling, potentially focusing on micro-economy integration or market actor connections
    \item Reinforcement learning systems, emphasizing incentives, policy formulation, and metrics
    \item Evaluation of effectiveness, computational efficiency, and theoretical insights
\end{itemize}

\section{Expected Contributions}

My research aims to make five significant contributions:

\begin{enumerate}

    \item Novel theoretical mappings between multi-agent systems and sheaf-theoretic constructs – providing new ways to understand complex systems
    
    \item A verified formalization of sheaves and sheaf cohomology in dependently typed languages – bridging pure mathematics and computational implementation
    
    \item Efficient computational implementations suitable for practical applications – moving beyond theoretical models to usable tools
    
    \item Demonstrated applications in economics and reinforcement learning – showing real-world relevance
    
    \item A foundation for future work on category-theoretic approaches to complex systems – opening new research directions
\end{enumerate}

The core scientific contribution will be establishing an "isomorphism" between specific applications and concepts in sheaf cohomology, enabling rigorous representation of economic and reinforcement learning systems in the language of sheaves.

\section{Feasibility and Focus}

I recognize the ambitious scope of this research. To make it tractable, I'll:

\begin{itemize}
    \item Identify the highest-impact subset of the broader conceptual schema with both theoretical significance and practical computational importance
    
    \item Leverage existing formalizations wherever possible to avoid reinventing the wheel
    
    \item Establish clear evaluation criteria for each phase to maintain focus
    
    \item Remain flexible about implementation approaches based on technical feasibility
\end{itemize}

While the complete integration of sheaf theory into multi-agent system modeling represents my ultimate vision, this PhD will focus on establishing the foundational framework and demonstrating its application in selected high-value contexts. I'll identify promising directions for future exploration beyond the scope of this dissertation.

\section{Timeline}

\begin{itemize}
    \item \textbf{Year 1:} Complete Phases 1 and 2
        \begin{itemize}
            \item Months 1-6: Survey and analysis of existing formalizations
            \item Months 7-12: Identification of target applications in economics and RL
        \end{itemize}
    \item \textbf{Year 2:} Complete Phase 3 and begin Phase 4
        \begin{itemize}
            \item Months 1-12: Development of sheaf-theoretic modeling framework
            \item Months 10-12: Begin formalization in dependently typed languages
        \end{itemize}
    \item \textbf{Year 3:} Complete Phases 4, 5, and 6, thesis writing
        \begin{itemize}
            \item Months 1-3: Complete formalization in dependently typed languages
            \item Months 4-9: Computational implementation and application
            \item Months 7-12: Evaluation and thesis writing
        \end{itemize}
\end{itemize}

\section{Conclusion}

This proposal charts a course through the unexplored territory where sheaf theory meets multi-agent systems. By bridging abstract mathematics with practical computational implementations, I aim to create a conceptual framework and technological toolset capable of transforming how we simulate and understand complex systems. If successful, this research will not only advance theoretical understanding but also provide practical tools for modeling, analyzing, and optimizing multi-agent systems across domains – from economic markets to artificial intelligence.

The increasing complexity of our world demands new mathematical frameworks. Sheaf theory, with its emphasis on local-to-global relationships, offers a promising path forward. I believe this research represents an opportunity to make a lasting contribution at the intersection of mathematics, economics, and computer science.

\chapter{Literature Review}

\section{Background and Motivation}
Multi-agent systems---collections of autonomous agents that must coordinate their actions---are frequently modeled using graphs. In a graph-based model, each agent is represented as a node and pairwise interactions or constraints between agents are represented as edges. Such graph models underpin many classical coordination algorithms: for instance, consensus protocols, distributed optimization, formation control, and flocking behaviors all typically assume an underlying communication graph among agents. Graph abstractions have proven effective for relatively homogeneous systems and simple interaction rules.

However, as we push toward more complex and heterogeneous multi-agent systems, graph models alone encounter limitations. A simple graph can only indicate which agents are connected, not the detailed \emph{type} of interaction or constraint on each connection. In many scenarios we need to encode high-dimensional data at nodes (agent states) and more complex relationships along edges (e.g. transformations, constraints, or tasks that relate the data of two agents). For example, in a team of heterogeneous robots, an edge between a ground robot and an aerial drone might carry a different kind of relationship (say, a sensor alignment or a communication relay constraint) than an edge between two ground robots. A standard graph with scalar edge weights cannot fully capture this difference in interaction structure.

To address these challenges, recent research has proposed using \textbf{cellular sheaves} to generalize graph-based coordination models. A cellular sheaf is a construct from algebraic topology that can be thought of as a graph augmented with structured data attached to nodes and edges, together with consistency maps that govern the relationships between node data and edge data. This topological framework allows encoding of richer relational structure among agents and subsystems than a simple graph alone can accommodate. By using sheaves, one can incorporate vector spaces of states at each node (potentially of varying dimensions for different agents) and encode constraints or transformation laws on each edge via linear maps between these vector spaces. Intuitively, the sheaf assigns local data spaces and specifies how local sections of data must agree on overlaps (edges), thereby generalizing the notion of "consensus" to more complex consistency conditions.

The motivation for studying multi-agent coordination on sheaves is to enable advanced decentralized coordination in modern applications like drone swarms, sensor networks, and heterogeneous robotic teams, where agents might have different roles and capabilities. In such settings, enforcing coordination involves maintaining certain consistency or alignment between high-dimensional state information, not just matching scalar values. The sheaf-theoretic approach provides a language to rigorously describe these consistency conditions and to design algorithms that ensure they are met. Recent work by Hanks \emph{et al.} introduced a general framework for heterogeneous multi-agent coordination using cellular sheaves and a new concept called the \textbf{nonlinear sheaf Laplacian}, which extends the familiar graph Laplacian to the sheaf setting and incorporates nonlinear interaction potentials \footnote{Tyler Hanks et al., "Distributed Multi-Agent Coordination over Cellular Sheaves," \textit{arXiv preprint arXiv:2504.02049} (2025). This paper introduces the cellular sheaf framework for coordination and the nonlinear sheaf Laplacian, using it to unify tasks like consensus, formation, and flocking under a single optimization framework.}.

In parallel, Riess \cite{RiessThesis2023} explored a related line of inquiry in his thesis on lattice-theoretic multi-agent systems, developing a \emph{discrete Hodge theory} for certain lattice-valued sheaves, including a Laplace operator (dubbed the \emph{Tarski Laplacian}) that generalizes the graph Laplacian to ordered or non-linear data domains\footnote{Hans Riess, \textit{Lattice Theory in Multi-Agent Systems} (Ph.D. thesis, University of Pennsylvania, 2023). Riess develops a Laplacian for lattice-valued (order-theoretic) sheaves and proves a Hodge-type theorem (the "Hodge-Tarski theorem") relating its fixed points to global sections of the sheaf. Chapter 5 of his thesis provides an introduction to sheaf theory from a combinatorial perspective, motivating the use of sheaves in multi-agent systems.}. These efforts reflect a broader trend of leveraging topological methods (sheaves, cohomology, Hodge theory) in the analysis of complex networks and coordination problems.

The use of cellular sheaves in multi-agent coordination promises:
\begin{itemize}
    \item \textbf{Richer Modeling Capability:} We can attach vector spaces (state spaces) to each agent and each interaction, allowing heterogeneity in what information is carried by different parts of the network. Constraints between agents can be represented by linear maps on these vector spaces, capturing directionality or transformations in the interaction.
    \item \textbf{Generalized Consensus and Coordination:} Classical consensus requires all agents' states (often scalars) to agree. Using sheaves, one can enforce more general consistency conditions (for instance, agents' states might need to satisfy a linear equation or align through a rotation matrix on each edge) rather than simple equality. This broadens the type of coordination objectives we can formalize.
    \item \textbf{Decentralized Algorithms:} The sheaf formalism, combined with tools like the sheaf Laplacian, yields natural generalizations of diffusion or gossip algorithms. As we will see, a \emph{sheaf Laplacian flow} can drive the system toward satisfying edge consistency constraints just as the graph Laplacian flow drives a network to consensus. Moreover, by formulating coordination as an optimization problem (a \emph{homological program}) on the sheaf, we can apply powerful distributed optimization methods (like ADMM) to solve it.
    \item \textbf{Connections to Learning:} Interestingly, these sheaf-based processes have analogies to recent Graph Neural Network (GNN) architectures. In a GNN, each layer updates node feature vectors by aggregating transformed neighbor features. In a sheaf-based coordination algorithm, each iteration updates agent states using neighbors' information and certain linear maps (the sheaf restrictions). This parallel suggests that \emph{sheaf neural networks} could harness the additional structure of sheaves to improve learning on graphs with complex relationships\footnote{See, e.g. "Sheaf Neural Networks," \textit{arXiv:2012.06333} (2020), and Cristian Bodnar et al., "Neural Sheaf Diffusion: A Topological Perspective on Heterophily and Oversmoothing," \textit{International Conference on Learning Representations (ICLR)} 2023. These works introduce neural network layers based on sheaf Laplacians, demonstrating advantages in handling directed or signed relationships (heterophily) on graphs.}.
\end{itemize}

The remainder of this paper is organized as follows. In Section 2, we introduce the key mathematical concepts: graphs, cellular sheaves on graphs, and sheaf Laplacians. We provide definitions and basic properties, drawing on established literature for a gentle introduction. Section 3 revisits the classical graph Laplacian and consensus dynamics, to build intuition for the sheaf generalization. Section 4 then presents the sheaf Laplacian in more detail and interprets its effect as a decentralized flow for enforcing consistency (a generalized consensus). Section 5 lays out a general framework for formulating multi-agent coordination problems using sheaves, including how to encode various coordination tasks. Section 6 summarizes the main contributions and benefits of this framework in comparison to classical approaches. In Section 7, we formally define the \emph{homological program} for multi-agent coordination, which is an optimization problem set up on a cellular sheaf. Section 8 details how to solve such programs using ADMM in a distributed manner; we derive the ADMM algorithm step by step and discuss its convergence. Section 9 draws a connection between the iterative solution (which can be seen as message-passing on the network) and neural network architectures, effectively viewing each iteration as a "layer" of a sheaf neural network. Based on this view, Section 10 discusses the advantages of incorporating sheaves into neural network models for learning, such as the ability to handle asymmetric relations and nonlinear propagation. Finally, we conclude with a brief discussion of future directions and applications.

\section{Key Concepts and Mathematical Foundations}

\subsection{Graphs and Data on Graphs}
We begin by reviewing the notion of a graph and the idea of attaching data to a graph. For our purposes, a graph $G$ is a pair $(V, E)$ consisting of a set of vertices (or nodes) $V$ and a set of edges $E$. Throughout, we will consider undirected graphs (suitable for modeling bidirectional communication between agents), and we assume $G$ is simple (no multiple edges between the same pair of vertices, and no self-loops). An edge $e \in E$ connecting vertices $i$ and $j$ will be denoted $e = \{i,j\}$ or simply $i$--$j$. We write $i \sim j$ if $\{i,j\} \in E$. Each vertex $i$ has a set of neighbors $N_i = \{\, j \in V : \{i,j\}\in E\,\}$.

In many applications, one associates data to the nodes or edges of a graph. For example, in a multi-agent system, each node $i$ might have an associated state vector $x_i \in \mathbb{R}^k$ describing the agent's internal variables (position, velocity, etc.), and each edge $e = \{i,j\}$ might represent a relationship or measurement involving agents $i$ and $j$. 

In the context of cellular sheaves on graphs, we can formalize these data assignments using terminology adapted from algebraic topology:

- A 0-\emph{cochain} on $G$ is an assignment of a vector to each vertex. If each vertex $i$ is assigned a vector $x_i \in \mathbb{R}^k$, we view $x = \{x_i\}_{i \in V}$ as a 0-cochain (with values in $\mathbb{R}^k$).

- A 1-\emph{cochain} on $G$ is an assignment of a vector to each edge. For instance, $y = \{y_e\}_{e \in E}$ with $y_e \in \mathbb{R}^m$ for each edge $e$ would be a 1-cochain (with values in $\mathbb{R}^m$).

If the vector spaces (like $\mathbb{R}^k$ or $\mathbb{R}^m$) are the same for all vertices or all edges, we call this a \emph{constant} assignment of data dimensions. In more general cases, one might have different types or dimensions of data at different nodes or edges. The mathematical structure that allows this generality (varying data spaces attached to different parts of a graph) is precisely a \emph{sheaf}, which we define next.

\subsection{Cellular Sheaves on Graphs}
Informally, a sheaf can be thought of as a way to attach data (or data spaces) to each part of a space (in our case, to each vertex and edge of a graph) along with consistency requirements that specify how data on neighboring parts must relate. While sheaves originated in algebraic geometry and topology for continuous spaces, here we focus on \emph{cellular sheaves on graphs}, which are a discrete version suitable for network systems \footnote{For an accessible introduction to cellular sheaves on graphs, see Jakob Hansen, "A gentle introduction to sheaves on graphs" (2020), available as a preprint. This source provides definitions and simple examples, oriented towards applications in engineering and network science.}.

\begin{definition}[Cellular Sheaf on a Graph]\label{def:sheaf}
Let $G=(V,E)$ be a graph. A \textbf{sheaf $\mathcal{F}$ on $G$} (sometimes called a \emph{cellular sheaf} when $G$ is viewed as a 1-dimensional cell complex) consists of the following:
\begin{itemize}
    \item For each vertex $i \in V$, a vector space $\mathcal{F}(i)$ over some field (here we take all vector spaces to be real Euclidean spaces). We call $\mathcal{F}(i)$ the \emph{stalk} of the sheaf at vertex $i$. Intuitively, $\mathcal{F}(i)$ is the data space associated with node $i$.
    \item For each edge $e=\{i,j\} \in E$, a vector space $\mathcal{F}(e)$ (the stalk at $e$). This is the data space associated with the interaction or relation between $i$ and $j$.
    \item For each incident pair $(i,e)$ with $i \in V$, $e \in E$, and $i$ incident to $e$ (meaning $i$ is an endpoint of edge $e$), a linear map 
    \[
    \mathcal{F}_{i \to e}: \mathcal{F}(i) \;\to\; \mathcal{F}(e),
    \] 
    called a \emph{restriction map}. If $e=\{i,j\}$, there are two such maps: $\mathcal{F}_{i\to e}$ mapping data from node $i$ to the edge $e$, and $\mathcal{F}_{j\to e}$ mapping data from node $j$ to the edge $e$. These maps specify how the local data at a node should be transformed or projected to be compatible with the data on the connecting edge.
\end{itemize}
A sheaf $\mathcal{F}$ assigns a vector space to each vertex and edge of $G$, and assigns to each inclusion of a vertex in an edge a linear map between the corresponding vector spaces.
\end{definition}

We often denote a sheaf by listing the data assignments, e.g. $\mathcal{F}(i)$ for vertices and $\mathcal{F}(e)$ for edges, with the understanding that the restriction maps $\mathcal{F}_{i\to e}$ are part of the data of the sheaf. The collection $\{\mathcal{F}_{i\to e}\}$ encodes the \textbf{consistency requirements} of the sheaf: a section of the sheaf (an assignment of data to every vertex and edge) is "valid" (a \emph{global section}) if whenever a vertex $i$ and an incident edge $e$ are considered, the data on $i$ matches the data on $e$ via the restriction map. Formally, a \textbf{global section} of $\mathcal{F}$ is a pair of families $(\{x_i \in \mathcal{F}(i)\}_{i\in V}, \{y_e \in \mathcal{F}(e)\}_{e\in E})$ such that for every edge $e=\{i,j\}$, we have 
\[
\mathcal{F}_{i\to e}(x_i) = y_e \quad \text{and}\quad \mathcal{F}_{j\to e}(x_j) = y_e.
\] 
In other words, the data $y_e$ on edge $e$ is the image of the data $x_i$ on vertex $i$ under the map to the edge, and similarly from the $j$ side. Equivalently $\mathcal{F}_{i\to e}(x_i) = \mathcal{F}_{j\to e}(x_j)$ for all edges $e=\{i,j\}$. This is the \textbf{local consistency condition} enforced by the sheaf.

If a choice of vertex data $\{x_i\}$ can be extended to some edge data $\{y_e\}$ satisfying these conditions, then $\{x_i\}$ is said to be a \emph{section at the vertices} (or 0-cochain) that is consistent. If no such $\{y_e\}$ exists, then $\{x_i\}$ has some inconsistency across at least one edge.

\begin{example}[Constant Sheaf]\label{ex:constant-sheaf}
A basic example is the \textbf{constant sheaf} on $G$ with fiber $\mathbb{R}^k$. This sheaf, denoted perhaps $\underline{\mathbb{R}^k}$, is defined by taking $\mathcal{F}(i) = \mathbb{R}^k$ for every vertex $i$ and $\mathcal{F}(e) = \mathbb{R}^k$ for every edge $e$, and letting each restriction map $\mathcal{F}_{i\to e}$ be the identity map on $\mathbb{R}^k$. In words, each node and each edge hold a $k$-dimensional vector, and the consistency condition is that for an edge $e=\{i,j\}$, the vectors at $i$, $j$, and $e$ must all agree (since $\mathcal{F}_{i\to e}$ and $\mathcal{F}_{j\to e}$ just send a vector to itself). A global section of the constant sheaf is thus an assignment of a vector $x_i \in \mathbb{R}^k$ to each node such that $x_i = x_j$ for every edge $\{i,j\}$. If the graph $G$ is connected, the only global sections are those where all nodes have the same vector (i.e. $x_1 = x_2 = \cdots = x_n$), which is exactly the usual consensus condition.
\end{example}

\begin{example}[Orientation Sheaf or Sign Sheaf]
Another instructive example is a sheaf that assigns $\mathbb{R}$ to each vertex and edge (so scalar data everywhere), but where the restriction maps might be $\pm 1$ instead of always $+1$. For instance, suppose we have a sheaf $\mathcal{F}$ with $\mathcal{F}(i)=\mathbb{R}$ for all $i$, $\mathcal{F}(e)=\mathbb{R}$ for all $e$, and define 
\[
\mathcal{F}_{i\to e} = 1 \quad \text{and}\quad \mathcal{F}_{j\to e} = -1
\] 
for an edge $e=\{i,j\}$. This means from the perspective of agent $i$, the edge's value should equal $x_i$, but from the perspective of agent $j$, the edge's value should equal \emph{minus} $x_j$. The consistency condition becomes $y_e = x_i = -x_j$. If we interpret $x_i$ as some scalar measurement at node $i$, this sheaf is encoding that measurements at $i$ and $j$ should be negatives of each other across the edge $e$. This could represent, e.g., a scenario where two agents must hold opposite voltages or opposite opinions. A global section here means an assignment of scalars to all nodes such that whenever $i$ and $j$ are connected, $x_j = -x_i$. On a graph that is a single edge with two vertices, the global sections are those pairs $(x_1,x_2)$ where $x_2=-x_1$. On a triangle graph, a global section would require $x_1=-x_2, x_2=-x_3, x_3=-x_1$, which forces $x_1=-x_2, x_2=-x_3, x_3 = x_1$ (by substituting), implying $x_1 = -x_2 = x_3 = -x_1$, so $x_1$ must be zero; thus only the zero assignment is a global section in that case (the sheaf was too restrictive to have a non-zero global solution on a cycle of odd length).
\end{example}

These examples illustrate that by adjusting the restriction maps, we can enforce different kinds of consistency: exact equality (as in the constant sheaf), equality up to a sign (orientation sheaf), or in general equality up to some linear transformation. In the most general case, if $\mathcal{F}_{i\to e}: \mathcal{F}(i)\to\mathcal{F}(e)$ is not surjective, the condition $\mathcal{F}_{i\to e}(x_i) = \mathcal{F}_{j\to e}(x_j)$ can be thought of as a system of linear equations relating $x_i$ and $x_j$. Sheaf theory provides a language to talk about solutions to all these local constraints collectively.

Before proceeding, we introduce some notation from algebraic topology that is useful to work with sheaves algebraically:
\begin{itemize}
    \item A \textbf{0-cochain} valued in the sheaf $\mathcal{F}$ is a choice of a vector $x_i \in \mathcal{F}(i)$ for each vertex $i$. We denote the space of all 0-cochains by 
    \[
    C^0(G;\mathcal{F}) := \bigoplus_{i\in V} \mathcal{F}(i).
    \] 
    This is essentially the direct product of all node stalks, i.e. the space of all assignments of node data.
    \item A \textbf{1-cochain} valued in $\mathcal{F}$ is a choice of a vector $y_e \in \mathcal{F}(e)$ for each edge $e$. The space of 1-cochains is 
    \[
    C^1(G;\mathcal{F}) := \bigoplus_{e\in E} \mathcal{F}(e).
    \]
    \item The \textbf{coboundary operator} (or restriction map at the level of cochains) is a linear map 
    \[
    \delta_{\mathcal{F}}: C^0(G;\mathcal{F}) \to C^1(G;\mathcal{F}),
    \] 
    defined by how it acts on a 0-cochain $x = \{x_i\}_{i\in V}$. For each edge $e=\{i,j\}$, the edge component of $\delta_{\mathcal{F}} x$ is given by 
    \[
    (\delta_{\mathcal{F}} x)_e = \mathcal{F}_{i\to e}(x_i) - \mathcal{F}_{j\to e}(x_j).
    \] 
    Here we must choose an orientation for each undirected edge to give a sign (we can pick an arbitrary orientation for each edge for the sake of this definition; the resulting $\delta_{\mathcal{F}}$ will depend on that choice but different choices are isomorphic). Intuitively, $\delta_{\mathcal{F}} x$ measures the \emph{disagreement} between the data at $i$ and $j$ when projected onto the edge $e$.
\end{itemize}

With this operator, the condition for $x$ to be extendable to a global section (with some $y$ on edges) is precisely $\delta_{\mathcal{F}} x = 0$. If $\delta_{\mathcal{F}} x = 0$, we say $x$ is a \textbf{cocycle} or that $x$ is \emph{closed} (borrowing terminology from cohomology theory). In plain terms, $\delta_{\mathcal{F}} x = 0$ means that for every edge $e=\{i,j\}$, $\mathcal{F}_{i\to e}(x_i) - \mathcal{F}_{j\to e}(x_j) = 0$, i.e., $\mathcal{F}_{i\to e}(x_i)=\mathcal{F}_{j\to e}(x_j)$. So $x$ is a section at the vertices that is consistent on all edges. Such an $x$ exactly corresponds to a global section of the sheaf (taking $y_e = \mathcal{F}_{i\to e}(x_i)$ for each edge $e$ yields a globally consistent assignment).

We call 1-cochains in the image of $\delta_{\mathcal{F}}$ \emph{exact} 1-cochains. These represent edge data that come from some assignment of node data. The kernel of $\delta_{\mathcal{F}}$ (i.e., all 0-cochains $x$ such that $\delta_{\mathcal{F}} x=0$) are the \emph{closed} 0-cochains, which correspond to global sections. In cohomological terms, $H^0(G;\mathcal{F}) := \ker(\delta_{\mathcal{F}})$ is the space of global sections of the sheaf $\mathcal{F}$. Since there is no 2-cochain space in a graph, the coboundary operator out of $C^1$ is zero, so its kernel is all of $C^1$. Therefore, $H^1(G;\mathcal{F}) := \frac{C^1(G;\mathcal{F})}{\text{im}(\delta_{\mathcal{F}})}$ measures the obstructions to being a global section (i.e., it represents cohomology classes of edge assignments). We will not delve deeply into cohomology here, but this viewpoint is useful in understanding certain results like Hodge decompositions.

To summarize, a sheaf on a graph gives rise to a linear operator $\delta_{\mathcal{F}}$ that measures local inconsistencies. This is analogous to the incidence matrix of a graph in classical graph theory: for a simple graph (with trivial sheaf), $\delta$ corresponds to an oriented incidence matrix, and $\delta x = 0$ means a scalar function $x$ on vertices is constant on each connected component. The sheaf case generalizes this by including linear transformations in the incidence relations.

\subsection{Sheaf Laplacians: Linear Case}
Given a sheaf $\mathcal{F}$ on $G$, we can construct an operator analogous to the graph Laplacian, but now acting on sections of the sheaf. The \textbf{sheaf Laplacian} is defined in a way similar to the combinatorial (graph) Laplacian, using the coboundary operator $\delta_{\mathcal{F}}$.

\begin{definition}[Sheaf Laplacian]
For a sheaf $\mathcal{F}$ on $G$, the (0-dimensional) \textbf{sheaf Laplacian} $L_{\mathcal{F}}: C^0(G;\mathcal{F}) \to C^0(G;\mathcal{F})$ is defined by 
\[
L_{\mathcal{F}} := \delta_{\mathcal{F}}^T \, \delta_{\mathcal{F}},
\] 
where $\delta_{\mathcal{F}}^T: C^1(G;\mathcal{F}) \to C^0(G;\mathcal{F})$ is the adjoint (transpose) of $\delta_{\mathcal{F}}$ with respect to the standard inner products on the Euclidean stalks (assuming each stalk $\mathcal{F}(i)$ and $\mathcal{F}(e)$ is an inner product space). 
\end{definition}

In coordinates, if we choose an orientation for each edge and bases for each stalk, $\delta_{\mathcal{F}}$ can be represented as a (matrix) operator mapping the vector of all node values to the vector of all edge differences. Then $L_{\mathcal{F}}$ is represented by the matrix $B^T B$ where $B$ is the matrix of $\delta_{\mathcal{F}}$. Thus $L_{\mathcal{F}}$ is symmetric positive semi-definite by construction (since $B^T B$ always is). More concretely, $L_{\mathcal{F}}$ acts on a 0-cochain $x = \{x_i\}_{i\in V}$ as follows: the $i$-th component $(L_{\mathcal{F}} x)_i$ is 
\[
(L_{\mathcal{F}} x)_i = \sum_{e : e=\{i,j\}\in E} \mathcal{F}_{i\to e}^T\big( \mathcal{F}_{i\to e}(x_i) - \mathcal{F}_{j\to e}(x_j) \big).
\]
This formula resembles the graph Laplacian in that it is a sum over neighbors $j$ of differences $x_i - x_j$, but here each difference is mapped into the edge space via $\mathcal{F}_{i\to e}$, and then mapped back via the adjoint of $\mathcal{F}_{i\to e}$. The result $(L_{\mathcal{F}}x)_i$ lies in the space $\mathcal{F}(i)$ (the same space as $x_i$), as expected for a Laplacian on the node space.

A convenient block-matrix way to write $L_{\mathcal{F}}$ is to arrange all node variables $x_i$ into a vector $x=(x_1;\ldots;x_{|V|})$ and similarly $y=(y_e)_{e\in E}$ for edge variables. Then $\delta_{\mathcal{F}}$ can be seen as a matrix with blocks $\mathcal{F}_{i\to e}$ and $-\mathcal{F}_{j\to e}$ in the rows corresponding to edge $e$ and columns corresponding to vertices $i$ and $j$. 

For vertices $i,j \in V$, define the block entries of the Laplacian matrix as:
\[
[L_{\mathcal{F}}]_{ij} = 
\begin{cases}
\sum_{e \ni i} \mathcal{F}_{i\to e}^T \mathcal{F}_{i\to e} & \text{if } i = j, \\
-\mathcal{F}_{i\to e}^T \mathcal{F}_{j\to e} & \text{if } e = \{i,j\} \in E, \\
0 & \text{otherwise}.
\end{cases}
\]

This generalizes the familiar formula for the graph Laplacian matrix $L = D - A$. In the special case where each $\mathcal{F}_{i\to e}$ is the identity (and scalar), $\mathcal{F}_{i\to e}^T \mathcal{F}_{i\to e} = 1$ and $\mathcal{F}_{i\to e}^T \mathcal{F}_{j\to e} = 1$, making $[L_{\mathcal{F}}]_{ii} = \deg(i)$ and $[L_{\mathcal{F}}]_{ij} = -1$ for adjacent vertices $i,j$. Indeed, we have:

\begin{lemma}[Recovery of Graph Laplacian]\label{lem:trivial-sheaf-laplacian}
If $\mathcal{F}$ is the constant sheaf on $G$ with 1-dimensional stalks and identity restriction maps (as in Example \ref{ex:constant-sheaf}), then the sheaf Laplacian $L_{\mathcal{F}}$ is exactly the ordinary graph Laplacian $L_G$ on $G$. That is, $(L_{\mathcal{F}}x)_i = \sum_{j: j\sim i}(x_i - x_j)$ for $x \in C^0(G;\mathcal{F})$.
\end{lemma}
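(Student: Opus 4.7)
The plan is to verify the statement by direct substitution into the explicit formula for the sheaf Laplacian derived just before the lemma, since in the constant 1-dimensional setting every restriction map is the scalar $1$ and therefore all transposes collapse trivially.

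First I would unpack the hypotheses: for the constant sheaf with $1$-dimensional stalks, $\mathcal{F}(i) = \mathbb{R}$ for all $i \in V$, $\mathcal{F}(e) = \mathbb{R}$ for all $e \in E$, and each $\mathcal{F}_{i \to e}$ is the identity map on $\mathbb{R}$, so $\mathcal{F}_{i \to e}^T = 1$ as well. Then I would substitute these into the coordinate formula
\[
(L_{\mathcal{F}} x)_i \;=\; \sum_{e \,:\, e=\{i,j\}\in E} \mathcal{F}_{i\to e}^T\bigl(\mathcal{F}_{i\to e}(x_i) - \mathcal{F}_{j\to e}(x_j)\bigr),
\]
which immediately simplifies to $\sum_{j \sim i}(x_i - x_j)$. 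This is visibly the standard combinatorial graph Laplacian applied to $x$, i.e.\ $(L_G x)_i = \deg(i)\,x_i - \sum_{j \sim i} x_j$.

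As a sanity check, I would also verify the block-matrix description: under the stated hypotheses $\mathcal{F}_{i\to e}^T \mathcal{F}_{i\to e} = 1$ and $\mathcal{F}_{i\to e}^T \mathcal{F}_{j\to e} = 1$, so the diagonal block $[L_{\mathcal{F}}]_{ii}$ becomes $\sum_{e \ni i} 1 = \deg(i)$, the off-diagonal block $[L_{\mathcal{F}}]_{ij}$ equals $-1$ when $\{i,j\} \in E$ and $0$ otherwise. This is precisely the matrix $D - A$ defining $L_G$, confirming the lemma from the global viewpoint as well as the pointwise one.

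The only subtle point, which I would flag explicitly, is the choice of edge orientation entering the definition of $\delta_{\mathcal{F}}$: although $(\delta_{\mathcal{F}} x)_e$ depends on which endpoint is taken with a plus sign, the composite $\delta_{\mathcal{F}}^T \delta_{\mathcal{F}}$ is orientation-independent because flipping the sign on a given edge conjugates the corresponding row of $\delta_{\mathcal{F}}$ by $-1$, which leaves $\delta_{\mathcal{F}}^T \delta_{\mathcal{F}}$ invariant. This is the only part of the argument that is not pure substitution, but it is a standard observation and poses no real obstacle; the proof is otherwise a one-line computation.
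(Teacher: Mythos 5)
Your proposal is correct and follows essentially the same route as the paper's proof: direct substitution of the identity restriction maps into the coordinate formula for $(L_{\mathcal{F}}x)_i$, followed by a check of the block-matrix form $D - A$. The additional remark on orientation-independence of $\delta_{\mathcal{F}}^T\delta_{\mathcal{F}}$ is a welcome clarification the paper leaves implicit, but it does not change the argument.
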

\begin{proof}
Under a trivial sheaf, for each edge $e=\{i,j\}$, $\mathcal{F}_{i\to e}$ and $\mathcal{F}_{j\to e}$ are the identity on $\mathbb{R}$, so $\mathcal{F}_{i\to e}^T$ is also identity. Then 
\[
(L_{\mathcal{F}}x)_i = \sum_{e=\{i,j\}} (1)^T(1\cdot x_i - 1\cdot x_j) = \sum_{j: j\sim i}(x_i - x_j),
\] 
which is precisely the definition of the graph Laplacian applied to the function $x: V\to \mathbb{R}$. In matrix terms, $L_{\mathcal{F}}$ has $L_{ii} = \sum_{e\sim i} 1 = \deg(i)$ and $L_{ij} = -1$ if $i\sim j$, matching the standard Laplacian matrix $L_G = D - A$.
\end{proof}

Thus, sheaf Laplacians are a true generalization of graph Laplacians. One can alternatively think of them as graph Laplacians with a possibly different weight matrix on each edge (represented by the linear maps $\mathcal{F}_{i\to e}$). In spectral graph theory, one sometimes encounters \emph{matrix-weighted graphs} where an edge between $i,j$ is associated with a weight matrix $W_{ij}$ rather than a scalar weight. The sheaf Laplacian is essentially the Laplacian of a matrix-weighted graph, where $W_{ij} = \mathcal{F}_{i\to e}^T \mathcal{F}_{j\to e}$ for edge $e=\{i,j\}$. This viewpoint has been discussed in literature on generalized network Laplacians \footnote{See, Hansen and Ghrist, "Learning Sheaf Laplacians from Smooth Signals," \textit{Proc. ICASSP 2019}, where the goal is to infer sheaf restriction maps from data.}.

Several important properties of the graph Laplacian carry over to the sheaf Laplacian:
\begin{itemize}
    \item $L_{\mathcal{F}}$ is symmetric positive semidefinite. Thus all its eigenvalues are $\lambda \ge 0$.
    
    \item The kernel of $L_{\mathcal{F}}$ (the nullspace) consists of all $x$ such that $L_{\mathcal{F}}x = 0$. Because $L_{\mathcal{F}} = \delta_{\mathcal{F}}^T \delta_{\mathcal{F}}$, we have $x^T L_{\mathcal{F}} x = \|\delta_{\mathcal{F}} x\|^2$. Therefore, $L_{\mathcal{F}} x = 0$ if and only if $\delta_{\mathcal{F}} x = 0$. 
    
    This means that the kernel of the sheaf Laplacian equals the kernel of the coboundary operator:
    \[
    \ker(L_{\mathcal{F}}) = \ker(\delta_{\mathcal{F}})
    \] 
    
    As we've seen, $\ker(\delta_{\mathcal{F}})$ is precisely the space of global sections of the sheaf. For a connected graph with a constant sheaf, this means $\ker(L_G)$ is the one-dimensional space of constant vectors (the classical result for graph Laplacians). For a general sheaf, $\dim \ker(L_{\mathcal{F}})$ could be larger if the sheaf has multiple degrees of freedom for consistency. For example, the sign sheaf on a graph with an even cycle has a two-dimensional space of global sections.
\end{itemize}

The sheaf Laplacian $L_{\mathcal{F}}$ acts on 0-cochains (node assignments). There is also a \emph{1-Laplacian} which would act on 1-cochains ($L^{(1)}_{\mathcal{F}} = \delta \, \delta^T$), but in this paper we primarily care about the 0-Laplacian because it governs how node values evolve to satisfy constraints. In the context of multi-agent coordination, we are typically dealing with states at nodes and constraints at edges, so $L_{\mathcal{F}}$ on nodes is the relevant operator.

\paragraph{Sheaf Laplacian Dynamics.} A useful way to think about $L_{\mathcal{F}}$ is as a generator of a diffusion or consensus-like process. Consider the ODE:
\begin{equation}\label{eq:linear-sheaf-dynamics}
    \dot{x}(t) = -L_{\mathcal{F}}\, x(t),
\end{equation}
where $x(t) \in C^0(G;\mathcal{F})$ is a time-dependent assignment of states to nodes. This is a linear system of ODEs. Expanding $\dot{x}_i = - (L_{\mathcal{F}} x)_i$ using the earlier formula,
\[
\dot{x}_i = -\sum_{e=\{i,j\}} \mathcal{F}_{i\to e}^T\big(\mathcal{F}_{i\to e}(x_i) - \mathcal{F}_{j\to e}(x_j)\big).
\]
This can be interpreted as each agent $i$ adjusting its state $x_i$ in the direction that reduces the inconsistency on each incident edge. Specifically, for each neighbor $j$ of $i$, the term $\mathcal{F}_{i\to e}^T(\mathcal{F}_{i\to e}(x_i) - \mathcal{F}_{j\to e}(x_j))$ is the "force" exerted on $x_i$ due to the inconsistency between $i$ and $j$'s data when viewed on edge $e$. The sum over neighbors accumulates all these corrections. The negative sign indicates $x_i$ moves opposite to the gradient of inconsistency (hence reducing the difference). This is precisely a generalized consensus algorithm: if $x_i$ and $x_j$ are not consistent according to the sheaf, they will exchange information to become more aligned.

Because $L_{\mathcal{F}}$ is positive semidefinite, one can show that $\frac{d}{dt}\|x(t)\|^2 = - x(t)^T L_{\mathcal{F}} x(t) = - \|\delta_{\mathcal{F}} x(t)\|^2 \le 0$. Thus $\|x(t)\|^2$ is non-increasing. 

By LaSalle's invariance principle \cite{Khalil2002}, the system will converge to the largest invariant set where $\frac{d}{dt}\|x(t)\|^2 = 0$, which corresponds to the set where $\|\delta_{\mathcal{F}} x(t)\|^2 = 0$, or equivalently, $\delta_{\mathcal{F}} x(t) = 0$. This set is precisely $\ker(L_{\mathcal{F}})$, the space of global sections of the sheaf.

Therefore, for a connected graph and a sheaf with appropriate properties, the dynamics in equation \eqref{eq:linear-sheaf-dynamics} will converge to a global section $x(\infty)$ such that $L_{\mathcal{F}} x(\infty) = 0$, meaning all edge constraints are satisfied. The specific global section that the system converges to depends on the initial condition $x(0)$ and the dimension of $\ker(L_{\mathcal{F}})$\footnote{If $\ker(L_{\mathcal{F}})$ has dimension greater than 1, there are multiple possible equilibrium points. A simple example is standard consensus, where the system converges to all nodes having the same value, but that common value depends on the initial average.}.

In closing this subsection, we note that all the above has been the \emph{linear} theory of sheaves and Laplacians. The restriction maps $\mathcal{F}_{i\to e}$ were linear and the Laplacian is a linear operator. This suffices for modeling tasks where the constraint relationship between agent states is linear. But many coordination tasks are inherently nonlinear (e.g., maintaining a formation at a certain distance involves a quadratic constraint on positions, flocking involves nonlinear dynamics). To capture that, we move next to the concept of a \textbf{nonlinear homological program}, which introduces nonlinear edge potentials, effectively creating a \emph{nonlinear sheaf Laplacian} in the process.

\subsection{Nonlinear Homological Programs on Sheaves}\label{sec:nonlinear-homological-program}
The term "homological program" comes from earlier work \cite{HansenHomologicalProg2019} where optimization problems were formulated in terms of homology/cohomology of a network (for sensor network coordination). Here we adopt the formulation by Hanks \emph{et al.}\footnote{Hanks et al., 2025 (see previous footnote). In their framework, a \emph{nonlinear homological program} generalizes the earlier linear homological programs by allowing nonlinear objective and constraint functions while still leveraging the sheaf structure.} to define a general optimization problem that a multi-agent coordination task can be translated into. Roughly speaking, a \emph{nonlinear homological program} consists of:
\begin{itemize}
    \item A graph $G=(V,E)$ representing the communication topology among agents.
    \item A sheaf $\mathcal{F}$ on $G$ which specifies the data (state) spaces for each agent and each interaction and how those should match up.
    \item Convex objective functions $f_i: \mathcal{F}(i) \to \mathbb{R} \cup \{+\infty\}$ for each node (these represent local costs for each agent's own state; they can enforce constraints by taking value $+\infty$ on disallowed states).
    \item Convex \emph{potential functions} $U_e: \mathcal{F}(e) \to \mathbb{R} \cup \{+\infty\}$ for each edge (these represent the cost or constraint associated with the relation between the two endpoints' data when projected to the edge).
\end{itemize}

We allow extended real-valued objective and potential functions to handle constraints as well as costs (a common trick in convex optimization: a constraint $h(x)=0$ can be enforced by adding an indicator function $I_{\{h(x)=0\}}(x)$ which is 0 if $h(x)=0$ and $+\infty$ otherwise). In many cases, $U_e$ will be something like a strongly convex function that has a unique minimizer at the "allowed" relative configuration of $i$ and $j$'s data, and $f_i$ might be something like a regular quadratic penalty or other cost for agent $i$'s state deviating from a preferred value.

Given this data, we can define the optimization problem:
\begin{equation}\label{eq:homological-program}
\begin{aligned}
\min_{\substack{x \in C^0(G;\mathcal{F}) \\ y \in C^1(G;\mathcal{F})}} \quad & \sum_{i\in V} f_i(x_i) + \sum_{e \in E} U_e(y_e) \\
\text{s.t.}\quad & \mathcal{F}_{i\to e}(x_i) = y_e, \quad \forall e=\{i,j\}\in E, \; \forall i \text{ endpoint of } e.
\end{aligned}
\end{equation}
Here the decision variables are $x = \{x_i\}_{i\in V}$ (node assignments) and $y=\{y_e\}_{e\in E}$ (edge assignments), and the constraints enforce that $(x,y)$ forms a global section of the sheaf (i.e. that $y_e$ equals the restriction of $x$ on both endpoints of $e$). In other words, $y = \delta_{\mathcal{F}} x$ (with appropriate sign conventions, but effectively the constraint means $\delta_{\mathcal{F}} x = 0$ as in the linear case, except here we allow an $y_e$ which is then also optimized). If we eliminated $y$, we could equivalently write the problem as:
\[
\min_{x \in C^0(G;\mathcal{F})} \sum_{i} f_i(x_i) + \sum_{e} U_e\!\big(\mathcal{F}_{i\to e}(x_i)\ -\ \mathcal{F}_{j\to e}(x_j)\big),
\] 
with the understanding that $e=\{i,j\}$. This form shows it is an optimization over node variables, where each edge contributes a coupling cost $U_e$ of the difference (or discrepancy) between $x_i$ and $x_j$ as measured in $\mathcal{F}(e)$.

A more compact way to express \eqref{eq:homological-program} is using the cochain notation:
\begin{equation}\label{eq:homological-program-compact}
\begin{aligned}
\min_{x \in C^0(G;\mathcal{F})} \quad & \sum_{i\in V} f_i(x_i) + \sum_{e\in E} U_e\big((\delta_{\mathcal{F}}x)_e\big) \\
= \min_{x \in C^0(G;\mathcal{F})} \quad & \sum_{i} f_i(x_i) + U\big(\delta_{\mathcal{F}} x\big),
\end{aligned}
\end{equation}
where $U: C^1(G;\mathcal{F}) \to \mathbb{R}\cup\{\infty\}$ is the function that on a 1-cochain $y=\{y_e\}$ evaluates to $\sum_e U_e(y_e)$. The feasible set where $\delta_{\mathcal{F}} x = 0$ (global sections) can be enforced by making $U_e(0)$ finite (e.g. 0) and $U_e(y_e) = +\infty$ for $y_e \neq 0$ if we wanted a hard constraint of consistency. But more generally, allowing $U_e$ to be smooth and convex (say quadratic) penalizes inconsistency softly rather than hard-constraining it.

\begin{definition}[Nonlinear Homological Program]\label{def:nhp}
The optimization problem \eqref{eq:homological-program-compact} defined by graph $G$, sheaf $\mathcal{F}$, node objectives $\{f_i\}$, and edge potentials $\{U_e\}$ is called a \textbf{nonlinear homological program} (NHP), denoted 
\[
P = (V, E, \mathcal{F}, \{f_i\}, \{U_e\}).
\]
We say $P$ is \emph{convex} if each $f_i$ and $U_e$ are convex functions. We say $P$ is \emph{feasible} if there exists at least one global section (i.e. $\exists x$ with $\delta_{\mathcal{F}} x = 0$) in the domain of the functions such that the objective is not $+\infty$.
\end{definition}

When $P$ is convex, it is a convex optimization problem that can in principle be solved by centralized solvers. But our interest is in solving it \emph{in a distributed fashion}, leveraging the graph structure. The typical difficulty is the coupling introduced by the $\delta_{\mathcal{F}} x$ term: $\delta_{\mathcal{F}}x=0$ couples variables of neighboring nodes. However, as we will see, this coupling is sparse (each edge only couples two nodes), and we can exploit that using methods like ADMM.

\paragraph{Examples of Homological Programs.} We give a couple of examples to illustrate how classical multi-agent problems fit into this framework.

\begin{example}[Consensus as a Homological Program]
Let $G$ be a connected graph of $N$ agents. Each agent $i$ has a variable $x_i \in \mathbb{R}$ (scalar consensus problem). We can use the constant sheaf $\underline{\mathbb{R}}$ on $G$ (so $\mathcal{F}(i)=\mathbb{R}$, $\mathcal{F}(e)=\mathbb{R}$, and $\mathcal{F}_{i\to e}$ identity). For node objectives, take $f_i(x_i) = 0$ for all $i$ (the agents themselves don't have any private cost; we just want consensus). For edge potentials, take $U_e(y_e) = \frac{\rho}{2}y_e^2$ (a quadratic function penalizing differences), where $\rho>0$ is some weight. Then the program \eqref{eq:homological-program-compact} becomes 
\[
\min_{x \in \mathbb{R}^N} \sum_{e=\{i,j\}} \frac{\rho}{2} (\mathcal{F}_{i\to e}(x_i) - \mathcal{F}_{j\to e}(x_j))^2 = \min_{x} \frac{\rho}{2}\sum_{e=\{i,j\}} (x_i - x_j)^2.
\]
This objective expands to $\frac{\rho}{2}x^T L_G x$, where $L_G$ is the graph Laplacian. Because $L_G$ is positive semidefinite and has $\mathbf{1}$ (all-ones vector) in its nullspace, this objective is minimized (to 0) by any $x$ that is constant on each connected component of $G$. Thus the minimizers are exactly the consensus states $\{x_i = c, \, \forall i\}$ for some $c$. So this homological program encodes the consensus problem. 

If we wanted to force consensus to a particular value, we could add a node objective, e.g. $f_1(x_1) = I_{\{x_1 = c^*\}}(x_1)$ (an indicator forcing $x_1=c^*$) to pin one node to some $c^*$. Then the unique minimizer would be $x_i=c^*$ for all $i$. In distributed averaging or agreement problems, often each agent starts with some initial value and they converge to the average; that scenario can be formulated as a consensus optimization with additional structure (like preserving sum, which is linear and might be seen via Lagrange multipliers).
\end{example}

\begin{example}[Formation Control as a Homological Program]
Consider 3 robots that need to form a triangle of specified pairwise distances. Label them 1,2,3. The communication graph is complete $K_3$ (each pair can measure relative distance). For simplicity, assume in 2D plane. Let the state of robot $i$ be $x_i \in \mathbb{R}^2$ (its position). Use the constant sheaf $\underline{\mathbb{R}^2}$ (so that each edge also has $\mathbb{R}^2$ as the data space, and restriction maps identity, meaning we are expecting positions to match on edges for consistency—here we will handle desired offsets via potentials, not via the linear map). Now for each edge $e=\{i,j\}$, let $d_{ij}^*$ be the desired vector difference from $i$ to $j$. For instance, we want $x_j - x_i = d_{ij}^*$. If we set a potential 
\[
U_{e}(y_e) = \frac{k}{2}\|\,y_e - d_{ij}^*\,\|^2,
\] 
which is minimized when $y_e = d_{ij}^*$, then our program becomes:
\[
\min_{x_1,x_2,x_3} \frac{k}{2}\Big(\| (x_2 - x1) - d_{12}^*\|^2 + \| (x_3 - x1) - d_{13}^*\|^2 + \| (x_3 - x2) - d_{23}^*\|^2\Big).
\]
This sums squared errors for each pair relative to the desired difference. The minimizers (if $d^*_*$ are compatible) correspond to formations congruent (via translation) to the desired triangle. In fact, there will be a 2-dimensional family of minimizers corresponding to all global translations of the formation (and possibly rotations or reflections if the $d^*$ are only distances not oriented differences). Those correspond to non-uniqueness due to symmetry (global sections can differ by an element of $H^0$, which here includes a translation freedom in the plane).
\end{example}

Many other tasks (flocking, where edges enforce velocity alignment and distance constraints; distributed sensor localization; etc.) can be encoded similarly. The pattern is: each constraint or coupling between agents is encoded as an edge potential that penalizes deviation from the desired relation.

A key theoretical result is that if all $f_i$ are convex and all $U_e$ are convex, then $P$ is a convex optimization problem. If further each $U_e$ is differentiable and has some curvature (e.g., strictly or strongly convex), then standard results ensure nice properties like uniqueness of the minimizer. In particular:

\begin{theorem}[Convexity of Homological Program] 
If each $f_i: \mathcal{F}(i)\to \mathbb{R}\cup\{\infty\}$ is convex and each $U_e: \mathcal{F}(e)\to \mathbb{R}\cup\{\infty\}$ is convex (and lower bounded), then $P$ is a convex optimization problem. Moreover, if each $U_e$ is differentiable and strongly convex (i.e., has a unique minimizer $b_e$ for each $e$) and each $f_i$ is convex, then $P$ has a unique minimizer (assuming feasibility). 
\end{theorem}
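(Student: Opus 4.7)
The plan is to split the argument by the two assertions of the statement. For the convexity claim, my first step would be to observe that the constraints $\mathcal{F}_{i\to e}(x_i) = y_e$ are linear in the joint variable $(x,y) \in C^0(G;\mathcal{F}) \oplus C^1(G;\mathcal{F})$, so they carve out an affine subspace and in particular a convex feasible set. The objective is separable in the disjoint variable groups $\{x_i\}$ and $\{y_e\}$, hence is a sum of convex functions and therefore convex. A convex objective over a convex feasible set is, by definition, a convex program, so this portion requires no real work beyond bookkeeping.

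The uniqueness claim is where the content lies, and I would argue by the standard strict-convexity contradiction. Suppose $(x^{(1)}, y^{(1)})$ and $(x^{(2)}, y^{(2)})$ are both minimizers with common optimal value $v^\star$, and form the midpoint $(\bar x, \bar y) = \tfrac{1}{2}(x^{(1)} + x^{(2)}, y^{(1)} + y^{(2)})$; it is feasible by linearity of the constraints, and convexity of the objective gives $F(\bar x, \bar y) \le v^\star$. The key step is that if $y^{(1)}_e \neq y^{(2)}_e$ for some edge $e$, strong convexity of $U_e$ forces the strict inequality $U_e(\bar y_e) < \tfrac{1}{2}(U_e(y^{(1)}_e) + U_e(y^{(2)}_e))$, and combined with plain convexity of the remaining terms this yields $F(\bar x, \bar y) < v^\star$, contradicting optimality. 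Hence $y^{(1)} = y^{(2)}$.

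The last step --- deducing $x^{(1)} = x^{(2)}$ from $y^{(1)} = y^{(2)}$ --- is where I anticipate the real obstacle. From the constraint $y_e = \mathcal{F}_{i\to e}(x_i)$ one obtains $\mathcal{F}_{i\to e}(x^{(1)}_i - x^{(2)}_i) = 0$ for every incident pair, so the difference $z = x^{(1)} - x^{(2)}$ lies componentwise in $\bigcap_{e \ni i}\ker \mathcal{F}_{i\to e}$. Strong convexity of the $U_e$ alone controls only $\delta_{\mathcal{F}} x$ and is blind to this joint kernel, so closing the gap requires either an injectivity-type hypothesis on the restriction maps at each vertex (so the joint kernel is trivial), or genuine strict convexity of some $f_i$ in the offending directions. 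My plan is to state this supplementary assumption explicitly and --- echoing the translational symmetry of the formation-control example earlier --- to note that without it one can at best expect uniqueness of $y$ together with uniqueness of $x$ modulo the subspace $\{z : z_i \in \bigcap_{e\ni i}\ker \mathcal{F}_{i\to e} \text{ for all } i\}$. Thus the principal obstacle is not any computation but identifying the minimal extra hypothesis that makes the clean uniqueness statement correct.
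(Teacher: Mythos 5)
Your argument is essentially the paper's own: the paper's sketch likewise gets convexity by observing that the objective is a sum of convex functions precomposed with the linear map $\delta_{\mathcal{F}}$, and for uniqueness it likewise concedes that strong convexity of the $U_e$ only gives strict convexity ``in directions that change any inconsistent component,'' leaving a nullspace of symmetry directions along which only the merely-convex $f_i$ act, so that uniqueness holds only ``modulo those symmetries'' or after ``fixing a reference point.'' Your midpoint/strict-convexity contradiction is a cleaner and more explicit version of the same argument, and your diagnosis that the theorem as literally stated needs a supplementary hypothesis (trivial residual kernel, or strict convexity of some $f_i$ along it) is exactly the hedge the paper buries in the phrase ``under technical conditions.'' So on the main point you and the paper agree, and you are more honest about it.

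One technical caution: you carry out the analysis in the two-block formulation \eqref{eq:homological-program}, where the constraint $\mathcal{F}_{i\to e}(x_i)=y_e$ holds for \emph{each} endpoint, and you therefore identify the residual freedom as $\{z : z_i \in \bigcap_{e\ni i}\ker\mathcal{F}_{i\to e}\}$. But Definition \ref{def:nhp} defines $P$ by the compact form \eqref{eq:homological-program-compact}, in which $U_e$ is evaluated at the \emph{discrepancy} $(\delta_{\mathcal{F}}x)_e = \mathcal{F}_{i\to e}(x_i)-\mathcal{F}_{j\to e}(x_j)$; for that problem the residual kernel is $\ker\delta_{\mathcal{F}} = H^0(G;\mathcal{F})$, the space of global sections, which is in general strictly larger. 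The translational symmetry of the formation-control example that you invoke is the telltale case: with identity restriction maps a global translation lies in $\ker\delta_{\mathcal{F}}$ but \emph{not} in $\bigcap_{e\ni i}\ker\mathcal{F}_{i\to e}$ (which is trivial there), so your own example is an obstruction for the compact form but would not be detected by the subspace you wrote down. (The paper itself invites this confusion by asserting, incorrectly, that eliminating $y$ from \eqref{eq:homological-program} yields \eqref{eq:homological-program-compact}.) The fix is simply to state the supplementary hypothesis as: the objective $\sum_i f_i$ restricted to $\ker\delta_{\mathcal{F}}$ is strictly convex, or $\ker\delta_{\mathcal{F}}$ is trivial; with that, your argument closes and is, if anything, a more rigorous rendering of what the paper sketches.
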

\begin{proof}[Sketch of Proof]
The objective in \eqref{eq:homological-program-compact} is a sum of convex functions in $(x_i)$ and $(\delta_{\mathcal{F}} x)$ respectively, composed with a linear map $\delta_{\mathcal{F}}$. The constraints can be incorporated into the objective via indicator functions if needed. The sum of convex functions remains convex. Strong convexity of $U_e$ ensures the overall objective is strictly convex in directions that change any inconsistent component, so combined with convex $f_i$ (which could be just 0, not necessarily strictly convex), the whole objective becomes strictly convex in $x$ if the only directions in nullspace of second derivative correspond to $\delta_{\mathcal{F}} x = 0$ directions (which typically are finite-dimensional global symmetries). Under technical conditions, a unique minimizer modulo those symmetries exists (one may need to fix one reference point to break symmetry). Detailed proofs appear in Hanks et al.~\cite{HanksEtAl}.
\end{proof}

Given that $P$ is convex (in nice cases), the next question is: how do we solve it in a \emph{distributed} manner? In a network of agents, we want an algorithm where each agent only uses information from its neighbors to update its state, and yet the group converges to the global minimizer of $P$. This is where the \textbf{ADMM} (Alternating Direction Method of Multipliers) comes into play, which we cover in detail in Section \ref{sec:admm}.

Before diving into ADMM, we reflect on how the above formalism connects back to our original aims:
- The sheaf $\mathcal{F}$ provides the abstraction to encode various constraints and different data types on edges.

- The $f_i$ can encode individual objectives or constraints for each agent (for example, each robot has a cost for deviance from a nominal path, or a sensor has a cost for energy usage).

- The $U_e$ encode the coordination objectives (like consensus, formation shape maintenance, collision avoidance if desired via inequality constraints, etc.) between agents.

- The condition $\delta x = 0$ at optimum corresponds to all coordination constraints being satisfied (the network reaches a consistent configuration).

- If we set $f_i = 0$ and each $U_e$ as an indicator of the constraint, then solving $P$ is basically finding a global section of the sheaf that satisfies all constraints (if one exists). If $U_e$ are penalties rather than hard constraints, solving $P$ yields the "closest" approximate global section balancing the costs.

This optimization view is powerful, but solving it requires some method. We now have the stage set to apply a distributed optimization algorithm.

\subsection{ADMM for Distributed Optimization}\label{sec:admm}
The Alternating Direction Method of Multipliers (ADMM) is a well-known algorithm in convex optimization that is particularly useful for problems with separable objectives and linear constraints \cite{BoydADMM2011}. It decomposes a problem into subproblems that are then coordinated to find a global solution.

Before applying ADMM to our specific problem $P$, let's recall the standard form of ADMM. Consider a generic problem:
\[
\min_{u, v} \; F(u) + G(v) \quad \text{s.t.}\quad A u + B v = c,
\]
where $F$ and $G$ are convex functions in $u$ and $v$ respectively, and $A, B$ are matrices (or linear operators) and $c$ is some constant vector. The augmented Lagrangian for this problem is 
\[
L_\rho(u,v,\lambda) = F(u) + G(v) + \lambda^T (Au + Bv - c) + \frac{\rho}{2}\|Au + Bv - c\|^2,
\]
where $\lambda$ is the dual variable (Lagrange multiplier) and $\rho>0$ is a penalty parameter. ADMM proceeds by iteratively performing:
\begin{align*}
u^{k+1} &= \arg\min_u L_\rho(u, v^k, \lambda^k), \\
v^{k+1} &= \arg\min_v L_\rho(u^{k+1}, v, \lambda^k), \\
\lambda^{k+1} &= \lambda^k + \rho (A u^{k+1} + B v^{k+1} - c).
\end{align*}

These steps can be interpreted as follows: first, minimize the augmented Lagrangian with respect to $u$ while keeping $v$ fixed; then minimize with respect to $v$ while keeping the updated $u$ fixed; finally, update the multiplier $\lambda$ using a gradient step on the dual function.

ADMM is particularly effective when $F$ and $G$ lead to easier subproblems in $u$ and $v$ separately than the combined problem in $(u,v)$. Additionally, in multi-agent settings, the updates can often be computed in parallel when $u$ or $v$ comprise independent components corresponding to different agents.

In a multi-agent context, a common approach is to introduce local copies of shared variables and then enforce consistency via constraints, which ADMM handles. We will do exactly that: each agent will maintain a local version of its state (which it optimizes in its own $f_i$) and there will be "edge variables" that enforce the coupling, similar to $y_e$ we had. In fact, we already wrote $P$ with both $x$ and $y$, which is convenient for ADMM because it separates nicely: the sum of $f_i(x_i)$ and $U_e(y_e)$ is separable over $i$ and $e$ if $x$ and $y$ are treated as separate blocks of variables.

Let us set up $P$ in the consensus form for ADMM:
\[
\min_{x,z} \; \sum_{i\in V} f_i(x_i) + \sum_{e\in E} U_e(z_e) \quad \text{s.t.}\; \delta_{\mathcal{F}} x + (-I) z = 0.
\]
Here we consider $z = y$ as the variable for edges, and $-I z$ just means $z_e$ enters with a minus sign (so the constraint is $\delta_{\mathcal{F}} x - z = 0$ which is equivalent to $\delta_{\mathcal{F}} x = z$, matching $y$ we had). We identify $u := x$ and $v := z$ in the ADMM template, with $F(u) = \sum_i f_i(x_i)$ and $G(v) = \sum_e U_e(z_e)$. The constraint matrix $A$ corresponds to $\delta_{\mathcal{F}}$ acting on $x$, and $B$ corresponds to $-I$ acting on $z$, and $c$ is zero. Each of these pieces has structure:

- $\sum_i f_i(x_i)$ is separable by agent.

- $\sum_e U_e(z_e)$ is separable by edge.

- The constraint $Ax + Bz = 0$ couples $x_i$ and $z_e$ for each incidence.

Applying ADMM to this formulation, we introduce a dual variable $\lambda$ associated with the constraint $\delta_{\mathcal{F}} x - z = 0$. The dual variable lives in the space of 1-cochains (since the constraint is a 1-cochain equation). It can be interpreted as a vector $\lambda = \{ \lambda_e\}_{e\in E}$ where $\lambda_e \in \mathcal{F}(e)^*$ (same space as $z_e$ effectively) is a Lagrange multiplier enforcing consistency on edge $e$. Economically, $\lambda_e$ can be seen as the "price" of disagreement on edge $e$; physically in a multi-agent system, it might correspond to a force or tension associated with edge $e$ trying to enforce the constraint.

The ADMM update steps become:
\begin{align}
x^{k+1} &= \arg\min_{x \in C^0(G;\mathcal{F})} \Big( \sum_i f_i(x_i) + \frac{\rho}{2}\sum_{e} \|\mathcal{F}_{i\to e}(x_i) - \mathcal{F}_{j\to e}(x_j) - z^k_e + \frac{1}{\rho}\lambda^k_e \|^2 \Big). \label{eq:admm-x-update}\\
z^{k+1} &= \arg\min_{z \in C^1(G;\mathcal{F})} \Big( \sum_e U_e(z_e) + \frac{\rho}{2}\sum_e \| \mathcal{F}_{i\to e}(x^{k+1}_i) - \mathcal{F}_{j\to e}(x^{k+1}_j) - z_e + \frac{1}{\rho}\lambda^k_e \|^2 \Big). \label{eq:admm-z-update}\\
\lambda^{k+1}_e &= \lambda^k_e + \rho\big(\mathcal{F}_{i\to e}(x^{k+1}_i) - \mathcal{F}_{j\to e}(x^{k+1}_j) - z^{k+1}_e\big), \quad \forall e\in E.\label{eq:admm-dual-update}
\end{align}

We have written $\mathcal{F}_{i\to e}(x^{k+1}_i) - \mathcal{F}_{j\to e}(x^{k+1}_j)$ inside since that's $(\delta_{\mathcal{F}} x^{k+1})_e$. In practice, it's easier to combine terms: note that in \eqref{eq:admm-x-update}, the quadratic term couples $x_i$ and $x_j$. But we can observe that \eqref{eq:admm-x-update} actually breaks into separate minimizations for each $i$ due to separability: each $f_i(x_i)$ plus terms for each edge incident to $i$. Specifically, agent $i$ sees terms involving $x_i$ from each edge $e$ incident on $i$. If we gather those, the $x_i$-update is:
\[
x^{k+1}_i = \arg\min_{u \in \mathcal{F}(i)} f_i(u) + \frac{\rho}{2}\sum_{e: e=\{i,j\}} \| \mathcal{F}_{i\to e}(u) - (\mathcal{F}_{j\to e}(x^k_j) - z^k_e + \frac{1}{\rho}\lambda^k_e) \|^2.
\]
This is effectively a proximal update: it says agent $i$ should choose $x_i$ as a trade-off between minimizing its own cost $f_i(x_i)$ and being close to a value that satisfies the constraints for each neighboring edge (where neighbor $j$'s last state plus the dual adjustments set a target). If $f_i$ is simple (e.g. quadratic), this is often an easily computable step (like averaging or thresholding).

\begin{definition}[Proximal Operator]
Given a proper, closed, and convex function $f: \mathcal{X} \to \mathbb{R} \cup \{+\infty\}$ and a positive parameter $\rho > 0$, the \emph{proximal operator} of $f$ with parameter $\rho$ is defined as:
\[
\mathrm{prox}_{\rho f}(v) := \arg\min_{x \in \mathcal{X}} \left\{ f(x) + \frac{1}{2\rho}\|x - v\|^2 \right\}
\]
The proximal operator can be interpreted as finding a point that balances minimizing $f$ while staying close to $v$. It generalizes the notion of projection onto a set (when $f$ is an indicator function of that set) and includes gradient steps as a special case (when $f$ is differentiable).
\end{definition}

The $z$-update in \eqref{eq:admm-z-update} is separable by edge $e$:
\[
z^{k+1}_e = \arg\min_{w \in \mathcal{F}(e)} U_e(w) + \frac{\rho}{2}\| w - (\mathcal{F}_{i\to e}(x^{k+1}_i) - \mathcal{F}_{j\to e}(x^{k+1}_j) + \frac{1}{\rho}\lambda^k_e) \|^2,
\]
which is the proximal operator of $U_e$ at the "point" $\mathcal{F}_{i\to e}(x^{k+1}_i) - \mathcal{F}_{j\to e}(x^{k+1}_j) + \frac{1}{\rho}\lambda^k_e$. If $U_e$ is an indicator of some feasible set or has a closed form proximal, this can often be solved locally by the two agents $i,j$ sharing the value.

Finally, the $\lambda$ update is just accumulation of residual $\delta_{\mathcal{F}} x - z$. Each $\lambda_e$ is updated by the two agents on edge $e$ exchanging their $x$ values and the current $z$ and updating $\lambda_e$.

The ADMM algorithm allows each agent and each edge to update based only on local information:

- Agent $i$ needs $x_j^k$ and $\lambda_e^k$, $z_e^k$ from edges to neighbors $j$.

- Edge $e=\{i,j\}$ needs $x_i^{k+1}$, $x_j^{k+1}$, and $\lambda_e^k$ to update $z_e$.

- Then edge $e$ also updates $\lambda_e$ with new $x$ and $z$.

Thus communication occurs along edges only, making this distributed.

However, a challenge arises: the $z$-update requires solving $\arg\min_w U_e(w) + \frac{\rho}{2}\|w - \xi\|^2$ where $\xi = (\mathcal{F}_{i\to e}(x_i^{k+1}) - \mathcal{F}_{j\to e}(x_j^{k+1}) + \frac{1}{\rho}\lambda_e^k)$. If $U_e$ is strongly convex and smooth, one could take a gradient step or closed form. But to keep it distributed, note that solving this may not be trivial in closed form unless $U_e$ is simple (like quadratic).

\subsection{Background on Nonlinear Operators and Fixed-Point Methods}
Before proceeding with the nonlinear sheaf Laplacian, we briefly review some mathematical concepts that will be essential for understanding the following derivations.

\paragraph{Subdifferentials and Non-smooth Optimization.} When dealing with non-differentiable convex functions, the standard notion of gradient is replaced by the \emph{subdifferential}. For a convex function $f: \mathcal{X} \to \mathbb{R} \cup \{+\infty\}$, the subdifferential at a point $x$, denoted $\partial f(x)$, is the set of all subgradients:
\[
\partial f(x) = \{g \in \mathcal{X}^* \mid f(y) \geq f(x) + \langle g, y-x \rangle \text{ for all } y \in \mathcal{X}\}
\]
where $\mathcal{X}^*$ is the dual space of $\mathcal{X}$. When $f$ is differentiable at $x$, then $\partial f(x) = \{\nabla f(x)\}$, a singleton set containing just the gradient. The notation $0 \in \partial f(x)$ indicates that $x$ is a minimizer of $f$, which is the first-order optimality condition for convex functions.

\paragraph{Gradient Flows and Dynamical Systems.} A gradient flow for minimizing a differentiable function $f$ can be expressed as the ODE:
\[
\dot{x} = -\nabla f(x)
\]
This system evolves in the direction of steepest descent of $f$. For a strongly convex function, this flow converges to the unique minimizer of $f$. When $f$ is not differentiable, we can use the generalized gradient flow:
\[
\dot{x} \in -\partial f(x)
\]
which is a differential inclusion rather than a differential equation.

\paragraph{Fixed-Point Iterations.} For finding a point $x^*$ such that $x^* = T(x^*)$ for some mapping $T$, fixed-point iteration methods use the update rule:
\[
x^{k+1} = T(x^k)
\]
Under certain conditions (like $T$ being a contraction), this iteration converges to a fixed point. Many optimization algorithms, including proximal methods, can be formulated as fixed-point iterations. The equation $x = T(x)$ is called a fixed-point equation.

\paragraph{Implicit and Explicit Methods.} When discretizing differential equations, explicit methods (like forward Euler) compute the next state directly from the current state, while implicit methods (like backward Euler) require solving an equation involving the next state. For gradient flows, the implicit Euler discretization leads to the proximal point algorithm, which has superior stability properties compared to explicit methods.

With these concepts in mind, we now return to our ADMM algorithm for distributed coordination and examine how the nonlinear sheaf Laplacian arises naturally from the $z$-update step.

\subsection{Nonlinear Sheaf Diffusion for the Edge Update}
Hanks \textit{et al.} provide a key insight: instead of solving the $z$-update in one shot, one can run an iterative \emph{diffusion process} (like our earlier $\dot{x} = -L_{\mathcal{F}} x$ but now nonlinear) to converge to the minimizer. This is where the \textbf{nonlinear sheaf Laplacian} concept enters. The $z$-update condition (optimality for each edge) set to zero is:
\[
0 \in \nabla U_e(z^{k+1}_e) + \rho\big( z^{k+1}_e - (\mathcal{F}_{i\to e}(x^{k+1}_i) - \mathcal{F}_{j\to e}(x^{k+1}_j) + \tfrac{1}{\rho}\lambda^k_e )\big).
\]
Rearranged:
\[
z^{k+1}_e = \mathcal{F}_{i\to e}(x^{k+1}_i) - \mathcal{F}_{j\to e}(x^{k+1}_j) + \tfrac{1}{\rho}\lambda^k_e - \tfrac{1}{\rho}\nabla U_e(z^{k+1}_e).
\]
This looks like a fixed-point equation for $z_e$. If we define $b_e$ to be the unique minimizer of $U_e$ (since $U_e$ strongly convex, assume $b_e = \arg\min U_e$ exists and $U_e(b_e)=0$ by shifting), then one expects $z^{k+1}_e$ to be somewhat in between the current discrepancy and $b_e$. 

Rather than directly solve it, they propose to treat $z$ not as an explicit variable to update but to integrate a dynamics that reaches this solution. Concretely, consider the continuous time dynamics:
\[
\dot{z}_e = -\nabla U_e(z_e) - \rho( z_e - (\mathcal{F}_{i\to e}(x_i) - \mathcal{F}_{j\to e}(x_j) + \tfrac{1}{\rho}\lambda_e) ).
\]
At equilibrium ($\dot{z}_e=0$), this yields exactly the above optimality condition for $z_e$. And crucially, this dynamic is local (involves agent $i$ and $j$ and the edge $e$ state). Moreover, one can show it converges to the optimum $z_e$ (since it's like a gradient descent in $w$ with step $\rho$ on a strongly convex function). In fact, if we embed this into the updates more systematically, it becomes a two-timescale algorithm: agents update $x$ and $\lambda$ at discrete steps, and edge states $z$ are driven by a faster dynamics to near the optimum.

Hanks \textit{et al.} go further to eliminate the need to explicitly represent $z_e$ at all: they incorporate this dynamics directly into $x$ update by noticing that eliminating $z$ and $\lambda$ yields a second-order update purely in $x$ which is equivalent to a \emph{nonlinear diffusion equation on the sheaf}. They call this the \textbf{nonlinear Laplacian dynamics}:
\begin{equation}\label{eq:nonlinear-laplacian-dyn}
\dot{x}(t) = -L_{\nabla U}^{\mathcal{F}} x(t),
\end{equation}
where $L_{\nabla U}^{\mathcal{F}}$ is the \emph{nonlinear sheaf Laplacian} defined via the relation 
\[
(L_{\nabla U}^{\mathcal{F}} x)_i = \sum_{e=\{i,j\}} \mathcal{F}_{i\to e}^T \nabla U_e\big(\mathcal{F}_{i\to e}(x_i) - \mathcal{F}_{j\to e}(x_j)\big).
\]
Compare this with the linear case earlier: previously, $(L_{\mathcal{F}}x)_i = \sum_e \mathcal{F}_{i\to e}^T(\mathcal{F}_{i\to e}(x_i) - \mathcal{F}_{j\to e}(x_j))$. Now we simply replace the linear difference by $\nabla U_e$ of that difference. If $U_e(y) = \frac{1}{2}\|y\|^2$, then $\nabla U_e(y)=y$ and we recover linear $L_{\mathcal{F}}$. If $U_e$ is, say, quadratic with minimizer $b_e$, $\nabla U_e(y) = K (y - b_e)$ for some $K$, and the dynamics tries to push $\mathcal{F}_{i\to e}(x_i) - \mathcal{F}_{j\to e}(x_j)$ towards $b_e$ on each edge.

Hanks \textit{et al.} proved that under the assumption that each $U_e$ is strongly convex with minimizer $b_e \in \operatorname{im}\delta_{\mathcal{F}}$ (meaning the desired edge outcomes are consistent with some global assignment), the nonlinear Laplacian dynamics \eqref{eq:nonlinear-laplacian-dyn} converges to a global section $x^*$ that achieves $\mathcal{F}_{i\to e}(x^*_i) - \mathcal{F}_{j\to e}(x^*_j) = b_e$ for each edge (essentially projecting the initial state onto the feasible set of constraints) \footnote{Hanks et al., Theorem 2, 2025. In our notation, it states that if $b = \{b_e\}_{e\in E}$ is an edge assignment in the image of $\delta_{\mathcal{F}}$ (so constraints are consistent), then $\dot{x}=-L_{\nabla U}^{\mathcal{F}} x$ converges to a point $x^\infty$ with $\delta_{\mathcal{F}} x^\infty = b$. Equivalently, $x^\infty$ is the global section achieving the desired $b_e$ on each edge. If $b$ is not exact (in image of $\delta_{\mathcal{F}}$), then no perfect global section exists; one can show the dynamics converges to a least-squares approximate solution in that case (projection onto $\operatorname{im}\delta_{\mathcal{F}}$).}.

Algorithmically, we can integrate a discrete approximation of \eqref{eq:nonlinear-laplacian-dyn} as the method for computing the projection in the $z$-update. This yields a fully distributed algorithm where agents continuously adjust their states by exchanging information with neighbors and applying the "forces" $\nabla U_e$.

\subsection{Summary of the Distributed Algorithm}
The ADMM-based distributed solution for the homological program can be described at a high level:
\begin{enumerate}
    \item Each agent $i$ initializes its state $x_i(0)$ (perhaps to some initial guess or measured value).
    \item \textbf{Repeat until convergence:}
    \begin{enumerate}
        \item Each agent performs a local state update (this corresponds to the $x$-update of ADMM, often just computing a gradient step of $f_i$ plus neighbor terms).
        \item The network performs a \emph{sheaf diffusion step}: neighbors $i,j$ communicate and adjust their states to reduce inconsistency via something like $x_i \leftarrow x_i - \alpha \mathcal{F}_{i\to e}^T(\mathcal{F}_{i\to e}(x_i) - \mathcal{F}_{j\to e}(x_j))$ (this is a discrete Euler step of the nonlinear Laplacian flow, repeated a few times).
        \item Agents update dual variables (or equivalently accumulate the inconsistency error).
    \end{enumerate}
\end{enumerate}
Under convexity and proper parameter tuning, this process converges to the optimal solution $x^*$ of the original problem $P$. The resulting $x^*$ is then a set of states each agent can take that jointly minimize the global objective while satisfying the constraints approximately (the error goes to zero if $b$ is reachable, or is minimized in least squares otherwise).

To ensure convergence, one often requires strong convexity or a sufficiently small step size in the diffusion. Hanks \textit{et al.} also discuss a relaxation where if strong convexity is absent, one can still run essentially the same algorithm but may need to modify the projection step (for instance, if $U_e$ has multiple minimizers, any point in the minimizer set could work, or add a tiny quadratic regularization to $U_e$ to select one).

\section{The Classical Graph Laplacian and Consensus Review}
Before further analyzing the sheaf-based method and exploring its connection to neural networks, we briefly review the classical graph Laplacian and consensus problem to anchor our intuition.

\subsection{Definition and Basic Properties of Graph Laplacian}
Let $G=(V,E)$ be a graph with $|V|=n$. The (combinatorial) \textbf{graph Laplacian} $L_G$ is the $n\times n$ matrix defined by
\[
(L_G)_{ij} = \begin{cases}
\deg(i) & \text{if } i=j,\\
-1 & \text{if } i\sim j \text{ (edge between $i,j$)},\\
0 & \text{otherwise}.
\end{cases}
\]
In other words, $L_G = D - A$ where $D$ is the diagonal matrix of degrees and $A$ is the adjacency matrix. Equivalently, for a vector $x \in \mathbb{R}^n$, the $i$-th component $(L_G x)_i = \sum_{j: j\sim i} (x_i - x_j)$. This operator is symmetric and positive semidefinite. Its nullspace is the span of the all-ones vector $\mathbf{1}$ (assuming $G$ is connected), meaning $L_G x = 0$ if and only if $x_1=x_2=\cdots=x_n$ (consensus).

Key properties include:
\begin{itemize}
    \item The eigenvalues $0=\lambda_1 \le \lambda_2 \le \cdots \le \lambda_n$ of $L_G$ relate to connectivity (Fiedler value $\lambda_2 >0$ if and only if $G$ is connected).
    \item The Laplacian can be seen as the matrix of quadratic form $\frac{1}{2}\sum_{i\sim j}(x_i - x_j)^2$ in the sense that $x^T L_G x = \sum_{\{i,j\}\in E} (x_i - x_j)^2$.
    \item The pseudoinverse $L_G^+$ exists for connected graphs and projects onto the subspace orthogonal to $\mathbf{1}$.
\end{itemize}

\subsection{Consensus Dynamics}
The simplest distributed coordination algorithm on a graph is the consensus protocol: each agent $i$ has a scalar $x_i(t)$, and they update according to 
\[
\dot{x}_i(t) = -\sum_{j \in N_i} (x_i(t) - x_j(t)).
\]
In vector-matrix form, $\dot{x} = -L_G x$. As mentioned, this is exactly of the form \eqref{eq:linear-sheaf-dynamics} for a trivial sheaf. It is well known that for a connected graph, $x(t) \to c\mathbf{1}$ as $t\to \infty$, where $c$ is the average of the initial values (if the system is weight-balanced or we use $\frac{1}{\deg(i)}$ weights, it converges to the average; for the simple Laplacian as given, it converges to some consensus value which, due to the design, actually equalizes but does not preserve average unless the graph is regular or one uses weights). Nonetheless, the main point is convergence to agreement.

This consensus can be seen as solving the optimization $\min \sum_{i<j,\, i\sim j}(x_i-x_j)^2$, which is minimized by $x_i = x_j$ for all edges (consensus). We solved it by a subgradient method (gradient of that quadratic cost yields $-Lx$).

\subsection{Example: Average Consensus}
Suppose 4 agents on a line graph have initial values $[3, 1, 4, 2]$. Running $\dot{x}=-Lx$, or in discrete form $x_i \leftarrow x_i + \epsilon\sum_{j \in N_i}(x_j - x_i)$, they will converge to $[2.5,2.5,2.5,2.5]$ (the average) in the limit. This illustrates Laplacian flow equalizing values.

\subsection{Graph Laplacian and Sheaf Laplacian Connection}
As we proved in Lemma \ref{lem:trivial-sheaf-laplacian}, a graph Laplacian is a special case of a sheaf Laplacian. Conversely, every sheaf Laplacian is a graph Laplacian of a certain expanded graph (each edge in original graph could be thought of as multiple edges connecting the vector components). This connection means results from algebraic graph theory, such as spectral bounds and convergence rates, have analogues in sheaf Laplacians (often depending on the norms of restriction maps).

One such analogy: the consensus algorithm's convergence speed is related to $\lambda_2(L_G)$. For sheaf Laplacians, the convergence of $\dot{x}=-L_{\mathcal{F}}x$ is related to the smallest nonzero eigenvalue of $L_{\mathcal{F}}$. Sheaf Laplacians can have multiple zero eigenvalues if the sheaf has multi-dimensional global sections, but if we mod out those, the spectral gap gives the rate at which local inconsistencies dissipate.

This ties into multi-agent coordination: a larger spectral gap means faster agreement (or constraint satisfaction) across the network. Using appropriate sheaf design can increase this gap by effectively adding "more edges" through vector dimensions coupling (like a high-dimensional edge constraints can act like multiple parallel constraints).

\section{Sheaf Laplacians and Decentralized Flows}
In Section 2, we discussed how $\dot{x}=-L_{\mathcal{F}}x$ acts as a decentralized flow driving the network toward global consistency (a sheaf global section). We now delve deeper into interpreting this in the context of multi-agent systems and mention how nonlinear extensions (using $\nabla U$) serve as decentralized controllers.

\subsection{Linear Sheaf Laplacian = Decentralized Linear Controller}
Consider a scenario where agents' goal is to satisfy linear equations among their states. For example, agent $j$ should have a state that equals a fixed linear transformation of agent $i$'s state. If that transformation is encoded in $\mathcal{F}_{i\to e}$ and $\mathcal{F}_{j\to e}$, then $L_{\mathcal{F}}x=0$ are exactly the conditions for all those equations to hold. The protocol $\dot{x}=-L_{\mathcal{F}}x$ is then a proportional control law: each agent adjusts based on weighted differences between the left and right sides of each equation for each constraint it's involved in. The fact that this is negative feedback on the constraint error ensures stability and convergence to satisfaction of the constraints (if possible).

For example, suppose two UAVs must maintain the same altitude difference as two UGVs (ground vehicles). We can set a sheaf where UAV altitude and UGV altitude on an edge have restriction map identity (so edge data is a "height difference"). The linear consensus tries to equalize those height differences, effectively making the UAVs and UGVs coordinate altitudes. This is a bit abstract, but basically any linear decentralized control can be seen as a Laplacian on an appropriate sheaf.

\subsection{Nonlinear Sheaf Laplacian = Decentralized Nonlinear Controller}
When constraints are nonlinear (like distance = 5, which is $\|x_i - x_j\|=5$), a common approach is to design a decentralized controller using gradient descent on a potential function encoding that constraint (for distance, a potential like $(\|x_i-x_j\|^2 -25)^2$). The nonlinear sheaf Laplacian does exactly this in a structured way: $\nabla U_e(\mathcal{F}_{i\to e}(x_i) - \mathcal{F}_{j\to e}(x_j))$ is the gradient of the edge's potential with respect to its arguments, pulling $x_i$ and $x_j$ in directions to satisfy the constraint. And $\mathcal{F}_{i\to e}^T$ maps that "force" back into the direction in $x_i$'s own space (like projecting a force vector from edge coordinate frame to agent's coordinate frame).

Thus $\dot{x}=-L_{\nabla U}^{\mathcal{F}} x$ is nothing but a network of agents each feeling forces from each incident constraint, where each constraint exerts equal and opposite forces on its two participants (ensuring consistency with Newton's third law, in a sense, if $\mathcal{F}_{i\to e}^T$ and $\mathcal{F}_{j\to e}^T$ produce opposite effects).

This perspective bridges to physical analogies: one can think of each edge constraint as a nonlinear spring between some linear functions of agent states. Then $L_{\nabla U}^{\mathcal{F}} x = 0$ is the static equilibrium of those springs, and the dynamics is a damped relaxation to equilibrium.

In multi-agent coordination, many problems (consensus, formation, flocking) can be interpreted as such potential-minimizing systems:

- Consensus: spring potential $\propto (x_i - x_j)^2$ along each communication link.

- Formation: spring potential $\propto (\|x_i-x_j\| - d_{ij})^2$ to maintain distance.

- Flocking (alignment): potential to align velocities (like an edge potential $\propto \|v_i - v_j\|^2$).

- And so on.

Therefore, the nonlinear sheaf Laplacian flow provides a unified model for these as gradient systems (which are easier to analyze for stability etc.).

\section{Framework for Multi-Agent Coordination}
Now we step back and outline a systematic procedure to model and solve a multi-agent coordination problem using the tools discussed.

\subsection{System Model}
We consider $N$ agents labeled $1,\ldots,N$. Each agent $i$ has:
\begin{itemize}
    \item A state vector $x_i \in \mathbb{R}^{n_i}$ (dimension $n_i$ could differ per agent).
    \item Dynamics or control authority: in this paper's scope, we assume we can directly set the state or the state is static during a coordination solving phase (i.e., we focus on the steady-state or algebraic coordination problem rather than transient dynamics). If dynamic, one might incorporate that into the sheaf as a time-extended sheaf (see Example 4 in Section 2.3 where a trajectory was a section on a path graph).
    \item Possibly an individual objective or cost function $f_i(x_i)$ capturing preferences (like how far from some nominal state).
\end{itemize}
The inter-agent interactions are given by a communication/interaction graph $G=(V,E)$ where each edge $\{i,j\}$ indicates agent $i$ and $j$ can directly exchange information or influence each other.

Crucially, to capture heterogeneous interactions, we introduce a sheaf $\mathcal{F}$ on $G$. How to design $\mathcal{F}$? Typically:
\begin{itemize}
    \item Choose $\mathcal{F}(i) = \mathbb{R}^{n_i}$, matching the agent's state space.
    \item For each edge $e=\{i,j\}$, decide what relationship it enforces. For example:
    \begin{itemize}
        \item If it's a consensus-type link for some component of state, $\mathcal{F}(e)$ could be a subspace of $\mathbb{R}^{n_i}$ and $\mathbb{R}^{n_j}$ that represents the quantity to agree on. $\mathcal{F}_{i\to e}, \mathcal{F}_{j\to e}$ would be projection maps onto that subspace. (E.g., if agents have $(position, temperature)$ as state and the edge is to agree on temperature, then $\mathcal{F}(e)=\mathbb{R}$ and $\mathcal{F}_{i\to e}$ picks the temperature coordinate from $\mathbb{R}^{n_i}$).
        \item If it's a formation link ensuring a certain relative position, $\mathcal{F}(e)$ might equal $\mathbb{R}^d$ (the physical space) and $\mathcal{F}_{i\to e}(x_i) = P_{ij} x_i$ is some projection of agent $i$'s state to a position (maybe identity if $x_i$ itself is position, or perhaps something like if $x_i$ includes orientation, we might want relative orientation, etc).
        \item If it's a constraint that agent $i$'s output should feed into agent $j$'s input (like in a supply-demand network or task scheduling), $\mathcal{F}_{i\to e}$ could be the output scalar and $\mathcal{F}_{j\to e}$ the input scalar, and $\mathcal{F}(e)=\mathbb{R}$ to equate them.
    \end{itemize}
\end{itemize}
This step is perhaps the most artful: one must decide how to represent the multi-agent coordination requirements in terms of local constraint mappings. Tools like a requirement graph or knowledge of the task are needed.

Once $\mathcal{F}$ is set, any assignment of states to agents can be checked for consistency via $\delta_{\mathcal{F}} x$. If $\delta_{\mathcal{F}} x = 0$, the agents perfectly satisfy all the relations encoded by edges.

\subsection{Coordination Objective}
Sometimes, just satisfying constraints is not enough; we might also optimize a performance criterion. For example, agents might want to minimize energy usage while maintaining formation, or minimize deviation from their preferred positions while reaching consensus.

This is where the node objective $f_i(x_i)$ come in. We gather those from the problem statement. These could be:

- Quadratic penalties $(1/2)\|x_i - \hat{x}_i\|^2$ for straying from some reference $\hat{x}_i$.

- Indicators or constraints like $x_i$ must lie in some set.

- 0 if no specific cost for agent aside from constraints.

Additionally, sometimes there are global objectives like maximize the average of something or minimize total variance. If they decompose per agent or can be assigned to edges, we incorporate accordingly (if truly global in a non-separable way, ADMM still can handle if represented as a star graph or by introducing extra variables, but we won't complicate with that here).

\subsection{General Procedure}
Given the above, the steps are:
\begin{enumerate}
    \item \textbf{Model with a sheaf:} Define $G$ and $\mathcal{F}$ to encode agent capabilities and required relationships.
    \item \textbf{Set up the homological program:} Write $P=(V,E,\mathcal{F},\{f_i\},\{U_e\})$. For each edge constraint, choose a suitable potential function $U_e$. If the task is a hard constraint (like exactly maintain distance $d$), one can use a barrier or a very stiff quadratic around $d$. If soft, then softer penalty.
    \item \textbf{Ensure convexity or handle non-convexity:} Many coordination tasks (like distance constraints) lead to non-convex $U_e$ (a quadratic $\|x_i-x_j\|^2$ is convex in $(x_i,x_j)$, but if we want $\|x_i-x_j\|=d$ exactly, that's non-convex or we could square it and penalize differences from $d^2$ which is convex in $x_i,x_j$ but has multiple minima including potentially $x_i=x_j$ and so on). In general, if the problem is not convex, ADMM still can be applied as a heuristic but no guarantee. Hanks et al. mostly considered convex (or convexified) cases. For exposition we lean on convex examples.
    \item \textbf{Apply ADMM (distributed):} Use the algorithm from Section 8 to solve. Translate it into pseudocode for agents: each agent iteratively updates its state and communicates with neighbors. Alternatively, derive the closed-form dynamics if possible.
    \item \textbf{Execute the strategy:} If this is an online control scenario, agents would continuously update their control inputs according to the solving algorithm and eventually reach coordination.
    \item \textbf{Validate constraints:} At convergence, verify if $\delta x \approx 0$ (all constraints satisfied within tolerance) and evaluate the cost.
\end{enumerate}

This structured approach helps ensure no constraint is forgotten and every multi-agent interaction is accounted for in the model.

\section{Main Contributions}
The sheaf-based coordination framework offers several notable contributions and advantages compared to classical graph-based methods:

\begin{enumerate}
    \item \textbf{Unified Modeling of Heterogeneous Constraints:} It provides a single formalism to model a wide range of multi-agent coordination problems. Classical approaches often treat consensus, formation, sensor fusion, etc., separately, each with its own custom algorithm. In contrast, by using cellular sheaves, all these problems become instances of finding consistent sections under certain constraints. This unification illuminates deep connections between problems (e.g., consensus and formation are both about aligning sections of a sheaf) and allows one to transfer insights and techniques across domains.
    \item \textbf{Rich Expressiveness:} Sheaves allow the modeling of vector-valued and structured data on networks. This means we can naturally handle \emph{asymmetric or directed interactions} (via non-symmetric restriction maps), multi-dimensional relationships (each edge can enforce a vector equation), and constraints that are not simply pairwise equality (they can be linear transformations or even nonlinear relations when coupled with potentials). This goes beyond what a weighted graph Laplacian can represent.
    \item \textbf{Topology-Aware Consistency:} By leveraging the machinery of cohomology (through $\delta$ and $H^0, H^1$), we gain access to tools like Hodge theory to reason about when coordination constraints have solutions and how “far” a given state is from any feasible solution (measured by cohomology classes). This is particularly useful in detecting and quantifying \emph{inconsistent loops or global obstructions} in the network constraints, something graph-theoretic approaches struggle to formalize. For example, if agents have cyclic dependencies that cannot all be satisfied (an unsolvable constraint loop), $H^1(G;\mathcal{F})\neq 0$ will reveal that.
    \item \textbf{Distributed Solver with Theoretical Guarantees:} The framework doesn’t just pose the problem but also provides a systematic way to solve it via ADMM and nonlinear sheaf Laplacian flows. Under convexity assumptions, we have convergence guarantees to global optima\footnote{In particular, ADMM for convex problems is known to converge under mild conditions. See Boyd et al., “Distributed Optimization and Statistical Learning via the Alternating Direction Method of Multipliers,” \textit{Foundations and Trends in Machine Learning} 3, no. 1 (2011): 1–122, for ADMM convergence theory. Hanks et al. extend this to the sheaf setting and prove convergence and optimality for the homological program solution.}. Even for certain nonconvex problems, the approach (as a heuristics) tends to perform well by exploiting problem structure. The distributed algorithm (Algorithm 1 in \cite{HanksEtAl}) clearly delineates the local computations and neighbor communications, facilitating implementation.
    \item \textbf{Link to Neural Network Perspective:} As we elaborate in Section 9, unrolling the iterations of the distributed solver yields a neural network-like architecture. This connection means we can potentially tune or learn parts of the coordination protocol (e.g., the edge potential parameters) using data, and conversely apply insights from deep learning (like initialization or normalization strategies) to improve coordination algorithms. It sets the stage for a synergy between rigorous optimization-based control and learning-based adaptation.
\end{enumerate}

These contributions make the sheaf-based approach a promising paradigm for complex multi-agent coordination going forward. It generalizes classical consensus and graph Laplacian methods (recovering them as special cases), and offers a pathway to integrate additional structure and objectives without losing the ability to analyze or solve the problem.

\section{General Multi-Agent Coordination Homological Program}
For clarity, we formally summarize the formulation of a general multi-agent coordination problem as a homological program:
\begin{equation*}
P^* \;=\; \underset{x \in C^0(G;\mathcal{F})}{\arg\min} \;\; \sum_{i\in V} f_i(x_i) \;\; \text{s.t.} \;\; \delta_{\mathcal{F}} x = 0~,
\end{equation*}
where 
\begin{itemize}
    \item $G=(V,E)$ is the agent communication graph;
    \item $\mathcal{F}$ is a cellular sheaf on $G$ capturing the data dimensions and consistency maps;
    \item $f_i(x_i)$ is the local cost for agent $i$ (convex, proper, lower semicontinuous);
    \item The constraint $\delta_{\mathcal{F}} x = 0$ means $x$ is a global section of the sheaf (all inter-agent consistency requirements are satisfied).
\end{itemize}
If needed, the constraint can be relaxed or penalized by introducing edge potentials $U_e$ as described earlier, yielding the unconstrained form:
\[
P = \underset{x \in C^0(G;\mathcal{F})}{\arg\min} \;\; \sum_{i} f_i(x_i) + \sum_{e} U_e\!\big((\delta_{\mathcal{F}} x)_e\big)~,
\]
which for sufficiently large penalty (or in the limit of hard constraints) recovers the constrained problem. This $P$ is the general multi-agent coordination homological program.

In practice, to use this framework one goes through the modeling steps exemplified in Section 5: identify the appropriate sheaf (data structure) and costs from the problem requirements, then set up $P$. Once $P$ is set up, we turn to solving it in a distributed manner.

\section{Solving Homological Programs with ADMM}\label{sec:admm}
We now outline the solution procedure using ADMM, following the approach of Hanks \emph{et al.} \cite{HanksEtAl}. The key idea is to split the problem across agents and edges so that each can be handled locally, and a simple iteration enforces agreement.

\subsection{Assumptions for Distributed Optimization}\label{sec:assumptions}
To ensure convergence of the ADMM-based solver, we assume:
\begin{enumerate}
    \item \textbf{Convexity:} Each $f_i: \mathcal{F}(i)\to \mathbb{R}\cup\{\infty\}$ is convex (and closed, proper), and each $U_e: \mathcal{F}(e)\to \mathbb{R}\cup\{\infty\}$ is convex. This makes the overall objective convex.
    \item \textbf{Strong Convexity of Potentials:} Each edge potential $U_e$ is strongly convex and differentiable, with a unique minimizer $b_e$ (interpreted as the “target” consistent value on that edge. Moreover, assume there is no fundamental inconsistency: the set of global sections achieving $b_e$ on each edge is non-empty (i.e. $b = \{b_e\}$ lies in $\mathrm{im}\,\delta_{\mathcal{F}}$).
    \item \textbf{Feasibility or Slater’s condition:} There exists at least one assignment $x$ such that $f_i(x_i)<\infty$ for all $i$ and $\delta x = 0$ (or in the penalized version, the constraints are soft so feasibility is trivial). Basically, the problem is well-posed.
\end{enumerate}
These conditions allow us to apply ADMM and guarantee it converges to the unique optimal solution of $P^*$.

\subsection{Consensus Form Reformulation}\label{sec:consensus-form}
We introduce local copies for the coupled variables to get an ADMM-friendly form. Let $z = \{z_e\}_{e\in E}$ be auxiliary variables intended to equal $\delta_{\mathcal{F}} x$. The constraint becomes $ \delta_{\mathcal{F}} x - z = 0$. We incorporate hard constraints with an indicator function $\chi_C(z)$ for $C:=\{z\,|\,z=0\}$, or if using $U_e$, it's already in the objective. For conceptual clarity, assume the hard form:
\[
\min_{x,z} \sum_{i} f_i(x_i) + \chi_C(z) \quad \text{s.t.}\quad \delta_{\mathcal{F}}x - z = 0~.
\]
This is equivalent to our original problem. The augmented Lagrangian (with scaled dual variable $y = \{y_e\}_{e\in E}$ where $y_e \in \mathcal{F}(e)$ for edge constraints) is:
\[
\mathcal{L}_\rho(x,z,y) = \sum_{i} f_i(x_i) + \chi_C(z) + \frac{\rho}{2}\|\,\delta_{\mathcal{F}} x - z + y\,\|^2_{2}~,
\] 
where $y$ is essentially the dual variable divided by $\rho$ (this is the typical "scaled" form of ADMM \cite{BoydADMM}). The ADMM updates are then:
\begin{align}
x^{k+1} &= \arg\min_{x}\; \sum_i f_i(x_i) + \frac{\rho}{2}\|\delta_{\mathcal{F}} x - z^k + y^k\|^2~, \label{update:x}\\
z^{k+1} &= \arg\min_{z}\; \chi_C(z) + \frac{\rho}{2}\|\delta_{\mathcal{F}} x^{k+1} - z + y^k\|^2~,\label{update:z}\\
y^{k+1} &= y^k + \delta_{\mathcal{F}} x^{k+1} - z^{k+1}~. \label{update:y}
\end{align}
Because of separability, \eqref{update:x} breaks into independent problems for each $x_i$ (each $f_i$ plus quadratic terms involving $x_i$ and its neighbors' data), and \eqref{update:z} is independent for each edge (since $\chi_C(z)$ decomposes over edges, enforcing each $z_e=0$). In fact, \eqref{update:z} yields simply 
\[
z^{k+1} = \Pi_C(\delta_{\mathcal{F}} x^{k+1} + y^k)~,
\] 
i.e. $z^{k+1} = \delta_{\mathcal{F}} x^{k+1} + y^k$ projected onto $C$ (which is zero, so $z^{k+1} = 0$ and effectively $\delta_{\mathcal{F}} x^{k+1} + y^k$ gets added to $y$). This simplifies the updates:
\begin{align*}
x^{k+1}_i &= \arg\min_{x_i} \; f_i(x_i) + \frac{\rho}{2}\sum_{e\ni i}\|\mathcal{F}_{i\to e}(x_i) - (\mathcal{F}_{j\to e}(x^k_j) - y^k_e)\|^2, \\
y^{k+1}_e &= y^k_e + (\mathcal{F}_{i\to e}(x^{k+1}_i) - \mathcal{F}_{j\to e}(x^{k+1}_j) )~,
\end{align*}
where $e=\{i,j\}$ and an orientation is assumed. We dropped $z$ entirely (since the projection simply enforces $z_e=0$ each time).

If we instead had the soft constraint version with $U_e$ in objective, the $z$-update would minimize $U_e(z_e) + \frac{\rho}{2}\|z_e - (\mathcal{F}_{i\to e}(x^{k+1}_i) - \mathcal{F}_{j\to e}(x^{k+1}_j) + y_e^k)\|^2$ for each edge $e$. That gives the optimality condition:
\[
\partial U_e(z^{k+1}_e) + \rho\big(z^{k+1}_e - [\mathcal{F}_{i\to e}(x^{k+1}_i)-\mathcal{F}_{j\to e}(x^{k+1}_j) + y^k_e]\big) \ni 0~.
\]
One can interpret this as computing a proximal operator of $U_e$. Solving this exactly may require a centralized step or iterative method, which is where the idea of using a \emph{diffusion process} to compute it arises (next subsection).

\subsection{Nonlinear Sheaf Diffusion for the Edge Update}\label{sec:diffusion-update}
To compute $z^{k+1}$ (or equivalently to enforce $\delta x^{k+1}$ satisfies the constraint) in a distributed way, one can simulate the continuous-time nonlinear heat flow:
\[
\dot{w}_e(t) = -\nabla U_e(w_e) - \rho\big(w_e - [\mathcal{F}_{i\to e}(x^{k+1}_i)-\mathcal{F}_{j\to e}(x^{k+1}_j) + y^k_e]\big)~,
\] 
for each edge $e$, which converges to $w_e^* = z^{k+1}_e$. This is essentially a gradient descent in $w$ for the objective in \eqref{update:z}. As Hanks et al. show, running this to convergence is equivalent to solving a \textbf{nonlinear Laplacian equation} on the network. In particular, assembling all these $w_e$ equations corresponds to the nonlinear sheaf Laplacian dynamics:
\[
\dot{x} = -L_{\nabla U}^{\mathcal{F}} x~,
\] 
which was given in \eqref{eq:nonlinear-laplacian-dyn} earlier. Intuitively, this diffusion allows neighboring agents to repeatedly adjust their tentative values to reach a consensus that minimizes the edge potentials. In implementation, one may not run it to full convergence every iteration; instead, interleaving one or a few diffusion steps between $x$-updates can be effective and still converge (this becomes a synchronous relaxation method).

The result is that the $z$-update (or equivalently enforcing consistency) can be done with purely local neighbor-to-neighbor interactions, without any centralized coordination. Theorem 8.1 in \cite{HanksEtAl} ensures that if the desired consistent edge values $b_e$ are attainable, this diffusion will find the projection of the current state onto the constraint manifold (global consistency.

\subsection{Summary of the Distributed Algorithm}
Putting the pieces together, a high-level description of the distributed ADMM algorithm for the homological program is:

\medskip
\noindent \textbf{Algorithm: Distributed Sheaf-ADMM}
\begin{enumerate}
    \item Initialize $x_i(0)$ for all agents (e.g., to some initial measurement or guess), and dual variables $y_e(0)=0$ for all edges.
    \item For $k = 0,1,2,\ldots$ (iterate until convergence):
    \begin{enumerate}
        \item \textbf{Local primal update:} Each agent $i$ receives the current dual variables $y_e^k$ from each incident edge $e=\{i,j\}$ (or equivalently receives $y_e$ from a designated orientation). Then agent $i$ updates its state:
        \[
        x_i^{k+1} := \arg\min_{x_i} f_i(x_i) + \frac{\rho}{2}\sum_{j \in N_i}\| \mathcal{F}_{i\to (i,j)}(x_i) - \mathcal{F}_{j\to (i,j)}(x^k_j) + y^k_{(i,j)} \|^2~,
        \] 
        which it can solve on its own (often a simple quadratic minimization or proximal operation).
        \item \textbf{Sheaf diffusion (consistency enforcement):} For each edge $e=\{i,j\}$, the two agents $i,j$ engage in a local exchange to adjust their states toward satisfying the constraint. This can be done by iterating:
        \[
        x_i \leftarrow x_i - \eta\, \mathcal{F}_{i\to e}^T \nabla U_e(\mathcal{F}_{i\to e}(x_i) - \mathcal{F}_{j\to e}(x_j)),
        \] 
        \[
        x_j \leftarrow x_j - \eta\, \mathcal{F}_{j\to e}^T \nabla U_e(\mathcal{F}_{i\to e}(x_i) - \mathcal{F}_{j\to e}(x_j)),
        \]
        for a few small steps $\eta>0$. In practice, $i$ and $j$ might repeatedly apply these updates using the latest information from each other until a certain tolerance or number of micro-iterations is reached. (This is a discrete implementation of the nonlinear heat equation on the edge.)
        \item \textbf{Dual update:} Each edge $e=\{i,j\}$ updates its dual variable:
        \[
        y_e^{k+1} := y_e^k + \mathcal{F}_{i\to e}(x_i^{k+1}) - \mathcal{F}_{j\to e}(x_j^{k+1})~,
        \] 
        which is the discrepancy after the $x$-update (the diffusion step is trying to minimize this, so ideally it is small). This $y_e$ is then sent to agents $i$ and $j$ to be used in the next iteration.
    \end{enumerate}
\end{enumerate}
\medskip

Each iteration involves agents computing updates using only neighbor information, and dual variables (which can be seen as “message variables”) being exchanged between neighbors. Convergence is determined, for instance, by the norm of primal residuals $\|\delta x^k\|$ and dual residuals $\|x^k - x^{k-1}\|$ falling below a threshold, as per standard ADMM criteria \cite{BoydADMM}.

\subsection{Convergence and Optimality}
Under the assumptions stated in Section \ref{sec:assumptions}, the algorithm converges to the global optimum $x^*$ of the convex program $P^*$. In the hard-constraint formulation, this means that eventually $\delta_{\mathcal{F}} x^* = 0$ (all constraints satisfied) and $x^*$ minimizes $\sum_i f_i(x_i)$ subject to that. In the soft formulation, it means $x^*$ minimizes the augmented objective with $U_e$ penalties (which often implies a good approximate satisfaction of constraints, or exact if penalties were enforced as barriers).

Hanks \emph{et al.} also discuss relaxing the strong convexity assumption: if $U_e$ are merely convex (not strongly), the solution may not be unique on the constraint manifold (there could be a family of equally optimal consistent states). In that case, ADMM can still converge, but the dual variables $y_e$ might not converge to a unique value. One remedy is adding a tiny regularization (strong convexification) to each $U_e$ to pick one solution. Alternatively, one can show that even without strong convexity, the primal variables $x$ still converge to \emph{a} solution (though possibly not uniquely determined by initial conditions if the problem has symmetry). This is a minor technical point — in most practical cases, constraints like consensus or formation have unique solutions except for trivial symmetries (like a common offset or rotation), which do not impede the functioning of the system.

By combining the power of convex optimization with sheaf-theoretic consistency constraints, we obtain a distributed algorithm that generalizes many known coordination protocols (for example, it reduces to the usual consensus algorithm when $\mathcal{F}$ is trivial and $f_i$ are quadratic). We next turn to interpreting these procedures through the lens of neural networks, which offers additional insight and potential for extension.

\section{From Sheaves and Laplacians to Neural Networks}
It is insightful to interpret the iterative distributed algorithm as a form of neural network that \emph{learns or converges} to a solution. Recently, researchers have introduced \textbf{Sheaf Neural Networks (SNNs)} as a generalization of Graph Neural Networks (GNNs) that operate on cellular sheaves rather than on plain graph. In this section, we draw parallels between our framework and SNNs, illuminating how multi-agent coordination can be seen as (or embedded into) a neural network architecture.

\subsection{Sheaf Neural Networks: Definition and Structure}
A Sheaf Neural Network layer, informally, takes an input assignment of feature vectors to the nodes of a graph (a 0-cochain with values in some feature sheaf) and produces an output assignment (a transformed 0-cochain) by a combination of:
\begin{itemize}
    \item A linear transformation on each node’s feature (analogous to a weight matrix in a neural layer, acting independently on each node).
    \item A propagation of information along edges that respects the sheaf structure. Typically, this involves applying restriction maps to node features, exchanging information across the edge, possibly applying some edge-specific weighting or transformation, and then aggregating back into node updates.
    \item A nonlinear activation function applied to the node features.
\end{itemize}
For example, an SNN as defined in \cite{Roddenberry2021SheafNN} uses the sheaf Laplacian in the following way: one can form a “sheaf convolution” operator $H = I - \epsilon L_{\mathcal{F}}$ for small $\epsilon$, which is analogous to the graph convolution $I - \epsilon L_G$ in GNNs. Stacking such operations with intermediate nonlinearities yields an SNN. 

In our context, consider the process of one ADMM iteration as a mapping $(x^k, y^k) \mapsto (x^{k+1}, y^{k+1})$. This mapping is reminiscent of a two-layer update:
\begin{itemize}
    \item The $x$-update can be seen as each node taking its current state $x_i^k$ and neighbor messages $y^k$ (which carry a summary of neighbor states from the previous iteration) and producing a new state $x_i^{k+1}$. If $f_i$ is quadratic, this is an affine combination of $x_i^k$ and neighbors’ contributions, possibly with a nonlinearity if $f_i$ had one (like a clipping if domain constrained). This is analogous to a neuron update: sum weighted inputs (neighbors’ states) and apply a (proximal) nonlinearity.
    \item The dual update can be seen as another layer where each edge (which could be regarded as a separate computational unit) receives the two endpoint states $x_i^{k+1}, x_j^{k+1}$ and outputs an updated message $y_e^{k+1}$. The formula $y_e^{k+1} = y_e^k + (\mathcal{F}_{i\to e}(x_i^{k+1}) - \mathcal{F}_{j\to e}(x_j^{k+1}))$ is essentially an accumulation of inconsistency (like an error signal). One could interpret $y_e$ as analogous to a hidden layer that stores “memory” about the edge consistency (like a recurrent connection).
\end{itemize}
If we unroll these iterations for a fixed number of steps $T$, we have a feedforward network of depth $T$, where the parameters of the network are determined by $f_i$, $U_e$, $\mathcal{F}_{i\to e}$, and $\rho$. Interestingly, these parameters are not learned in the coordination setting; they are fixed by the problem. But one could envision learning some of them (for instance, if $f_i$ had unknown weights that could be tuned to improve performance or adapt to environment).

Thus, the sheaf-based coordination algorithm itself \emph{is a kind of neural network} — specifically, a monotone operator network aimed at solving an optimization. This viewpoint is related to the concept of \textbf{algorithm unrolling} in machine learning, where one takes an iterative algorithm and treats a finite number of its iterations as a neural network layer sequence, possibly fine-tuning it using data.

\subsection{Node State Updates as Neuron Activations}
Examining \eqref{update:x}, the node update step for agent $i$:
\[
x_i^{k+1} = \mathrm{prox}_{f_i/\rho}\Big( \; x_i^k - \sum_{j\in N_i} \mathcal{F}_{i\to e}^T(y^k_e + \mathcal{F}_{i\to e}(x_i^k) - \mathcal{F}_{j\to e}(x_j^k)) \;\Big)~,
\] 
where $\mathrm{prox}$ denotes the proximal operator (which is identity minus gradient of $f_i$ if $f_i$ is differentiable). This equation closely resembles the update of a neuron that receives input $(x_i^k$ and neighbor terms$)$, performs a linear combination, and then applies a nonlinear function (the prox, which could be e.g. a truncation if $f_i$ encodes bounds, or a shrinkage if $f_i$ is an $\ell_1$ penalty, etc.). In the simplest case $f_i(x) = \frac{1}{2}\|x\|^2$, $\mathrm{prox}$ is just identity minus a linear term, and the update becomes 
$x_i^{k+1} = x_i^k - \alpha \sum_{j\in N_i}\mathcal{F}_{i\to e}^T(\mathcal{F}_{i\to e}(x_i^k) - \mathcal{F}_{j\to e}(x_j^k) + y^k_e)$, 
which is an affine function of the inputs (no nonlinearity). If $f_i$ enforces constraints like $x_i \ge 0$ (nonnegativity of some variables), then the prox includes a ReLU-like clip at 0. So the $x$-update layer can incorporate activation functions naturally depending on the form of $f_i$.

\subsection{Sheaf Diffusion as Message-Passing Layer}
The diffusion step (Section \ref{sec:diffusion-update}) can be viewed as a message-passing layer. In graph neural networks, a typical layer update is:
\[
h_i^{new} = \sigma\Big( W \cdot h_i^{old} + \sum_{j\in N_i} W'\cdot h_j^{old} \Big)
\] 
for some weight matrices $W, W'$ and nonlinearity $\sigma$. In our case, during diffusion each agent’s state is adjusted by something coming from each neighbor: $-\eta \mathcal{F}_{i\to e}^T \nabla U_e(\mathcal{F}_{i\to e}(x_i) - \mathcal{F}_{j\to e}(x_j))$. This is a message from neighbor $j$ to $i$, which depends on $x_i$, $x_j$, and the function $\nabla U_e$. In an SNN, one could have a learnable function or attention mechanism on each edge; here $\nabla U_e$ plays that role, determining how much influence the neighbor’s difference has. For example, if we set $U_e(y) = \frac{1}{2}\|y\|^2$, then $\nabla U_e(y) = y$ and the message is just $-(\mathcal{F}_{i\to e}^T \mathcal{F}_{i\to e}(x_i) - \mathcal{F}_{i\to e}^T \mathcal{F}_{j\to e}(x_j))$. If $U_e$ is something like a saturated loss that grows slowly after a point, $\nabla U_e$ might reduce large differences’ effect (similar to clipping gradients, which is analogous to robust message passing).

The diffusion is essentially a message-passing layer enforcing consistency, and it is naturally incorporated in the ADMM iteration. In a purely feed-forward SNN, one might include a similar diffusion or use the sheaf Laplacian in each layer to mix features.

\subsection{Role of Projection (Consensus) and Dual Variables}
The Euclidean projection $\Pi_C$ in the ADMM algorithm (which we implemented via diffusion) has an analogue in neural networks as well: it’s like a normalization or constraint satisfaction step between layers. Some neural network architectures include normalization layers (like batch norm or layer norm) to enforce certain conditions (zero mean, unit variance) at each layer. Here, $\Pi_C$ ensures that after each full iteration, the “intermediate” $z$ is in the feasible set of constraints. We can think of it as a layer that enforces the sheaf constraint approximately at every step. If one unrolled infinitely many layers, the final output would be in the constraint set exactly. With a finite number, it’s approximately satisfying constraints (much like how a sufficiently deep GNN can approximate broad functions, but a shallow one might not exactly solve a complex constraint, though it gets closer with each layer).

The dual variables $y_e$ can be seen as carrying the memory of past inconsistencies, which improves convergence (like momentum in optimization). In a network sense, $y_e$ are additional hidden features on edges accumulating information across layers. One might imagine an architecture where edges have hidden units that interact with node units. Indeed, some recent GNN models introduce edge features that evolve alongside node features. Here, $y_e$ is exactly that: an edge feature updated as $y_e^{k+1} = y_e^k +$ (neighbor difference). This resembles a recurrent connection (like an LSTM gating, albeit simpler linear accumulation) ensuring that if a constraint hasn’t been satisfied yet, the “pressure” to satisfy it builds up (encoded in $y_e$).

\subsection{Optimization View vs. Learning View}
A striking difference between our ADMM-unrolled network and a typical learned neural network is that our network’s “weights” and non-linearities are derived from an optimization problem, not trained from data. We are effectively doing \emph{predictive coding} where the “prediction” is the consistent optimal state and the “error” is driven to zero via iterative feedback (dual updates). On the flip side, if we had multiple coordination tasks or unknown parameters in $f_i, U_e$, we could imagine learning those parameters by gradient descent through this network. For instance, if $U_e(y)$ had a parameter that we want to tune so that the system converges faster, we could treat convergence speed or error as a loss and backpropagate through the unrolled iterations to adjust that parameter.

This cross-pollination of ideas suggests a future direction where control strategies for multi-agent systems are designed with neural network principles (modularity, trainable components) and conversely, neural networks for graph-structured data incorporate principled constraints via sheaf Laplacians (ensuring learned representations respect certain invariances or physical laws).

The correspondence is:
\begin{center}
\begin{tabular}{|c|c|}
\hline 
\textbf{Coordination Algorithm} & \textbf{Neural Network Analogy} \\
\hline
Agent states $x_i^k$ & Neuron activations/feature vectors \\
Dual variables $y_e^k$ & Edge hidden units / error signals \\
Local objective $f_i$ & Neuron activation function (via prox) \\
Edge potential $U_e$ & Message function / attention along edge \\
Iteration steps & Network layers (unrolled in time) \\
Converged $x^*$ & Network output after many layers \\
\hline
\end{tabular}
\end{center}

Understanding these analogies can inspire hybrid methods: one could use a few iterations of the theoretical ADMM (ensuring feasibility and decent optimum) and then use a neural network to fine-tune to real-world data or uncertainties, or use a neural network to warm-start ADMM. Or simply leverage hardware and software from deep learning to implement distributed solvers more efficiently.

\section{Advantages of Sheaf Neural Networks for Learning and Coordination}
Considering the above connections, we list some specific advantages that using sheaf neural network concepts (and, by extension, our sheaf coordination framework) brings:

\subsection{Modeling Asymmetric and Heterogeneous Interactions}
Standard GNNs often assume homogeneous scalar weights on edges (or at best a fixed vector of weights for a given edge type). Sheaf Neural Networks, however, inherently handle \textbf{asymmetric interactions}: since each edge has distinct restriction maps for each endpoint, the influence of node $i$ on node $j$ can differ from $j$ on $i$. This is crucial for capturing \textit{heterophily} in networks (cases where connected nodes may have very different states rather than similar). Empirical studies have shown SNNs excel in graphs with directed edges or where the relationship is not “mirror-symmetric". In multi-agent coordination, this means we can rigorously model situations like one agent’s output being another’s input (a directed dependence) or interactions that are not identical in both directions (like one agent might enforce a constraint on another but not vice versa). The use of sheaves thus broadens the class of systems and communication patterns that can be handled seamlessly.

\subsection{Nonlinear Diffusion and Improved Expressivity}
By incorporating nonlinear potentials $U_e$, the message-passing in the network becomes nonlinear in the differences between agents’ states. This \textbf{nonlinear diffusion} mechanism allows the network to implement more complex behaviors than a linear Laplacian does. For example, it can learn to ignore small differences (below some threshold) and only act strongly on large disagreements, or vice versa, akin to a robust or adaptive diffusion. From a learning perspective, this gives SNNs a richer function class to approximate. Recent work titled “Sheaf Diffusion Goes Nonlinear” demonstrates that introducing nonlinearity in the sheaf propagation can significantly enhance performance of GNNs on certain tasks (e.g., learning functions on graphs that normal GNNs struggle with due to oversmoothing or heterophily. In coordination, a nonlinear diffusion means the system can handle constraints that only need to kick in beyond certain thresholds (like collision avoidance might be negligible when far apart but strongly repulsive when too close, which is naturally modeled by a nonlinear potential).

\subsection{Decentralized Learning and Adaptation}
A sheaf neural network can be executed in a decentralized manner: each layer’s computation uses only local neighbor information. This means, in principle, an SNN can run on the multi-agent system itself. Agents could run a neural network that was trained (possibly offline, or even online in a continual learning way) to achieve coordination under certain conditions (including learned behaviors or heuristic improvements). This is different from standard central training of controllers. In essence, \textbf{decentralized learning} is enabled by the locality of SNN layers. Moreover, one could personalize the behavior for each agent by having agent-specific functions $f_i$ or features, which the framework naturally accommodates (each node can have its own parameters in the neural net or its own part of the objective in control terms). 

An exciting possibility is to use reinforcement learning or gradient-based learning to adjust the local objective functions $f_i$ or potentials $U_e$ based on performance feedback, all while maintaining the sheaf constraint structure. Because the sheaf formalism ensures consistency, any learned policy inherently respects the critical constraints (no need to relearn physical laws or communication protocols – they are “baked in” to the architecture). This makes safe learning more feasible: the network will not easily produce a grossly inconsistent action because the architecture itself penalizes that heavily.

The marriage of sheaf-based optimization with neural network methodology offers:
\begin{itemize}
    \item \emph{Greater modeling power} (through asymmetric, high-dimensional relations).
    \item \emph{Enhanced algorithmic performance} (through nonlinear message functions and the ability to incorporate learned elements).
    \item \emph{Maintained decentralization and interpretability}, since the core is still an optimization with a known objective, and convergence is analyzable.
\end{itemize}

This is a rapidly evolving area of research, and initial studies are promising in showing that SNNs can outperform classical GNNs on certain graph prediction tasks by better handling structured relation. Likewise, in multi-agent experiments, one can expect that incorporating sheaf-based design leads to more robust and flexible coordination behaviors than purely linear approaches.

\section{Conclusion and Future Directions}
In this paper, we presented an expository overview of cellular sheaves and their associated Laplacians, and showed how they provide a powerful framework for multi-agent coordination. We started from basic definitions (sheaves on graphs, sections, and Laplacians) and built up to advanced applications (nonlinear homological programs and distributed ADMM solvers). Throughout, we highlighted the intuition behind the mathematics: sheaves generalize graphs by attaching vector spaces and constraints, and sheaf Laplacians generalize graph Laplacians to enforce these constraints in a dynamical or optimization context.

We demonstrated how classical coordination tasks like consensus and formation control can be formulated in this framework, and how a unified distributed algorithm can solve all of them by exploiting convexity and locality. A significant portion of our discussion was devoted to drawing connections between this algorithm and neural network concepts, which serve to deepen understanding and pave the way for new hybrids of learning and control.

Several key takeaways are:
\begin{itemize}
    \item Cellular sheaves allow encoding of complex agent relationships, overcoming limitations of simple graph models (especially for heterogeneous systems).
    \item The nonlinear sheaf Laplacian provides a principled way to design distributed controllers that achieve global objectives while respecting local constraints, bridging spectral graph theory with nonlinear control.
    \item ADMM on sheaf-structured problems yields a message-passing algorithm that is naturally decentralized and comes with convergence guarantees under convexity assumptions.
    \item Viewing the iterative solver as a neural network unveils opportunities to integrate learning, enabling systems that can potentially adapt to unknown conditions by tuning their local cost functions or interaction potentials.
\end{itemize}

\textbf{Future Directions:} This area is ripe with avenues for further research. A few prominent ones include:
\begin{enumerate}
    \item \emph{Extensions to Time-Varying and Stochastic Environments:} Real multi-agent systems operate in changing conditions. One can extend the sheaf framework to time-evolving sheaves (where the graph or restriction maps change over time) or uncertain data (where measurements are noisy, suggesting a need for stochastic potentials or Bayesian interpretations).
    \item \emph{Higher-Dimensional Sheaves:} We mainly discussed sheaves on graphs (1-dimensional cell complexes). The theory extends to sheaves on higher-dimensional cell complexes (hypergraphs or simplicial complexes). This could model multi-agent interactions involving more than two agents at once (e.g., a constraint among a trio of agents). The concept of a sheaf Laplacian also extends (leading to higher Hodge Laplacians). Distributed coordination on such complexes (perhaps using tools from higher-order network theory) is an exciting frontier.
    \item \emph{Learning Restriction Maps from Data:} In some cases, one might not know the exact linear relationship between agent states that should hold; one could try to learn the linear maps $\mathcal{F}_{i\to e}$ from demonstrations of coordinated behavior (as was explored in work on learning sheaf Laplacians from signals on graphs by Hansen \& Ghrist). This would reverse-engineer the constraints that a group of agents is implicitly following.
    \item \emph{Hardware Implementation and Scalability:} As systems grow large (e.g., swarms of hundreds of drones), implementing the sheaf computations efficiently is important. Graph GPU libraries and neural network hardware could be leveraged given the similarities we've noted. Investigating the scalability and real-time performance of these algorithms in large networks is crucial for practical adoption.
    \item \emph{Integration with Control Theory:} Finally, connecting this more with continuous control: for instance, using the sheaf Laplacian flow as a controller in continuous time, or analyzing stability in the presence of communication delays and quantization. The rich structure may allow more precise statements about robustness (since $\ker L_{\mathcal{F}}$ characterizes the steady-state agreement exactly, and one can possibly quantify how perturbations move the system off that manifold).
\end{enumerate}

We believe that the cellular sheaf perspective will become increasingly important as networks and systems become more complex and require integrated approaches from topology, algebra, optimization, and learning. By being both a comprehensive tutorial and a pointer to recent advances (such as those by Hanks et al. and Riess), we hope this paper serves as a stepping stone for researchers and practitioners to explore and apply sheaf-based methods in their own domains.

\end{document}